
\documentclass[10pt,twoside]{siamltex}
\usepackage{amsfonts}
\usepackage{mathrsfs}
\usepackage{amsfonts,epsfig}

\setlength{\textheight}{190mm}
\setlength{\textwidth}{130mm}
\topmargin = 20mm


\setlength{\parskip}{.1in}


\newtheorem{remark}[theorem]{Remark}

\newtheorem{example}[theorem]{Example}


\newcommand{\complex}{\mathbb{C}}


\begin{document}



\bibliographystyle{plain}
\title{Convergence on Gauss-Seidel iterative methods for linear systems with general $H-$matrices\thanks{Corresponding author: Cheng-yi Zhang, Email: chyzhang08@126.com}}

\author{
Cheng-yi Zhang\thanks{Department of Mathematics and Mechanics of
School of Science, Xi'an Polytechnic University, Xi'an, Shaanxi,
710048, P.R. China. Supported by the Science Foundation of the
Education Department of Shaanxi Province of China\ (13JK0593), the
Scientific Research Foundation\ (BS1014) and the Education Reform
Foundation\ (2012JG40) of Xi'an Polytechnic University, and National
Natural Science Foundations of China (Grant No. 11201362 and
11271297).}\and Dan Ye\thanks{School of Science, Xi'an Polytechnic
University, Xi'an, Shaanxi, 710048, P.R. China.}\and Cong-lei
Zhong\thanks {School of Mathematics and Statistics, Henan University
of Science and Technology, Luoyang, Henan, 417003, P.R. China.}\and Shuanghua Luo\thanks{Department of Mathematics and Mechanics of
School of Science, Xi'an Polytechnic University, Xi'an, Shaanxi,
710048, P.R. China. Supported by the Science Foundation of the
Education Department of Shaanxi Province of China\ (14JK1305).} }

\pagestyle{myheadings} \markboth{Cheng-yi Zhang, Dan Ye, Cong-lei
Zhong and Shuanghua Luo}{Convergence on Gauss-Seidel iterative methods for linear
systems with general $H-$matrices} \maketitle
\begin{abstract}
It is well known that as a famous type of iterative methods in numerical linear algebra, Gauss-Seidel iterative methods are convergent for linear
systems with strictly or irreducibly diagonally dominant matrices, invertible
$H-$matrices (generalized strictly diagonally dominant matrices) and Hermitian positive definite matrices. But, the same is not necessarily true for linear
systems with nonstrictly diagonally dominant matrices and general
$H-$matrices. This paper firstly proposes some necessary and sufficient conditions for convergence on
Gauss-Seidel iterative methods to establish several new theoretical results on linear systems with nonstrictly diagonally dominant
matrices and general $H-$matrices. Then, the convergence results on
preconditioned Gauss-Seidel (PGS) iterative methods for general
$H-$matrices are presented. Finally, some numerical examples are
given to demonstrate the results obtained in this paper.
\end{abstract}
\begin{keywords}
Gauss-Seidel iterative methods; Convergence; Nonstrictly diagonally dominant matrices; General $H-$matrices.
\end{keywords}
\begin{AMS}
15A15, 15F10.
\end{AMS}
\section{Introduction} \label{intro-sec}
In this paper we consider the solution methods for the system of $n$
linear equations
\begin{equation}\label{r1}
Ax=b, \end{equation} where $A=(a_{ij})\in \mathbb{C}^{n\times n}$
and is nonsingular, $b,x\in \mathbb{C}^{n}$ and $x$ unknown. Let us
recall the standard decomposition of the coefficient matrix $A\in
\mathbb{C}^{n \times n}$,
\begin{equation}\label{r1'}A=D_A-L_A-U_A,
\end{equation}
where $D_A=diag(a_{11}, a_{22},\cdots,a_{nn})$ is a diagonal matrix,
$L_A$ and $U_A$ are strictly lower and strictly upper triangular
matrices, respectively. If $a_{ii}\neq 0$ for all $i\in \langle
n\rangle=\{1,2,\cdots,n\}$, the Jacobi iteration matrix associated
with the coefficient matrix $A$ is
\begin{equation}\label{1i}
H_J=D_A^{-1}(L_A+U_A);
\end{equation}
the forward, backward and symmetric Gauss-Seidel (FGS-, BGS- and
SGS-) iteration matrices associated with the coefficient matrix $A$
are
\begin{equation}\label{1j}
H_{FGS}=(D_A-L_A)^{-1}U_A,
\end{equation}
\begin{equation}\label{1j'}
H_{BGS}=(D_A-U_A)^{-1}L_A,
\end{equation}
and
\begin{equation}\label{1jj}
\begin{array}{lll}
H_{SGS}&=&H_{BGS}H_{FGS}\\
&=&(D_A-U_A)^{-1}L_A(D_A-U_A)^{-1}L_A,
\end{array}
\end{equation}
respectively. Then, the Jacobi, FGS, BGS and SGS iterative method
can be denoted the following iterative scheme:
\begin{equation}\label{r5}
x^{(i+1)}=Hx^{(i)}+f,\ \ \ \ i=0,1,2,\cdots\cdots
\end{equation}
where $H$ denotes iteration matrices $H_J,\ H_{FGS},\ H_{BGS}$ and
$H_{SGS}$, respectively, correspondingly, $f$ is equal to
$D_A^{-1}b,\ (D_A-L_A)^{-1}b,\ (D_A-U_A)^{-1}b$ and
$(D_A-U_A)^{-1}D_A(D_A-L_A)^{-1}b$, respectively. It is well-known
that (\ref{r5}) converges for any given $x^{(0)}$ if and only if
$\rho(H)<1$ (see \cite{R.S.15}), where $\rho(H)$ denotes the
spectral radius of the iteration matrix $H$. Thus, to establish the
convergence results of iterative scheme (\ref{r5}), we mainly study
the spectral radius of the iteration matrix in the iterative scheme
(\ref{r5}).

As is well known in some classical textbooks and monographs, see
\cite{R.S.15}, Jacobli and Guass-Seidel iterative methods for linear
systems with Hermitian positive definite matrices, strictly or
irreducibly diagonally dominant matrices and invertible
$H-$matrices(generalized strictly diagonally dominant matrices) are
convergent. Recently, the class of strictly or irreducibly
diagonally dominant matrices and invertible $H-$matrices has been
extended to encompass a wider set, known as the set of {\em general
$H-$matrices}. In a recent paper, Ref.
\cite{bru2008,bru2009,bru2012}, a partition of the $n\times n$
general $H-$matrix set, $H_n$, into three mutually exclusive classes
was obtained: the Invertible class, $H_n^I$, where the comparison
matrices of all general $H-$matrices are nonsingular, the Singular
class, $H_n^S$, formed only by singular $H-$matrices, and the Mixed
class, $H_n^M$, in which singular and nonsingular $H-$matrices
coexist. Lately, Zhang in \cite{cyzhang01} proposed some necessary
and sufficient conditions for convergence on Jacobli iterative
methods for linear systems with general $H-$matrices.

A problem has to be proposed, i.e., whether Guass-Seidel iterative
methods for linear systems with nonstrictly diagonally dominant
matrices and general $H-$matrices are convergent or not. Let us
investigate the following examples.

\begin{example}\label{ex1} Assume that either $A$ or $B$ is the coefficient matrix of linear system
(\ref{r1}), where  $A=\left[
 \begin{array}{ccccc}
 \ 2 & \ 1 & 1 \\
 \ -1 & \ 2 & 1 \\
 \ -1 & \ -1 & 2
 \end{array}
 \right]$ and $B=\left[
 \begin{array}{ccccc}
 \ 2 & \ -1 & -1 \\
 \ 1 & \ 2 & -1 \\
 \ 1 & \ 1 & 2
 \end{array}
 \right]$. It is verified that both $A$ and $B$ are nonstrictly diagonally dominant
  and nonsingular. Direct computations yield that
 $\rho(H^A_{FGS})=\rho(H^B_{BGS})=1$, while
 $\rho(H^A_{BGS})=\rho(H^B_{FGS})=0.3536<1$ and
 $\rho(H^A_{SGS})=\rho(H^B_{SGS})=0.5797<1$. This shows that BGS and
 SGS iterative methods for the matrix $A$ are convergent, while the same is not FGS iterative method
 for $A$; However, FGS and
 SGS iterative methods for the matrix $B$ are convergent, while the same is not BGS iterative method
 for $B$.
\end{example}

\begin{example}\label{ex2} Assume that either $A$ or $B$ is the coefficient matrix of linear system
(\ref{r1}), where $A=\left[
 \begin{array}{ccccc}
 \ 2 & \ -1 & 1 \\
 \ 1 & \ 2 & 1 \\
 \ 1 & \ 1 & 2
 \end{array}
 \right]$ and $B=\left[
 \begin{array}{ccccc}
 \ 2 & \ -1  \\
 \ 2 & \ 1
 \end{array}
 \right]$. It is verified that $A$ is nonstrictly diagonally dominant matrix and and $B$ is a mixed $H-$matrix. Further, they are nonsingular.
 By direct computations, it is easy to get that  $\rho(H^A_{FGS})=0.4215<1,\ \rho(H^A_{BGS})=0.3536<1$
 and $\rho(H^A_{SGS})=0.3608<1$, while
 $\rho(H^B_{FGS})=\rho(H^B_{BGS})=\rho(H^B_{SGS})=1$. This shows
 that FGS, BGS and
 SGS iterative methods converge for the matrix $A$, while they
 fail to converge for the matrix $B$.
\end{example}

In fact, the matrices $A$ and $B$ in Example \ref{ex1} and Example
\ref{ex2}, respectively, are all general $H-$matrices, but are not
invertible $H-$matrices. Guass-Seidel iterative methods for these
matrices sometime may converge for some given general $H-$matrices,
but may fail to converge for other given general $H-$matrices. How
do we get the convergence on Guass-Seidel iterative methods for
linear systems with this class of matrices without direct
computations?

Aim at the problem above, some necessary and sufficient conditions
for convergence on Gauss-Seidel iterative methods are firstly
proposed to establish some new results on nonstrictly diagonally
dominant matrices and general $H-$matrices. In particular, the
convergence results on preconditioned Gauss-Seidel (PGS) iterative
methods for general $H-$matrices are presented. Futhermore, some
numerical examples are given to demonstrate the results obtained in
this paper.

The paper is organized as follows. Some notations and preliminary
results about special matrices are given in Section 2. Some special
matrices will be defined, based on which some necessary and
sufficient conditions for convergence on Gauss-Seidel iterative
methods are firstly proposed in Section 3. Some convergence results
on preconditioned Gauss-Seidel iterative methods for general
$H-$matrices are then presented in Section 4. In Section 5, some
numerical examples are given to demonstrate the results obtained in
this paper. Conclusions are given in Section 6.
\section{Preliminaries} \label{prelimi-sec}
In this section we give some notions and preliminary results about
special matrices that are used in this paper.

$\complex^{m\times n}\ (\mathbb{R}^{m\times n})$ will be used to
denote the set of all $m\times n$ complex (real) matrices.
$\mathbb{Z}$ denotes the set of all integers. Let $\alpha\subseteq
\langle n\rangle=\{1,2,\cdots,n\}\subset\mathbb{Z}$. For nonempty
index sets $\alpha,\beta \subseteq \langle n\rangle$,
$A(\alpha,\beta)$ is the submatrix of $A\in \complex^{n\times n}$
with row indices in $\alpha$ and column indices in $\beta$. The
submatrix $A(\alpha,\alpha)$ is abbreviated to $A(\alpha)$. {\em Let
$A\in \complex^{n\times n}$, $\alpha \subset \langle n\rangle$ and
$\alpha^\prime=\langle n\rangle-\alpha$. If $A(\alpha)$ is
nonsingular, the matrix
\begin{equation}\label{2.1}
A/\alpha=A(\alpha^\prime)-A(\alpha^\prime,\alpha)[A(\alpha)]^{-1}A(\alpha,\alpha^\prime)
\end{equation}
is called the Schur complement with respect to $A(\alpha)$, indices
in both $\alpha$ and $\alpha^\prime$ are arranged with increasing
order.} We shall confine ourselves to the nonsingular $A(\alpha)$ as
far as $A/\alpha$ is concerned.

Let $A=(a_{ij})\in \mathbb{C}^{m\times n}$ and $B=(b_{ij})\in
\mathbb{C}^{m\times n}$, $A\otimes B=(a_{ij}b_{ij})\in
\mathbb{C}^{m\times n}$ denotes {\em the Hadamard product} of the
matrices $A$ and $B$. A matrix $A=(a_{ij})\in \mathbb{R}^{n\times
n}$ is called {\em nonnegative} if $a_{ij}\geq 0$ for all $i,j\in
\langle n\rangle$. {\em A matrix $A=(a_{ij})\in \mathbb{R}^{n\times
n}$ is called a $Z-$matrix if $a_{ij}\leq 0$ for all $i\not= j$.} We
will use $Z_n$ to denote the set of all $n\times n$ $Z-$matrices.
{\em A matrix $A=(a_{ij})\in Z_n$ is called an $M-$matrix if $A$ can
be expressed in the form $A=sI-B$, where $B\geq 0$, and $s\geq
\rho(B)$, the spectral radius of $B$. If $s>\rho(B)$, $A$ is called
a nonsingular $M-$matrix; if $s=\rho(B)$, $A$ is called a singular
$M-$matrix.} $M_n$, $M^{\bullet}_n$ and $M^0_n$ will be used to
denote the set of all $n\times n$ $M-$matrices, the set of all
$n\times n$ nonsingular $M-$matrices and the set of all $n\times n$
singular $M-$matrices, respectively. It is easy to see that
\begin{equation}\label{2.0}
M=M^{\bullet}_n\cup M^{0}_n\ \ \ and  \ \ \ M^{\bullet}_n\cap
M^{0}_n=\emptyset.
\end{equation}
~~~~~{\em The comparison matrix of a given matrix $A=(a_{ij})\in
\complex^{n\times n}$, denoted by $\mu(A)=(\mu_{ij})$, is defined by
$$
\mu_{ij}= \left\{
\begin{array}{cc}
|a_{ii}|,\ \ & \ \ {\rm if}\ \ i=j,\\
-|a_{ij}|, \ \ & \ \ {\rm if} \ \ i\neq j.
\end{array}
\right.$$} It is clear that $\mu(A)\in Z_n$ for a matrix $A\in
\complex^{n\times n}$. The set of {\em equimodular matrices}
associated with $A$, denoted by $\omega(A)=\{B\in\complex^{n\times
n}:\ \mu(B)=\mu(A)\}$. Note that both $A$ and $\mu(A)$ are in
$\omega(A)$. {\em A matrix $A=a_{ij}\in \complex^{n\times n}$ is
called a general $H-$matrix if $\mu(A)\in M_n$ (see \cite{AB2}). If
$\mu(A)\in M^{\bullet}_n$, $A$ is called an invertible $H-$matrix;
if $\mu(A)\in M^{0}_n$ with $a_{ii}=0$ for at least one $i\in
\langle n\rangle$, $A$ is called a singular $H-$matrix; if
$\mu(A)\in M^{0}_n$ with $a_{ii}\neq0$ for all $i\in \langle
n\rangle$, $A$ is called a mixed $H-$matrix.} $H_n$, $H^{I}_n$,
$H^{S}_n$ and $H_n^M$ will denote the set of all $n\times n$ general
$H-$matrices, the set of all $n\times n$ invertible $H-$matrices,
the set of all $n\times n$ singular $H-$matrices and the set of all
$n\times n$ mixed $H-$matrices, respectively (See \cite{bru2008}).
Similar to equalities (\ref{2.0}), we have
\begin{equation}\label{2.01}
H_n=H^{I}_n\cup H^{S}_n\cup H^{M}_n\ \ \ and  \ \ \ H^{I}_n\cap
H^{S}_n\cap H^{M}_n=\emptyset.
\end{equation}

{\em For $n\geq 2$, an $n\times n$ complex matrix $A$ is reducible
if there exists an $n\times n$ permutation matrix $P$ such that
\begin{equation}\label{matri22x1}
PAP^{T} = \left[
\begin{array}{ccc}
\ A_{11} & \ A_{12}\\
\ 0 & \ A_{22}
\end{array}
\right],
\end{equation}
where $A_{11}$ is an $r\times r$ submatrix and $A_{22}$ is an
$(n-r)\times (n-r)$ submatrix, where $1 \leq r<n$. If no such
permutation matrix exists, then $A$ is called irreducible. If $A$ is
a $1\times1$ complex matrix, then $A$ is irreducible if its single
entry is nonzero, and reducible otherwise.}
\begin{definition}\label{d1}
{ A matrix $A\in \complex^{n\times n}$ is called diagonally dominant
by row if
\begin{equation}\label{2}
   |a_{ii}|\geq \sum\limits_{j=1,j\neq
i}^{n}|a_{ij}|
\end{equation}
holds for all $i\in \langle n\rangle$. If inequality in (\ref{2})
holds strictly for all $i\in \langle n\rangle,$ $A$ is called
strictly diagonally dominant by row. If $A$ is irreducible and the
inequality in (\ref{2}) holds strictly for at least one $i\in
\langle n\rangle$, $A$ is called irreducibly diagonally dominant by
row. If (\ref{2}) holds with equality for all $i\in \langle
n\rangle,$ $A$ is called diagonally equipotent by row.}
\end{definition}

$D_n(SD_n,\ ID_n)$ and $DE_n$ will be used to denote the sets of all
$ n\times n$ (strictly, irreducibly) diagonally dominant matrices
and the set of all $n\times n$ diagonally equipotent matrices,
respectively.

\begin{definition}\label{d2}
{ A matrix $A\in \complex^{n\times n}$ is called generalized
diagonally dominant if there exist positive constants $\alpha_i,\ \
i\in\langle n\rangle,$ such that
 \begin{equation}\label{av}
 \alpha_i |a_{ii}| \geq \sum\limits_{j=1,j\neq i}^{n}\alpha_j|a_{ij}|
 \end{equation}
holds for all $i\in \langle n\rangle$. If inequality in (\ref{av})
holds strictly for all $i\in \langle n\rangle,$ $A$ is called
generalized strictly diagonally dominant. If (\ref{av}) holds with
equality for all $i\in \langle n\rangle,$  $A$ is called generalized
diagonally equipotent.}
\end{definition}

We will denote the sets of all $n\times n$ generalized (strictly)
diagonally dominant matrices and the set of all $n\times n$
generalized diagonally equipotent matrices by $GD_n(GSD_n)$ and
$GDE_n$, respectively.

\begin{definition}
{ A matrix $A$ is called nonstrictly diagonally dominant, if either
(\ref{2}) or (\ref{av}) holds with equality for at least one $i\in
\langle n\rangle$.}
\end{definition}

\begin{remark}\label{remark1}
{Let $A=(a_{ij})\in \complex^{n\times n}$ be nonstrictly diagonally
dominant and $\alpha=\langle n\rangle-\alpha'\subset \langle
n\rangle$. If $A(\alpha)$ is a (generalized) diagonally equipotent
principal submatrix of $A$, then the following hold:
\begin{itemize}
\item
$A(\alpha,\alpha')=0$, which shows that $A$ is reducible;
\item
$A(i_1)=(a_{i_1i_1})$ being (generalized) diagonally equipotent
implies $a_{i_1i_1}=0$.
\end{itemize}
}
\end{remark}

\begin{remark}\label{re2.5}
{  Definition 2.2 and Definition 2.3 show that
$$D_n\subset GD_n\ \ and\ \ GSD_n\subset GD_n.$$
}
\end{remark}
~~~The following will introduce the relationship of (generalized)
diagonally dominant matrices and general $H-$matrices and some
properties of general $H-$matrices that will be used in the rest of
the paper.

\begin{lemma}\label{l9} {\rm (see \cite{Z.C17,Z.X19,Z.X22,zhang26})}
Let $A\in D_n(GD_n)$. Then $A\in H^{I}_n$ if and only if $A$ has no
(generalized) diagonally equipotent principal submatrices.
Furthermore, if $A\in D_n\cap Z_n(GD_n\cap Z_n)$, then $A\in
M^{\bullet}_n$ if and only if $A$ has no (generalized) diagonally
equipotent principal submatrices.
\end{lemma}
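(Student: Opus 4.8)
The plan is to prove the $M$-matrix characterization first and then transfer it to invertible $H$-matrices through the comparison matrix, treating the ordinary and generalized versions in parallel. The transfer is immediate once one notes that diagonal dominance, diagonal equipotence and the position of (generalized) equipotent principal submatrices depend only on the moduli $|a_{ij}|$: thus $A\in D_n\ (GD_n)$ iff $\mu(A)\in D_n\cap Z_n\ (GD_n\cap Z_n)$, while $A\in H^I_n$ iff $\mu(A)\in M^\bullet_n$ by the very definition of an invertible $H$-matrix, and $A$ has a (generalized) diagonally equipotent principal submatrix exactly when $\mu(A)$ does, since $\mu(A(\beta))=\mu(A)(\beta)$. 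Hence the two $H$-matrix assertions are nothing but the two $M$-matrix assertions applied to $B=\mu(A)$, and it suffices to prove: for $B\in D_n\cap Z_n\ (GD_n\cap Z_n)$, $B\in M^\bullet_n$ iff $B$ has no (generalized) diagonally equipotent principal submatrix. Note also that such a $B$ satisfies $B=\mu(B)$, and, since the $1\times1$ case of the no-equipotence hypothesis forces $b_{ii}\neq0$ by Remark~\ref{remark1}, $B$ has positive diagonal.

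For the forward direction I would argue by contraposition. If $B(\beta)$ is a (generalized) diagonally equipotent principal submatrix, then $B$ is nonstrictly diagonally dominant, so Remark~\ref{remark1} gives $B(\beta,\beta')=0$ and therefore $\det B=\det B(\beta)\cdot\det B(\beta')$. But a (generalized) diagonally equipotent $Z$-matrix with positive diagonal annihilates a positive vector $v$, namely its defining weights, so $B(\beta)v=0$, $B(\beta)$ is singular, $\det B=0$, and $B\notin M^\bullet_n$. This step is short and uses only Remark~\ref{remark1} together with the observation that equipotence produces an explicit positive null vector.

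The converse is the main work, and I would base it on the Frobenius normal form. After a permutation $B$ is block upper triangular with irreducible diagonal blocks $B_1,\dots,B_k$, each equal to some principal submatrix $B(\beta_l)$. Each $B_l$ inherits (generalized) diagonal dominance from $B$, because deleting the columns outside $\beta_l$ only decreases the weighted off-diagonal row sums, and, being a principal submatrix, is not (generalized) diagonally equipotent by hypothesis. In the ordinary case $B_l$ is then irreducibly diagonally dominant (Definition~\ref{d1}) and hence, as a $Z$-matrix with positive diagonal, a nonsingular $M$-matrix by the classical irreducible-dominance theorem \cite{R.S.15}. In the generalized case I would instead invoke Perron--Frobenius: with $\alpha$ the positive weight vector of $B$, one has $0\le B_l\alpha$ on $\beta_l$, while non-equipotence forces $B_l\alpha\neq0$ there; for an irreducible nonnegative $P$ with $B_l=sI-P$ the inequality $0\le(sI-P)\alpha\neq0$ with $\alpha>0$ yields $\rho(P)<s$, so $B_l\in M^\bullet_n$. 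Finally, a block triangular $Z$-matrix whose diagonal blocks lie in $M^\bullet_n$ is itself in $M^\bullet_n$, since its inverse is block triangular with the nonnegative blocks $B_l^{-1}$ on the diagonal and nonnegative products off it; this gives $B\in M^\bullet_n$.

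The hard part will be the converse, and within it the generalized case, where the passage from ``not generalized diagonally equipotent'' to nonsingularity of each irreducible block rests precisely on the Perron--Frobenius inequality for irreducible nonnegative matrices. Reassembling the global nonsingular $M$-matrix property from the diagonal blocks, and verifying carefully that (generalized) diagonal dominance is genuinely inherited by the blocks of the Frobenius form, are the remaining and more routine points.
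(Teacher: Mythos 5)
The paper does not actually prove Lemma~\ref{l9}: it is imported wholesale from the cited references \cite{Z.C17,Z.X19,Z.X22,zhang26}, so there is no in-paper argument to compare yours against. Judged on its own terms, your proof is essentially sound and its architecture is the natural one: the reduction of the $H$-matrix claim to the $M$-matrix claim via $B=\mu(A)$ is legitimate because membership in $D_n$ ($GD_n$), the location of (generalized) diagonally equipotent principal submatrices, and the definition $A\in H^I_n\Leftrightarrow\mu(A)\in M^\bullet_n$ all depend only on the moduli $|a_{ij}|$; the necessity via Remark~\ref{remark1} plus the explicit positive null vector of an equipotent block is correct; and the sufficiency via the Frobenius normal form, the classical irreducible-dominance theorem in the unweighted case, the Perron--Frobenius inequality $P\alpha\le s\alpha$, $P\alpha\ne s\alpha$, $\alpha>0\Rightarrow\rho(P)<s$ in the weighted case, and the reassembly through nonnegativity of the block-triangular inverse, is a complete and standard chain. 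This is very likely close in spirit to the proofs in the cited sources, which also rest on the irreducible normal form and the Varga/Perron--Frobenius machinery.

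One step deserves repair. You write that the $1\times1$ no-equipotence hypothesis ``forces $b_{ii}\ne0$ \ldots\ hence $B$ has positive diagonal,'' but $b_{ii}\ne0$ does not give $b_{ii}>0$, and the sign matters: the $1\times1$ matrix $(-1)$ lies in $D_1\cap Z_1$, has no diagonally equipotent principal submatrices, and is not an $M$-matrix, so the ``furthermore'' half of the lemma is literally false unless nonnegative diagonal entries are assumed (as is tacit throughout this literature, and as is automatic for $\mu(A)$, so the $H$-matrix half is unaffected). You should either add the hypothesis $b_{ii}\ge0$ explicitly before invoking the irreducible-dominance theorem and the splitting $B_l=sI-P$ with $s=\max_i b_{ii}$, or observe that it is harmless because the only place the lemma is applied is to comparison matrices. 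A second, smaller point: Remark~\ref{remark1} in the generalized case involves two \emph{a priori} different weight vectors (the global one for $GD_n$ and the local one witnessing equipotence of $B(\beta)$), and the paper states it without proof; since your necessity argument leans on it, a sentence reconciling the two weight systems (e.g., by rescaling columns by the global weights and then arguing on the rows where the local weights attain their maximum) would make that step self-contained.
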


\begin{lemma}\label{l11} {\rm (see \cite{AB2})}
 \ \ \ \ \ $SD_n\cup ID_n\subset H^{I}_n=GSD_n$
\end{lemma}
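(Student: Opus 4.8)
The plan is to prove the two assertions $SD_n\cup ID_n\subset H_n^I$ and $H_n^I=GSD_n$ separately, reading the displayed equality as a chain of equivalences. I would first establish $H_n^I=GSD_n$ by passing to comparison matrices. Since $\mu(A)\in Z_n$ for every $A$, and since $A$ and $\mu(A)$ have identical entrywise moduli, the generalized strict diagonal dominance condition (\ref{av}) is literally the same for $A$ and for $\mu(A)$; so it suffices to show that a $Z$-matrix $B$ lies in $M_n^\bullet$ if and only if it is generalized strictly diagonally dominant. The key observation is that (\ref{av}), with positive weights $\alpha_1,\dots,\alpha_n$, says exactly that the positive vector $x=(\alpha_1,\dots,\alpha_n)^T$ satisfies $Bx>0$ componentwise, because the $i$th entry of $\mu(A)x$ equals $|a_{ii}|\alpha_i-\sum_{j\neq i}|a_{ij}|\alpha_j$. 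For a $Z$-matrix, the existence of such a positive $x$ with $Bx>0$ is one of the standard characterizations of a nonsingular $M$-matrix, equivalent to the defining condition $B=sI-P$ with $P\ge 0$ and $s>\rho(P)$. This gives $A\in GSD_n\iff\mu(A)\in M_n^\bullet\iff A\in H_n^I$, hence $H_n^I=GSD_n$.

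The inclusion $SD_n\subset H_n^I$ is then immediate: if (\ref{2}) holds strictly for all $i$, then taking all $\alpha_i=1$ shows $A\in GSD_n$, so $A\in H_n^I$ by the equality just proved. The remaining, and main, step is $ID_n\subset H_n^I$. Here I would invoke Lemma \ref{l9}: since $ID_n\subset D_n$, it is enough to show that an irreducibly diagonally dominant $A$ possesses no diagonally equipotent principal submatrix. Suppose, for contradiction, that $A(\alpha)$ is diagonally equipotent for some nonempty $\alpha\subseteq\langle n\rangle$, and write $\alpha'=\langle n\rangle-\alpha$.

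If $\alpha=\langle n\rangle$, then equality holds in (\ref{2}) for every $i$, contradicting the defining property of $ID_n$ that strict inequality holds for at least one index. If $\alpha\subsetneq\langle n\rangle$, then for each $i\in\alpha$ the equipotence gives $|a_{ii}|=\sum_{j\in\alpha,\,j\neq i}|a_{ij}|$, while diagonal dominance of $A$ gives $|a_{ii}|\ge\sum_{j\in\alpha,\,j\neq i}|a_{ij}|+\sum_{j\in\alpha'}|a_{ij}|$; subtracting forces $\sum_{j\in\alpha'}|a_{ij}|=0$, i.e. $A(\alpha,\alpha')=0$, which is precisely the mechanism recorded in Remark \ref{remark1}. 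But after the permutation that lists the indices of $\alpha'$ before those of $\alpha$, the vanishing block $A(\alpha,\alpha')=0$ puts $A$ into the block form (\ref{matri22x1}), so $A$ is reducible, contradicting irreducibility. Hence no diagonally equipotent principal submatrix exists, and Lemma \ref{l9} yields $A\in H_n^I$.

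I expect the $M$-matrix characterization used in the first step (existence of a positive $x$ with $\mu(A)x>0$) to be the only place requiring an external result; everything else is bookkeeping with the definitions together with the reducibility argument. The one subtlety worth flagging is the need to split the equipotent-submatrix analysis into the full-matrix case $\alpha=\langle n\rangle$, which is ruled out by the strict inequality built into the definition of $ID_n$, and the proper-subset case $\alpha\subsetneq\langle n\rangle$, which is ruled out by irreducibility via the forced zero block $A(\alpha,\alpha')=0$.
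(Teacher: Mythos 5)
Your proof is correct. Note that the paper itself offers no argument for this lemma at all: it is stated with a bare citation to Berman and Plemmons, so there is no in-paper proof to compare against. Your reconstruction is a legitimate and essentially standard derivation. The identification of the weighted dominance condition (\ref{av}) with the existence of a positive vector $x$ satisfying $\mu(A)x>0$, combined with the classical characterization of nonsingular $M$-matrices among $Z$-matrices, correctly yields $H_n^I=GSD_n$, and $SD_n\subset GSD_n$ is immediate with unit weights. For $ID_n\subset H_n^I$ you lean on Lemma \ref{l9} (itself only cited in the paper, so no circularity within the paper's logical ordering), and your case split is handled properly: the full-index case $\alpha=\langle n\rangle$ is excluded by the strict inequality required at one index in the definition of $ID_n$, and the proper-subset case forces $A(\alpha,\alpha')=0$ by subtracting the equipotence identity from the row dominance inequality, which after the obvious permutation exhibits $A$ in the reducible form (\ref{matri22x1}) and contradicts irreducibility. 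The only external ingredient is the positive-vector criterion for nonsingular $M$-matrices, which is exactly what the cited reference supplies; everything else follows from the paper's definitions and Remark \ref{remark1}.
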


\begin{lemma}\label{lemma2.8} {\rm (see \cite{bru2008})}
 \ \ \ \ \ $GD_n\subset H_n$.
\end{lemma}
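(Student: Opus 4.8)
The aim is to prove $GD_n\subset H_n$, i.e. to show that $\mu(A)\in M_n$ whenever $A\in GD_n$. The plan is to produce, directly, the $M$-matrix splitting $\mu(A)=sI-P$ with $P\ge 0$ and $s\ge\rho(P)$, letting the weights of the generalized diagonal dominance control the spectral radius of $P$.

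First I would record the sign structure of the comparison matrix. Writing $B:=\mu(A)=(\mu_{ij})$, the definition of $\mu(\cdot)$ gives $\mu_{ij}=-|a_{ij}|\le 0$ for $i\ne j$ and $\mu_{ii}=|a_{ii}|\ge 0$, so $B\in Z_n$ immediately. Since $A\in GD_n$, Definition~\ref{d2} furnishes positive constants $\alpha_1,\dots,\alpha_n$ with $\alpha_i|a_{ii}|\ge\sum_{j\ne i}\alpha_j|a_{ij}|$ for every $i\in\langle n\rangle$. Setting $x=(\alpha_1,\dots,\alpha_n)^T>0$, this reads $(Bx)_i=\alpha_i|a_{ii}|-\sum_{j\ne i}\alpha_j|a_{ij}|\ge 0$; that is, $Bx\ge 0$ with $x>0$.

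Next I would construct the splitting. Put $s:=\max_{i\in\langle n\rangle}|a_{ii}|$ and $P:=sI-B$. By the sign pattern above, $P_{ii}=s-|a_{ii}|\ge 0$ and $P_{ij}=|a_{ij}|\ge 0$ for $i\ne j$, so $P\ge 0$ and $B=sI-P$ is a legitimate $Z$-matrix splitting. It remains to check $s\ge\rho(P)$, and this is exactly where the dominance weights are used: for the positive vector $x$ one has $(Px)_i=s\alpha_i-(Bx)_i\le s\alpha_i$, since $(Bx)_i\ge 0$, whence $(Px)_i/\alpha_i\le s$ for every $i$. With $X=\mathrm{diag}(\alpha_1,\dots,\alpha_n)>0$, the nonnegative matrix $X^{-1}PX$ has $i$th row sum equal to $(Px)_i/\alpha_i\le s$, and since the spectral radius of a nonnegative matrix is bounded by its largest row sum, $\rho(P)=\rho(X^{-1}PX)\le s$. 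Hence $\mu(A)=B=sI-P$ with $P\ge 0$ and $s\ge\rho(P)$, so $\mu(A)\in M_n$ and therefore $A\in H_n$, as claimed.

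I expect the only delicate point to be the verification of $s\ge\rho(P)$ with the inequality in the correct direction; the remainder is bookkeeping on the sign pattern of $\mu(A)$. A reader preferring to avoid the Perron-type row-sum bound can reach the same conclusion by perturbation: for each $\varepsilon>0$ the matrix $B+\varepsilon I\in Z_n$ satisfies $(B+\varepsilon I)x\ge\varepsilon x>0$, which forces strictly positive diagonal entries and strict generalized diagonal dominance, so $B+\varepsilon I\in GSD_n\cap Z_n$, and being a $Z$-matrix with positive diagonal it coincides with its own comparison matrix, whence $B+\varepsilon I\in M^{\bullet}_n$ by Lemma~\ref{l11}; letting $\varepsilon\to 0^+$ and using that $M_n=M^{\bullet}_n\cup M^{0}_n$ is closed yields $B\in M_n$. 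The degenerate case $a_{ii}=0$ needs no separate treatment: the dominance inequality then forces the entire $i$th row of $A$ to vanish, and the splitting above still applies, producing a possibly singular $M$-matrix, in agreement with $\mu(A)\in M_n$.
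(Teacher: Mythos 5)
Your proof is correct, but it takes a different route from the paper: the paper offers no argument at all for this lemma, simply citing Bru, Corral, Gim\'enez and Mas for the inclusion $GD_n\subset H_n$, whereas you give a self-contained, first-principles verification. Your main line is sound: $\mu(A)\in Z_n$ is immediate, the dominance weights give $\mu(A)x\ge 0$ for $x=(\alpha_1,\dots,\alpha_n)^T>0$, and with $s=\max_i|a_{ii}|$ the matrix $P=sI-\mu(A)$ is nonnegative with $(Px)_i/\alpha_i\le s$, so the weighted row-sum bound yields $\rho(P)=\rho(X^{-1}PX)\le s$ and hence $\mu(A)=sI-P\in M_n$ exactly as the paper defines $M$-matrices. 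This matches what one finds in the cited reference and has the advantage of making the paper self-contained; it also makes transparent why the singular case ($s=\rho(P)$, e.g.\ diagonally equipotent rows or a zero row) is automatically covered, which is precisely the regime the rest of the paper cares about. Your alternative perturbation argument is also fine, though it quietly uses that $M_n$ is closed in $Z_n$; that is true (take the canonical splitting with $s=\max_i a_{ii}$ and note $s$ and $\rho(sI-A)$ depend continuously on $A$), but since you already have the direct splitting argument, the perturbation route is redundant. The only cosmetic caveat is that the lemma's ``$\subset$'' is used by the paper in the sense of containment (properness is addressed separately by the counterexamples mentioned right after the lemma), and containment is exactly what you prove.
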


It is interested in wether $H_n\subseteq GD_n$ is true or not. The
answer is "NOT". Some counterexamples are given in \cite{bru2008} to
show that $H_n\subseteq GD_n$ is not true. But, under the condition
of "irreducibility", the following conclusion holds.

\begin{lemma}\label{0.1.7}{\rm (see \cite{bru2008})}
Let $A\in \mathbb{C}^{n\times n}$ be irreducible. Then $A\in H_n$ if
and only if $A\in GD_n$.
\end{lemma}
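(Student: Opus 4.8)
The plan is to observe that the forward implication $A\in GD_n\Rightarrow A\in H_n$ is exactly Lemma~\ref{lemma2.8} and requires no irreducibility hypothesis, so the entire content of the statement lies in the reverse implication: if $A$ is irreducible and $A\in H_n$, then $A\in GD_n$. My strategy is to translate generalized diagonal dominance into a statement about a single strictly positive vector acting on the comparison matrix $\mu(A)$, and then to manufacture such a vector directly from the $M$-matrix structure of $\mu(A)$.

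First I would rewrite Definition~\ref{d2}. Setting $\alpha=(\alpha_1,\dots,\alpha_n)^T$ and recalling that $\mu_{ii}=|a_{ii}|$ and $\mu_{ij}=-|a_{ij}|$ for $i\ne j$, inequality (\ref{av}) is equivalent to $\sum_{j=1}^n \mu_{ij}\alpha_j\ge 0$ for every $i$, so that, with all vector inequalities understood componentwise,
\[
A\in GD_n\ \Longleftrightarrow\ \mbox{there exists}\ \alpha>0\ \mbox{such that}\ \mu(A)\alpha\ge 0.
\]
Thus it suffices to exhibit one strictly positive vector $\alpha$ with $\mu(A)\alpha\ge 0$.

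Next I would transfer irreducibility. Since $\mu(A)$ and $A$ share the same off-diagonal zero pattern (because $a_{ij}=0\iff\mu_{ij}=0$ for $i\ne j$), the irreducibility of $A$ forces $\mu(A)$ to be irreducible. As $A\in H_n$ means $\mu(A)\in M_n$, I write $\mu(A)=sI-B$ with $B\ge 0$ and $s\ge\rho(B)$; the matrix $B=sI-\mu(A)$ inherits the off-diagonal pattern of $\mu(A)$ and is therefore an irreducible nonnegative matrix. I then split according to the partition (\ref{2.0}). If $\mu(A)\in M^{\bullet}_n$, i.e. $s>\rho(B)$, irreducibility yields $\mu(A)^{-1}>0$ entrywise, so $\alpha=\mu(A)^{-1}e>0$, where $e$ is the all-ones vector, satisfies $\mu(A)\alpha=e>0$. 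If $\mu(A)\in M^{0}_n$, i.e. $s=\rho(B)$, the Perron--Frobenius theorem applied to the irreducible nonnegative matrix $B$ furnishes a strictly positive eigenvector $\alpha>0$ with $B\alpha=\rho(B)\alpha=s\alpha$, whence $\mu(A)\alpha=(sI-B)\alpha=0\ge 0$. In either case the positive vector demanded by the equivalence above exists, and therefore $A\in GD_n$.

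The main obstacle I anticipate is the singular case $\mu(A)\in M^{0}_n$: one must carefully justify that $B$ is genuinely irreducible, so that Perron--Frobenius delivers a \emph{strictly} positive eigenvector rather than a merely nonnegative one, and that this eigenvector furnishes a kernel element of $\mu(A)$ with no sign obstruction. The nonsingular case is comparatively routine once the entrywise positivity of $\mu(A)^{-1}$ for irreducible nonsingular $M$-matrices is invoked. A minor closing remark worth recording is that the equivalence in the first step is itself reversible, which re-derives Lemma~\ref{lemma2.8} in the irreducible setting and confirms the consistency of the two directions.
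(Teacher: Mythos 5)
Your proposal is correct, but note that the paper does not actually prove Lemma~\ref{0.1.7} at all: it is quoted verbatim from \cite{bru2008} with only a citation, so there is no internal argument to compare against. Your blind reconstruction is essentially the standard proof and it holds up. The reduction of Definition~\ref{d2} to the existence of $\alpha>0$ with $\mu(A)\alpha\ge 0$ is exact, the transfer of irreducibility from $A$ to $\mu(A)$ and to $B=sI-\mu(A)$ is legitimate because irreducibility depends only on the off-diagonal zero pattern, and the two cases of the partition (\ref{2.0}) are handled by the right tools: entrywise positivity of the inverse of an irreducible nonsingular $M$-matrix when $s>\rho(B)$ (which in fact lands you in $GSD_n$, consistent with Lemma~\ref{l11}), and the strictly positive Perron eigenvector when $s=\rho(B)$ (which lands you in $GDE_n$). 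The one place where I would ask for slightly more care is your closing remark that the equivalence ``re-derives Lemma~\ref{lemma2.8}'': passing from the existence of $\alpha>0$ with $B\alpha\le s\alpha$ back to $\rho(B)\le s$ is true but requires the subinvariance inequality for nonnegative matrices; it is not a formal reversal of the definition. Since that remark is peripheral, the proof of the stated lemma itself is complete; what your argument buys over the paper's bare citation is a short, self-contained derivation from Perron--Frobenius theory.
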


More importantly, under the condition of "reducibility", we have the
following conclusion.

\begin{lemma}\label{0.1.8}
Let $A\in \mathbb{C}^{n\times n}$ be reducible. Then $A\in H_n$ if
and only if in the Frobenius normal from of $A$
\begin{equation}\label{eq5a}
 \ PAP^T = \left[
 \begin{array}{cccc}
 \ R_{11} & \ R_{12} & \cdots & R_{1s} \\
 \ 0  & \ R_{22} & \cdots & R_{2s} \\
 \ \vdots & \vdots & \ddots & \vdots \\
 \ 0  & \ 0 & \cdots & R_{ss}
 \end{array}
 \right],
 \end{equation}
each irreducible diagonal square block $R_{ii}$ is generalized
diagonally dominant, where $P$ is a permutation matrix,
$R_{ii}=A(\alpha_i)$ is either $1\times 1$ zero matrices or
irreducible square matrices, $R_{ij}=A(\alpha_i,\alpha_j),\ i\neq
j,\ i,j=1,2,\cdots,s,$ further, $\alpha_i\cap\alpha_j=\emptyset\
for\ i\neq j,$ and $\cup^s_{i=1}\alpha_i=\langle n\rangle$.
\end{lemma}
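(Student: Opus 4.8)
The plan is to reduce the reducible case to the irreducible case of Lemma \ref{0.1.7} by exploiting the block triangular structure of the Frobenius normal form at the level of comparison matrices. First I would record that membership in $H_n$ is invariant under permutation similarity. Indeed, for any permutation matrix $P$ one has $\mu(PAP^T)=P\mu(A)P^T$, since the comparison-matrix operation only adjusts moduli and signs and preserves the diagonal/off-diagonal partition, which permutation similarity respects. Moreover, writing $\mu(A)=sI-B$ with $B\geq 0$ gives $\mu(PAP^T)=sI-PBP^T$ with $PBP^T\geq 0$ and $\rho(PBP^T)=\rho(B)$, so $\mu(A)\in M_n$ if and only if $\mu(PAP^T)\in M_n$; that is, $A\in H_n$ if and only if $PAP^T\in H_n$. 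Hence it suffices to prove the blockwise characterization for the Frobenius normal form $B=PAP^T$ displayed in (\ref{eq5a}).

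Next comes the crucial step. Because the comparison-matrix operation sends zero entries to zero entries, $\mu(B)$ inherits the block upper triangular structure of (\ref{eq5a}), with diagonal blocks exactly $\mu(R_{ii})$, and $\mu(B)\in Z_n$. I would then show that a block upper triangular $Z$-matrix lies in $M_n$ if and only if each of its diagonal blocks is an $M$-matrix. Writing $\mu(B)=sI-N$ with $s$ no smaller than the largest diagonal entry of $\mu(B)$, so that $N\geq 0$, the matrix $N$ is again block upper triangular with diagonal blocks $N_{ii}=sI-\mu(R_{ii})$; its spectrum is therefore the union of the spectra of the $N_{ii}$, and $\rho(N)=\max_i\rho(N_{ii})$. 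Since each $N_{ii}\geq 0$, the Perron--Frobenius property yields $\rho(N_{ii}+\delta I)=\rho(N_{ii})+\delta$ for $\delta\geq 0$, so the condition $s\geq\rho(N_{ii})$, equivalently $\mu(R_{ii})\in M$, is independent of the particular admissible $s$. Consequently $\mu(B)\in M_n$ iff $s\geq\rho(N)$ iff $s\geq\rho(N_{ii})$ for every $i$ iff $\mu(R_{ii})\in M$ for every $i$, which by the definition of a general $H$-matrix reads $B\in H_n$ if and only if $R_{ii}\in H$ for all $i$.

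Finally I would translate $R_{ii}\in H$ into generalized diagonal dominance blockwise. When $R_{ii}$ is a $1\times 1$ zero matrix, both $R_{ii}\in H_n$ and $R_{ii}\in GD_n$ hold trivially, the comparison matrix $(0)$ being a singular $M$-matrix and the dominance inequality reducing to $0\geq 0$; when $R_{ii}$ is genuinely irreducible, Lemma \ref{0.1.7} gives $R_{ii}\in H$ if and only if $R_{ii}\in GD$ directly. Combining this with the equivalence of the previous paragraph shows that $A\in H_n$ if and only if every diagonal block $R_{ii}$ of the Frobenius normal form is generalized diagonally dominant, as claimed. The main obstacle I anticipate is the block triangular $M$-matrix characterization of the second step: one must argue carefully that a \emph{single} shift parameter $s$ simultaneously certifies all diagonal blocks, and it is precisely the shift-invariance of $\rho(N_{ii})$ under Perron--Frobenius that makes this reduction rigorous.
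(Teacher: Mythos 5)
Your proposal is correct, and it follows the same skeleton as the paper's proof: reduce to the Frobenius normal form and invoke the irreducible characterization of Lemma \ref{0.1.7} blockwise. The difference is that the paper disposes of the reduction step by citing Theorem 5 of \cite{bru2008}, whereas you prove it directly: you observe that $\mu(PAP^T)=P\mu(A)P^T$ inherits the block upper triangular structure with diagonal blocks $\mu(R_{ii})$, and then establish that a block upper triangular $Z$-matrix lies in $M_n$ if and only if each diagonal block does, via $\rho(N)=\max_i\rho(N_{ii})$ for the nonnegative block triangular part together with the shift-invariance $\rho(N_{ii}+\delta I)=\rho(N_{ii})+\delta$, which correctly handles the subtlety that a single shift parameter $s$ must certify all blocks simultaneously and that membership in $M_n$ does not depend on the admissible representation $sI-B$ chosen. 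This makes your argument self-contained where the paper's is not, at the modest cost of redoing a known structural fact about general $H$-matrices; the treatment of the $1\times1$ zero blocks and the appeal to Lemma \ref{0.1.7} for the genuinely irreducible blocks are both handled correctly.
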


The proof of this theorem follows from Lemma \ref{0.1.7} and Theorem
5 in \cite{bru2008}.

\begin{lemma}\label{le0}
A matrix $A\in H^{M}_n\cup H^{S}_n$ if and only if in the Frobenius
normal from (\ref{eq5a}) of $A$, each irreducible diagonal square
block $R_{ii}$ is generalized diagonally dominant and has at least
one generalized diagonally equipotent principal submatrix.
\end{lemma}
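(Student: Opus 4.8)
The plan is to reduce the statement to the already-established structural results by observing that $H^{M}_n\cup H^{S}_n$ is exactly the set of general $H$-matrices whose comparison matrix is a \emph{singular} $M$-matrix. Indeed, by the definitions of the three subclasses together with the partition (\ref{2.01}), a matrix $A\in H_n$ lies in $H^{M}_n\cup H^{S}_n$ if and only if $\mu(A)\in M^{0}_n$, whereas $A\in H^{I}_n$ if and only if $\mu(A)\in M^{\bullet}_n$. Thus it suffices to show that, for $A\in H_n$ written in the Frobenius normal form (\ref{eq5a}), the comparison matrix $\mu(A)$ is singular precisely when at least one diagonal block carries a generalized diagonally equipotent principal submatrix, while the membership $A\in H_n$ itself is governed by Lemma \ref{0.1.8}.

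First I would pass to the permuted matrix $PAP^T$ and record that $\mu(PAP^T)=P\mu(A)P^T$ is block upper triangular with diagonal blocks $\mu(R_{ii})=\mu(A(\alpha_i))$, since forming the comparison matrix acts entrywise and leaves the zero blocks below the diagonal untouched. By Lemma \ref{0.1.8}, $A\in H_n$ is equivalent to each irreducible block $R_{ii}$ being generalized diagonally dominant, and in that case $\mu(A)\in M_n$ by the definition of a general $H$-matrix; moreover each block satisfies $R_{ii}\in H_n$ (by Lemma \ref{0.1.7} when $R_{ii}$ is irreducible, and trivially when $R_{ii}$ is a $1\times1$ zero block), so $\mu(R_{ii})\in M_n$. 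From the block-triangular determinant factorization \[ \det\mu(A)=\prod_{i=1}^{s}\det\mu(R_{ii}) \] together with the fact that an $M$-matrix is singular exactly when it lies in $M^{0}_n$, it follows that $\mu(A)\in M^{0}_n$ if and only if some block $\mu(R_{kk})\in M^{0}_n$, equivalently $R_{kk}\notin H^{I}_n$.

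The final step converts the condition $R_{kk}\notin H^{I}_n$ into the geometric condition in the statement. Since each $R_{kk}\in GD_n$, Lemma \ref{l9} applies and shows that $R_{kk}\in H^{I}_n$ if and only if $R_{kk}$ possesses no generalized diagonally equipotent principal submatrix; hence $R_{kk}\notin H^{I}_n$ exactly when $R_{kk}$ has such a submatrix. Chaining these equivalences yields that $A\in H^{M}_n\cup H^{S}_n$ if and only if each $R_{ii}\in GD_n$ and at least one $R_{kk}$ has a generalized diagonally equipotent principal submatrix, which is the assertion; both directions follow by reading the chain forward and backward.

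I expect the main technical point to be the careful bookkeeping at the block level: one must confirm that the diagonal blocks of $\mu(PAP^T)$ are exactly the comparison matrices $\mu(R_{ii})$, that each of these is an $M$-matrix (so that its singularity is the same as membership in $M^{0}_n$, which is what lets Lemma \ref{l9} be invoked through the equivalence $\mu(R_{kk})\in M^{0}_n\Leftrightarrow R_{kk}\notin H^{I}_n$), and that the $1\times1$ zero diagonal blocks are treated consistently---such a block is itself generalized diagonally equipotent and has singular comparison matrix $(0)\in M^{0}_n$, so it is correctly counted as contributing an equipotent principal submatrix. Once the determinant factorization and this identification of the diagonal blocks are in place, the rest of the argument is a routine chaining of the quoted lemmas.
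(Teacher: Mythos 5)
Your proof is correct and takes essentially the same route as the paper, whose own proof is a one-line citation of the partition (\ref{2.01}), Lemma \ref{l9} and Lemma \ref{0.1.8} --- exactly the three ingredients you chain together, with the block-triangular determinant factorization of $\mu(A)$ supplying the bookkeeping the paper leaves implicit. One remark: you have (correctly) read the conclusion as ``at least one block $R_{kk}$ has a generalized diagonally equipotent principal submatrix''; the literal wording ``each $R_{ii}$ \dots\ has at least one'' would be false (e.g.\ $A=\mathrm{diag}(1,0)\in H^{S}_2$ has the block $R_{11}=(1)$ with no such submatrix), so your reading is the one the argument actually establishes.
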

\begin{proof}
It follows from (\ref{2.01}), Lemma \ref{l9} and Lemma \ref{0.1.8}
that the conclusion of this lemma is obtained immediately.
\end{proof}

\section{Some special matrices and their properties}\label{definition-sec} In order to investigate convergence on Gauss-Seidel iterative
methods, some definitions of special matrices will be defined and
their properties will be proposed to be used in this paper.

\begin{definition}\label{gi0}{\rm (see \cite{cyzhang01})}
Let $E^{i\theta}=(e^{i\theta_{rs}})\in \mathbb{C}^{n\times n}$,
where $e^{i\theta_{rs}}=\cos\theta_{rs}+i\sin\theta_{rs}$,
$i=\sqrt{-1}$ and $\theta_{rs}\in \mathbb{R}$ for all $r,s\in
\langle n\rangle$. The matrix $E^{i\theta}=(e^{i\theta_{rs}})\in
\mathbb{C}^{n\times n}$ is called $\theta-$ray pattern matrix if
\begin{enumerate}
\item $\theta_{rs}+\theta_{sr}=2k\pi$ holds for all $r,s\in
\langle n\rangle,\ r\neq s$, where $k\in \mathbb{Z}$;
\item both $\theta_{rs}-\theta_{rt}=\theta_{ts}+(2k+1)\pi$ and $\theta_{sr}-\theta_{tr}=\theta_{st}+(2k+1)\pi$ hold for all $r,s,t\in
\langle n\rangle$ and $r\neq s,\ r\neq t,\ t\neq s$, where $k\in
\mathbb{Z}$;
\item $\theta_{rr}=\theta$ for all $r\in \langle n\rangle$, $\theta\in [0,2\pi)$.
\end{enumerate}
\end{definition}

\begin{definition}\label{gi0'}
Let $E^{i\psi}=(e^{i\psi_{rs}})\in \mathbb{C}^{n\times n}$, where
$e^{i\psi_{rs}}=\cos\psi_{rs}+i\sin\psi_{rs}$, $i=\sqrt{-1}$ and
$\psi_{rs}\in \mathbb{R}$ for all $r,s\in \langle n\rangle$. The
matrix $E^{i\psi}=(e^{i\psi_{rs}})\in \mathbb{C}^{n\times n}$ is
called $\psi-$ray pattern matrix if
\begin{enumerate}
\item $\psi_{rs}+\psi_{sr}=2k\pi+\psi$ holds for all $r,s\in
\langle n\rangle,\ r\neq s$, where $k\in \mathbb{Z}$;
\item $\psi_{rs}-\psi_{rt}=\psi_{ts}+(2k+1)\pi$ if $r<s<t$ or $s<t<r$ or $t<r<s$
for all $r,s,t\in \langle n\rangle$, where $k\in \mathbb{Z}$;
\item $\psi_{rs}-\psi_{rt}=\psi_{ts}-\psi+(2k+1)\pi$ if $r<t<s$ or $t<s<r$ or $s<r<t$
for all $r,s,t\in \langle n\rangle$, where $k\in \mathbb{Z}$;
\item $\psi_{rr}=0$ for all $r\in \langle n\rangle$.
\end{enumerate}
\end{definition}

\begin{definition}\label{gi0'1}
Let $E^{i\phi}=(e^{i\phi_{rs}})\in \mathbb{C}^{n\times n}$, where
$e^{i\phi_{rs}}=\cos\phi_{rs}+i\sin\phi_{rs}$, $i=\sqrt{-1}$ and
$\phi_{rs}\in \mathbb{R}$ for all $r,s\in \langle n\rangle$. The
matrix $E^{i\phi}=(e^{i\phi_{rs}})\in \mathbb{C}^{n\times n}$ is
called $\phi-$ray pattern matrix if
\begin{enumerate}
\item $\phi_{rs}+\phi_{sr}=2k\pi+\phi$ holds for all $r,s\in
\langle n\rangle,\ r\neq s$, where $k\in \mathbb{Z}$;
\item $\phi_{rs}-\phi_{rt}=\phi_{ts}-\phi+(2k+1)\pi$ if $r<s<t$ or $s<t<r$ or $t<r<s$
for all $r,s,t\in \langle n\rangle$, where $k\in \mathbb{Z}$;
\item $\phi_{rs}-\phi_{rt}=\phi_{ts}+(2k+1)\pi$ if $r<t<s$ or $t<s<r$ or $s<r<t$
for all $r,s,t\in \langle n\rangle$, where $k\in \mathbb{Z}$;
\item $\phi_{rr}=0$ for all $r\in \langle n\rangle$.
\end{enumerate}
\end{definition}

\begin{definition}\label{gi1}
Any matrix $A=(a_{rs})\in \mathbb{C}^{n\times n}$ has the following
form:
\begin{equation}\label{eq2}A=e^{i\eta}\cdot|A|\otimes
E^{i\chi}=(e^{i\eta}\cdot|a_{rs}|e^{i\chi_{rs}})\in
\mathbb{C}^{n\times n},\end{equation} where  $\eta\in \mathbb{R}$,
$|A|=(|a_{rs}|)\in \mathbb{R}^{n\times n}$ and
$E^{i\chi}=(e^{i\chi_{rs}})\in \mathbb{C}^{n\times n}$,
$\chi_{rs}\in \mathbb{R}$ for $r,s\in \langle n\rangle.$ The matrix
$E^{i\chi}$ is called ray pattern matrix of the matrix $A$. If the
ray pattern matrix $E^{i\chi}$ of the matrix $A$ is a $\theta-$ray
pattern matrix, then $A$ is called a $\theta-$ray matrix; if the ray
pattern matrix $E^{i\chi}$ of the matrix $A$ is a $\psi-$ray pattern
matrix, then $A$ is called a $\psi-$ray matrix; and if the ray
pattern matrix $E^{i\chi}$ of the matrix $A$ is a $\phi-$ray pattern
matrix, then $A$ is called a $\phi-$ray matrix.
\end{definition}

$\mathscr{R}^{\theta}_n$, $\mathscr{U}^{\psi}_n$ and
$\mathscr{L}^{\phi}_n$ denote the set of all $n\times n$
$\theta-$ray matrices, the set of all $n\times n$ $\psi-$ray
matrices and the set of all $n\times n$ $\phi-$ray matrices,
respectively. Obviously, if a matrix $A\in \mathscr{R}^{\theta}_n$,
then $\xi\cdot A\in \mathscr{R}^{\theta}_n$ for all $\xi\in
\mathbb{C}$, the same is the matrices in $\mathscr{U}^{\psi}_n$ and
$\mathscr{L}^{\phi}_n$, respectively.

\begin{theorem}\label{thgi0} Let a matrix $A=D_A-L_A-U_A=(a_{rs})\in \mathbb{C}^{n\times n}$
with $D_A=diag(a_{11}, a_{22},\cdots,a_{nn})$. Then $A\in
\mathscr{R}^{\theta}_n$ if and only if there exists an $n\times n$
unitary diagonal matrix $D$ such that
$D^{-1}AD=e^{i\eta}\cdot[|D_A|e^{i\theta}-(|L_A|+|U_A|)]$ for
$\eta\in \mathbb{R}$.
\end{theorem}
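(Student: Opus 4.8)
The plan is to reduce the matrix identity to a system of scalar equations among the phases of the entries of $A$, and to observe that the three defining conditions of a $\theta-$ray pattern matrix are exactly the solvability (compatibility) conditions for that system. Writing a candidate unitary diagonal matrix as $D=diag(e^{id_1},\ldots,e^{id_n})$ with $d_r\in\mathbb{R}$, conjugation merely rotates phases: $(D^{-1}AD)_{rr}=a_{rr}$ and $(D^{-1}AD)_{rs}=a_{rs}e^{i(d_s-d_r)}$ for $r\neq s$. Hence the asserted identity $D^{-1}AD=e^{i\eta}[|D_A|e^{i\theta}-(|L_A|+|U_A|)]$ is equivalent to the entrywise requirements $a_{rr}=e^{i\eta}|a_{rr}|e^{i\theta}$ on the diagonal and $a_{rs}e^{i(d_s-d_r)}=-e^{i\eta}|a_{rs}|$ off the diagonal. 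Substituting the ray decomposition $a_{rs}=e^{i\eta}|a_{rs}|e^{i\theta_{rs}}$ of Definition \ref{gi1}, these become $\theta_{rr}=\theta$ and, wherever $a_{rs}\neq 0$, the linear phase system $d_r-d_s\equiv\theta_{rs}-\pi\pmod{2\pi}$.

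For the sufficiency direction I would assume such a $D$ exists and read off the phases: the diagonal relation gives $\theta_{rr}=\theta$ (condition 3 of Definition \ref{gi0}), and the off-diagonal relation gives $\theta_{rs}=d_r-d_s+\pi\pmod{2\pi}$. Plugging this explicit expression into conditions 1 and 2 reduces them to identities that hold modulo $2\pi$ for any real $d_1,\ldots,d_n$: indeed $\theta_{rs}+\theta_{sr}\equiv 2\pi$ and $\theta_{rs}-\theta_{rt}\equiv d_t-d_s\equiv\theta_{ts}+\pi$, and symmetrically for the second equality of condition 2. Thus the ray pattern of $A$ is a $\theta-$ray pattern matrix, i.e. $A\in\mathscr{R}^{\theta}_n$.

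For necessity, assume $A\in\mathscr{R}^{\theta}_n$, so its pattern phases $\theta_{rs}$ satisfy conditions 1--3. I would solve the phase system explicitly against a reference column by setting $d_1:=0$ and $d_r:=\theta_{r1}-\pi$ for $r\neq 1$, and then verify $d_r-d_s\equiv\theta_{rs}-\pi\pmod{2\pi}$ for all $r\neq s$. When $r,s,1$ are distinct this is precisely the second equality of condition 2 (common second index $1$), which yields $\theta_{r1}-\theta_{s1}\equiv\theta_{rs}+(2k+1)\pi\equiv\theta_{rs}-\pi$; the cases $r=1$ or $s=1$ follow from condition 1. Setting $D=diag(e^{id_1},\ldots,e^{id_n})$ and using $\theta_{rr}=\theta$, the computation of the first paragraph then returns $D^{-1}AD=e^{i\eta}[|D_A|e^{i\theta}-(|L_A|+|U_A|)]$, as required.

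The only substantive point is the solvability of the phase system $d_r-d_s\equiv\theta_{rs}-\pi$, a discrete potential problem whose obstruction is exactly the failure of conditions 1 and 2; I expect the explicit reference-column construction to be cleaner than an abstract cocycle or spanning-tree argument because it exhibits the role of each condition directly. A minor technical point to address is that of vanishing entries: since the pattern phases $\theta_{rs}$ are prescribed for all $r,s$ (the entries of $E^{i\chi}$ lie on the unit circle) irrespective of whether $a_{rs}=0$, the construction of $D$ is unaffected, and the off-diagonal identity holds trivially where $a_{rs}=0$ because both sides vanish.
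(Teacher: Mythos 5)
Your proof is correct and follows essentially the same route as the paper's: both reduce the matrix identity to the entrywise phase relations $\theta_{rs}\equiv d_r-d_s+\pi\pmod{2\pi}$, construct $D$ from a reference row/column (the paper uses $\phi_r=\theta_{1r}+\phi_1+(2k+1)\pi$, you use $d_r=\theta_{r1}-\pi$, which differ only by an immaterial sign convention via condition~1), and verify the three conditions of Definition~\ref{gi0} in the converse direction. Your write-up is in fact slightly more careful than the paper's, since you explicitly check the reference-column construction against conditions~1 and~2 and address the vanishing-entry subtlety.
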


\begin{proof} According to Definition \ref{gi1}, $A=e^{i\eta}\cdot|A|\otimes
E^{i\theta}=(e^{i\eta}\cdot|a_{rs}|e^{i\theta_{rs}}).$ Define a
diagonal matrix
$D_{\phi}=diag(e^{i\phi_1},e^{i\phi_2},\cdots,e^{i\phi_n})$ with
$\phi_r=\theta_{1r}+\phi_1+(2k+1)\pi$ for $\phi_1\in \mathbb{R}$,
$r=2,3,\cdots,n,$ and $k\in \mathbb{Z}.$ By Definition \ref{gi0},
$D^{-1}AD=e^{i\eta}\cdot[|D_A|e^{i\theta}-(|L_A|+|U_A|)]$, which
shows that the necessity is true.

The following will prove the sufficiency. Assume that there exists
an $n\times n$ unitary diagonal matrix
$D_{\phi}=diag(e^{i\phi_1},\cdots,e^{i\phi_n})$ such that
$D_{\phi}^{-1}AD_{\phi}=e^{i\eta}\cdot[|D_A|e^{i\theta}-(|L_A|+|U_A|)]$.
Then the following equalities hold:
\begin{equation}\label{eqv1}
\begin{array}{ccccc}
 \theta_{rs}&=&\phi_s-\phi_r+(2k_1+1)\pi,\\
 \theta_{sr}&=&\phi_r-\phi_s+(2k_2+1)\pi,\\
 \theta_{rt}&=&\phi_t-\phi_r+(2k_3+1)\pi,\\
 \theta_{tr}&=&\phi_r-\phi_t+(2k_4+1)\pi,
 \end{array}
\end{equation}
where $k_1,k_2,k_3,k_4\in \mathbb{Z}.$ In (\ref{eqv1}),
$\theta_{rs}+\theta_{sr}=2(k_1+k_2+1)\pi=2k\pi$ with $k=k_1+k_2+1\in
\mathbb{Z}$ and for all $r,s\in \langle n\rangle,\ r\neq s$.
Following (\ref{eqv1}), $\theta_{ts}=\phi_s-\phi_t+(2k_5+1)\pi$.
Hence, $\phi_s-\phi_t=\theta_{ts}-(2k_5+1)\pi$. Consequently,
$\theta_{rs}-\theta_{rt}=\phi_s-\phi_t+2(k_1-k_3)\pi=\theta_{ts}+[2(k_1-k_3-k_5-1)+1]\pi
\theta_{ts}+(2k+1)\pi$ for all $r,s,t\in \langle n\rangle$ and
$r\neq s,\ r\neq t,\ t\neq s$, where $k=k_1-k_3-k_5-1\in
\mathbb{Z}$. In the same method, we can prove that
$\theta_{sr}-\theta_{tr}=\theta_{st}+(2k+1)\pi$ hold for all
$r,s,t\in \langle n\rangle$ and $r\neq s,\ r\neq t,\ t\neq s$, where
$k\in \mathbb{Z}$. Furthermore, it is obvious that
$\theta_{rr}=\theta$ for all $r\in \langle n\rangle$. This completes
the sufficiency.
\end{proof}

In the same method of proof as Theorem \ref{thgi0}, the following
conclusions will be established.

\begin{theorem}\label{thgi0a1} Let a matrix $A=D_A-L_A-U_A=(a_{rs})\in \mathbb{C}^{n\times n}$
with $D_A=diag(a_{11}, a_{22},\cdots,a_{nn})$. Then $A\in
\mathscr{U}^{\psi}_n$ if and only if there exists an $n\times n$
unitary diagonal matrix $D$ such that
$D^{-1}AD=e^{i\eta}\cdot[(|D_A|-|L_A|)-e^{i\psi}|U_A|]$ for $\eta\in
\mathbb{R}$.
\end{theorem}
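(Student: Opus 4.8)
The plan is to follow the proof of Theorem \ref{thgi0} line for line, the single new ingredient being that the strictly lower and strictly upper triangles of $A$ are now driven to \emph{different} target rays: the lower triangle to the negative real ray and the upper triangle to the ray $e^{i\psi}$. By Definition \ref{gi1} write $A=e^{i\eta}\cdot|A|\otimes E^{i\psi}=(e^{i\eta}|a_{rs}|e^{i\psi_{rs}})$, and note that conjugating by a unitary diagonal matrix $D_\phi=\mathrm{diag}(e^{i\phi_1},\dots,e^{i\phi_n})$ replaces the $(r,s)$ entry by $e^{i\eta}|a_{rs}|e^{i(\psi_{rs}+\phi_s-\phi_r)}$. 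Hence the statement is equivalent to solvability of the phase equations $\psi_{rs}+\phi_s-\phi_r\equiv\pi\pmod{2\pi}$ for $r>s$ and $\psi_{rs}+\phi_s-\phi_r\equiv\psi+\pi\pmod{2\pi}$ for $r<s$, the diagonal being automatically correct since $\psi_{rr}=0$ by condition (4) of Definition \ref{gi0'}.

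For the \emph{necessity} I would, exactly as in Theorem \ref{thgi0}, fix $\phi_1\in\mathbb{R}$ arbitrarily and set $\phi_r=\psi_{r1}+\phi_1+(2k+1)\pi$ for $r=2,\dots,n$, using the first column of phases. Then $\phi_s-\phi_r\equiv\psi_{s1}-\psi_{r1}\pmod{2\pi}$ for $r,s\ge 2$ (the cases involving the index $1$ being immediate from condition (1)), so the conjugated $(r,s)$ entry acquires the phase $\psi_{rs}+\psi_{s1}-\psi_{r1}$. For a lower position ($r>s$, so $1<s<r$), condition (3) of Definition \ref{gi0'} applied to $\{1,s,r\}$ together with the pairing relation (1) forces this phase to equal $\pi$; for an upper position ($r<s$, so $1<r<s$), condition (2) applied to $\{1,r,s\}$ with (1) forces it to equal $\psi+\pi$. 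Thus the lower triangle becomes real negative and the upper triangle a common multiple of $e^{i\psi}$, producing exactly $e^{i\eta}\cdot[(|D_A|-|L_A|)-e^{i\psi}|U_A|]$.

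For the \emph{sufficiency} I would reverse the reading: from the assumed identity extract $\psi_{rs}=\phi_r-\phi_s+(2k+1)\pi$ for $r>s$ and $\psi_{rs}=\phi_r-\phi_s+\psi+(2k+1)\pi$ for $r<s$, with $\psi_{rr}=0$. Adding the equations for $(r,s)$ and $(s,r)$ gives $\psi_{rs}+\psi_{sr}=2k\pi+\psi$, which is (1). Eliminating $\phi_t$ from the three equations attached to a triple $\{r,s,t\}$ yields either (2) or (3), the dichotomy being governed solely by the relative order of $r,s,t$, equivalently by which of the pairs $(r,s),(r,t),(t,s)$ lie in the upper triangle: the three orderings for which the accumulated $\psi$-terms cancel reproduce (2), and the three for which they leave a residual $-\psi$ reproduce (3).

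The main obstacle is precisely this case analysis in the triple relations. In Theorem \ref{thgi0} the off-diagonal target was the single ray $-1$, so no splitting was needed; here the six orderings of a triple must be correctly distributed between the three orderings listed in condition (2) and the three in condition (3), and one must verify in each class that the $e^{i\psi}$ factors accumulate with the right sign. Once this bookkeeping is organized, each individual verification is the same one-line angle identity as in Theorem \ref{thgi0}, so no genuinely new difficulty arises; the companion result for $\phi$-ray matrices (Definition \ref{gi0'1}) then follows by the identical argument with the roles of the upper and lower triangles exchanged.
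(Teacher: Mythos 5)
Your proposal is correct and takes exactly the paper's route: the paper gives no separate argument for this theorem, stating only that it is established ``in the same method of proof as Theorem \ref{thgi0},'' which is precisely the adaptation you carry out (conjugating phases via $\phi_r=\psi_{r1}+\phi_1+(2k+1)\pi$ and sorting the triple relations by ordering). Your bookkeeping of the six orderings against conditions (2) and (3) of Definition \ref{gi0'} checks out in every case, so your write-up is in fact more complete than what the paper records.
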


\begin{theorem}\label{thgi0a2} Let a matrix $A=D_A-L_A-U_A=(a_{rs})\in \mathbb{C}^{n\times n}$
with $D_A=diag(a_{11}, a_{22},\cdots,a_{nn})$. Then $A\in
\mathscr{L}^{\phi}_n$ if and only if there exists an $n\times n$
unitary diagonal matrix $D$ such that
$D^{-1}AD=e^{i\eta}\cdot[(|D_A|-|U_A|])-e^{i\phi}|L_A|$ for $\eta\in
\mathbb{R}$.
\end{theorem}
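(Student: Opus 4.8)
The plan is to imitate the proof of Theorem \ref{thgi0} verbatim in structure, the only genuine novelty being that a $\phi$-ray pattern treats the strictly lower and strictly upper triangular parts asymmetrically, so the angle bookkeeping must respect the ordering of the indices. First I would record the reduction that drives both directions. Writing $A=e^{i\eta}|A|\otimes E^{i\chi}=(e^{i\eta}|a_{rs}|e^{i\chi_{rs}})$ as in Definition \ref{gi1}, the membership $A\in\mathscr{L}^{\phi}_n$ means precisely that the pattern angles $\chi_{rs}$ obey the four conditions of Definition \ref{gi0'1} (with the fixed scalar $\phi$). For a unitary diagonal $D=\mathrm{diag}(e^{i\varphi_1},\cdots,e^{i\varphi_n})$ the $(r,s)$ entry of $D^{-1}AD$ is $e^{i\eta}|a_{rs}|e^{i(\chi_{rs}+\varphi_s-\varphi_r)}$, so conjugation merely shifts each pattern angle by $\varphi_s-\varphi_r$. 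Comparing entrywise with the target $e^{i\eta}[(|D_A|-|U_A|)-e^{i\phi}|L_A|]$, the claimed identity $D^{-1}AD=e^{i\eta}[(|D_A|-|U_A|)-e^{i\phi}|L_A|]$ is equivalent to the phase equations $\chi_{rr}\equiv 0$, $\chi_{rs}+\varphi_s-\varphi_r\equiv\pi$ for $r<s$, and $\chi_{rs}+\varphi_s-\varphi_r\equiv\phi+\pi$ for $r>s$ (all mod $2\pi$). This equivalence is the backbone of the argument.

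For necessity I would define $D$ from the first row: set $\varphi_1$ arbitrary and $\varphi_r=\varphi_1+\pi-\chi_{1r}$ for $r\geq 2$, so that $\varphi_s-\varphi_r=\chi_{1r}-\chi_{1s}$ whenever $r,s\geq 2$. Then I verify the three phase equations. The diagonal one is condition 4. For an upper entry $r<s$: if $r=1$ it is immediate from the definition of $\varphi_s$; if $2\leq r<s$ one applies condition 3 of Definition \ref{gi0'1} to the triple $(1,r,s)$ (which lies in the pattern $1<r<s$) to get $\chi_{1s}-\chi_{1r}\equiv\chi_{rs}+\pi$, whence $\chi_{rs}+\varphi_s-\varphi_r\equiv\pi$. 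For a lower entry $r>s$: if $s=1$ one uses condition 1, namely $\chi_{r1}+\chi_{1r}\equiv\phi$; if $2\leq s<r$ one applies condition 2 to $(1,s,r)$ (pattern $1<s<r$) to get $\chi_{1s}-\chi_{1r}\equiv\chi_{rs}-\phi+\pi$, whence $\chi_{rs}+\varphi_s-\varphi_r\equiv\phi+\pi$. Thus the constructed $D$ conjugates $A$ into the desired form.

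For sufficiency I would run the reduction backwards: reading off the phase equations yields $\chi_{rr}\equiv 0$ (condition 4), $\chi_{rs}=\pi-\varphi_s+\varphi_r$ for $r<s$, and $\chi_{rs}=\phi+\pi-\varphi_s+\varphi_r$ for $r>s$. Condition 1 follows for $r<s$ by adding the upper form of $\chi_{rs}$ and the lower form of $\chi_{sr}$, the $\varphi$'s cancelling to leave $2\pi+\phi$. For conditions 2 and 3 I would substitute these closed forms into $\chi_{rs}-\chi_{rt}$ and into $\chi_{ts}$ for each listed ordering of the triple $r,s,t$; in every ordering the $\varphi$-dependence collapses to $\varphi_t-\varphi_s$, so only the $\phi$-terms need tracking. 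Writing $[pq]=1$ when $p>q$ and $0$ otherwise, the $\phi$-content of $\chi_{rs}-\chi_{rt}$ is $[rs]-[rt]$, while that of the right-hand side is $[ts]-1$ in condition 2 (its explicit $-\phi$) and $[ts]$ in condition 3; one then checks the purely combinatorial identities $[rs]-[rt]=[ts]-1$ across the three orderings of condition 2 and $[rs]-[rt]=[ts]$ across the three of condition 3. This last bookkeeping is exactly where the ordering case-split of Definition \ref{gi0'1} is forced and is the only place requiring care — it is the feature absent from the symmetric $\theta$-pattern of Theorem \ref{thgi0}, whose off-diagonal angles all carry the single shift $\pi$; everything else is the routine angle arithmetic of that earlier proof.
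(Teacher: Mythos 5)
Your proposal is correct and follows essentially the same route as the paper, which for this theorem gives no separate argument but simply appeals to ``the same method of proof as Theorem \ref{thgi0}'' (construct $D$ from the first row of pattern angles, then verify the phase identities entrywise); your writeup correctly fills in the only nontrivial new detail, namely the ordering-dependent case analysis forced by conditions 2 and 3 of Definition \ref{gi0'1}.
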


\begin{corollary}\label{thgi0'} $\mathscr{R}^{0}_n=\mathscr{U}^{0}_n=\mathscr{L}^{0}_n=\mathscr{U}^{\psi}_n\cap\mathscr{L}^{\phi}_n$.
\end{corollary}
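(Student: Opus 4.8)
The plan is to establish the chain of equalities in two stages: first the three equalities $\mathscr{R}^0_n=\mathscr{U}^0_n=\mathscr{L}^0_n$, and then the identification of this common set with the intersection $\mathscr{U}^{\psi}_n\cap\mathscr{L}^{\phi}_n$. For the first stage I would simply specialize the three characterization theorems at parameter value $0$. Setting $\theta=0$ in Theorem \ref{thgi0}, $\psi=0$ in Theorem \ref{thgi0a1}, and $\phi=0$ in Theorem \ref{thgi0a2}, the right-hand sides all collapse to the single expression $e^{i\eta}\cdot[|D_A|-(|L_A|+|U_A|)]$, since $|D_A|e^{i\cdot 0}=|D_A|$ and $e^{i\cdot 0}|U_A|=|U_A|$, $e^{i\cdot 0}|L_A|=|L_A|$. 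Hence each of $\mathscr{R}^0_n,\mathscr{U}^0_n,\mathscr{L}^0_n$ is exactly the set of matrices $A$ that are unitarily-diagonally similar to $e^{i\eta}[|D_A|-|L_A|-|U_A|]$ for some $\eta\in\mathbb{R}$, and the three sets coincide.

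For the second stage I would argue at the level of ray patterns, using Definition \ref{gi1}. The inclusion $\mathscr{R}^0_n\subseteq\mathscr{U}^{\psi}_n\cap\mathscr{L}^{\phi}_n$ is the easy direction: a $0$-ray pattern (a $\theta$-ray pattern with $\theta=0$) is, after matching the defining conditions, simultaneously a $\psi$-ray pattern with $\psi=0$ and a $\phi$-ray pattern with $\phi=0$, so every $0$-ray matrix lies in both $\mathscr{U}^{\psi}_n$ and $\mathscr{L}^{\phi}_n$. The substance lies in the reverse inclusion: I must show that if the ray pattern $E^{i\chi}$ of $A$ is simultaneously a $\psi$-ray pattern and a $\phi$-ray pattern, then necessarily $\psi\equiv\phi\equiv 0\pmod{2\pi}$, after which $E^{i\chi}$ satisfies the $0$-pattern conditions and $A\in\mathscr{R}^0_n=\mathscr{U}^0_n=\mathscr{L}^0_n$.

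The crux, and the main obstacle, is this forcing of $\psi\equiv\phi\equiv 0$, which I would extract from the three-index conditions (items 2 and 3 of Definitions \ref{gi0'} and \ref{gi0'1}). The decisive observation is that the additive shifts $-\psi$ and $-\phi$ sit on \emph{opposite} cyclic classes of index orderings in the two definitions. For a cyclically ordered triple, say $r<s<t$, the $\psi$-pattern relation $\chi_{rs}-\chi_{rt}=\chi_{ts}+(2k+1)\pi$ carries no shift while the $\phi$-pattern relation $\chi_{rs}-\chi_{rt}=\chi_{ts}-\phi+(2k'+1)\pi$ carries $-\phi$; equating the two forces $\phi\equiv 0\pmod{2\pi}$. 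For an anti-cyclically ordered triple, say $r<t<s$, the roles reverse, and equating the corresponding relations forces $\psi\equiv 0\pmod{2\pi}$. With $\psi\equiv\phi\equiv 0$, condition 1 in both definitions reduces to $\chi_{rs}+\chi_{sr}\equiv 0$, and together with $\chi_{rr}=0$ these are precisely the $\theta=0$ conditions of Definition \ref{gi0}; one checks by relabeling the triple $(r,s,t)$ that the single-equation form of the $0$-pattern reproduces both equations of item 2 of Definition \ref{gi0}.

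This three-index argument needs $n\ge 3$, where genuine triples exist; the degenerate cases $n=1,2$, for which the three-index forcing is vacuous and only conditions 1 and 4 survive, would need to be examined separately from the defining conditions and from the matrix forms of the first stage. I expect the bookkeeping of the congruences $\pmod{2\pi}$ across the six orderings, and the verification that the resulting $0$-pattern is simultaneously of $\theta$-, $\psi$- and $\phi$-type, to be the only genuinely delicate part; everything else reduces to substitution into the already-proved characterizations of Theorems \ref{thgi0}, \ref{thgi0a1} and \ref{thgi0a2}.
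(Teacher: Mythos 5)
Your first stage is exactly the paper's entire argument: its proof of the corollary is the one-line appeal to Theorems \ref{thgi0}, \ref{thgi0a1} and \ref{thgi0a2}, and specializing the parameter to $0$ does make the three right-hand sides coincide, giving $\mathscr{R}^0_n=\mathscr{U}^0_n=\mathscr{L}^0_n$. That part is fine.

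The second stage has a genuine gap, and in fact the inclusion you set out to prove, $\mathscr{U}^{\psi}_n\cap\mathscr{L}^{\phi}_n\subseteq\mathscr{R}^0_n$ with $\psi,\phi$ ranging freely, is false. Two things break in the three-index forcing. First, the ray pattern of $A$ in Definition \ref{gi1} is not unique: at positions where $a_{rs}=0$ the angle $\chi_{rs}$ is entirely unconstrained, and the decompositions witnessing $A\in\mathscr{U}^{\psi}_n$ and $A\in\mathscr{L}^{\phi}_n$ need not share a pattern, so you cannot equate the item-2/item-3 relations of Definitions \ref{gi0'} and \ref{gi0'1} for a single $E^{i\chi}$. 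What the two memberships actually yield, via Theorems \ref{thgi0a1} and \ref{thgi0a2}, is a unitary diagonal $E=\mathrm{diag}(e^{i\epsilon_1},\dots,e^{i\epsilon_n})$ with $\epsilon_s-\epsilon_r\equiv-\psi$ for every \emph{nonzero} upper entry $(r,s)$ and $\epsilon_s-\epsilon_r\equiv\phi$ for every \emph{nonzero} lower entry; the conclusion $\psi\equiv\phi\equiv 0\ (\mathrm{mod}\ 2\pi)$ follows only when the digraph of $A$ contains cycles of length at least $3$ supported on nonzero entries, i.e.\ your argument tacitly assumes the relevant $a_{rs},a_{rt},a_{ts}$ are all nonzero. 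Second, and decisively, for sparse matrices this fails and so does the statement: the tridiagonal matrix $A_n$ of (\ref{bs1}) in Section 6 satisfies $A_n\in\mathscr{U}^{\pi}_n\cap\mathscr{L}^{\pi}_n$ (the paper itself verifies this) yet $A_n\notin\mathscr{R}^{0}_n$, since $D_{A_n}$ is a positive diagonal matrix and Theorem \ref{th3.24} together with $\rho(H_{SGS}(A_{100}))=0.3497<1$ gives $D_{A_n}^{-1}A_n\notin\mathscr{R}^{0}_n$. So under your (natural) reading of the intersection the fourth equality is false for $n\geq 3$ as well as for $n=2$, where, as you note, the triple conditions are vacuous and a direct $2\times 2$ computation already produces counterexamples. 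The only reading under which the corollary survives is with $\psi=\phi=0$ fixed, in which case the fourth set is just $\mathscr{U}^{0}_n\cap\mathscr{L}^{0}_n$ and your second stage is unnecessary: everything reduces to your first stage.
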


\begin{proof} By Theorem \ref{thgi0}, Theorem \ref{thgi0a1} and
Theorem \ref{thgi0a2}, the proof is obtained immediately.
\end{proof}

\section{Convergence on Gauss-Seidel iterative methods}\label{main results-sec}
In numerical linear algebra, the Gauss-Seidel iterative method, also
known as the Liebmann method or the method of successive
displacement, is an iterative method used to solve a linear system
of equations. It is named after the German mathematicians Carl
Friedrich Gauss(1777-1855) and Philipp Ludwig von Seidel(1821-1896),
and is similar to the Jacobi method. Later, this iterative method
was developed as three iterative methods, i.e., the forward,
backward and symmetric Gauss-Seidel (FGS-, BGS- and SGS-) iterative
methods. Though these iterative methods can be applied to any matrix
with non-zero elements on the diagonals, convergence is only
guaranteed if the matrix is strictly or irreducibly diagonally
dominant matrix, Hermitian positive definite matrix and invertible
$H-$matrix. Some classic results on convergence on Gauss-Seidel
iterative methods as follows:

\begin{theorem}\label{le1}  {\rm (see \cite{kolotilina2003})}
Let $A\in SD_n\cup ID_n$. Then $\rho(H_{FGS})<1$, $\rho(H_{BGS})<1$
and $\rho(H_{SGS})<1$, where $H_{FGS},\ H_{BGS}$ and $H_{SGS}$ are
defined in (\ref{1j}), (\ref{1j'}) and (\ref{1jj}), respectively,
and therefore the sequence $\{x^{(i)}\}$ generated by FGS-, BGS- and
SGS-scheme (\ref{r5}), respectively, converges to the unique
solution of {\rm(\ref{r1})} for any choice of the initial guess
$x^{(0)}$.
\end{theorem}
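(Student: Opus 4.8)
The plan is to prove convergence of all three Gauss-Seidel schemes for $A \in SD_n \cup ID_n$ by showing that each iteration matrix has spectral radius strictly less than one. Since $H_{SGS} = H_{BGS}H_{FGS}$ and, more to the point, the backward case reduces to the forward case applied to $A^T$ (or to a reordering via the permutation $P$ that reverses the index order, which interchanges $L_A$ and $U_A$ while preserving membership in $SD_n \cup ID_n$), I would first establish the bound $\rho(H_{FGS}) < 1$ carefully and then obtain $\rho(H_{BGS}) < 1$ by the same argument applied to the reversed matrix; the SGS bound then follows from $\rho(H_{SGS}) \le \rho(H_{BGS})\,\rho(H_{FGS}) < 1$ once the two factors are controlled.

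For the forward case, the standard approach is a proof by contradiction on the eigenvalues of $H_{FGS} = (D_A - L_A)^{-1}U_A$. Suppose $\lambda$ is an eigenvalue with $|\lambda| \ge 1$, so that $\det\bigl((D_A - L_A)^{-1}U_A - \lambda I\bigr) = 0$, equivalently $\det\bigl(\lambda(D_A - L_A) - U_A\bigr) = 0$. The key step is to examine the matrix
\begin{equation}\label{eq:keystep}
M(\lambda) = \lambda(D_A - L_A) - U_A = \lambda D_A - \lambda L_A - U_A,
\end{equation}
and show it is nonsingular whenever $|\lambda| \ge 1$, which contradicts the determinant being zero. The idea is that the diagonal dominance of $A$ is inherited by $M(\lambda)$ in the appropriate sense: for each row $i$, the off-diagonal mass splits into a lower part (scaled by $|\lambda|$) and an upper part (unscaled), and since $|\lambda| \ge 1$ one checks
\begin{equation}\label{eq:domin}
|\lambda|\,|a_{ii}| \ge |\lambda|\sum_{j<i}|a_{ij}| + \sum_{j>i}|a_{ij}| \ge \sum_{j<i}|a_{ij}| + \sum_{j>i}|a_{ij}| = \sum_{j \neq i}|a_{ij}|,
\end{equation}
so $M(\lambda)$ is at least diagonally dominant. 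When $A \in SD_n$ the first inequality is strict for every $i$, giving $M(\lambda) \in SD_n$ and hence $M(\lambda)$ nonsingular, a contradiction.

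The main obstacle is the irreducible case $A \in ID_n$, where strict dominance holds only for some rows, so $M(\lambda)$ is merely diagonally dominant and I must argue it is still nonsingular. The clean route is to invoke Lemma \ref{l11}, which gives $SD_n \cup ID_n \subset H^I_n = GSD_n$, so $A$ is generalized strictly diagonally dominant; but it is more transparent to show that $M(\lambda)$ remains irreducible (its zero pattern off the diagonal matches that of $A$ up to the scalar $\lambda$, which does not create new zeros when $|\lambda|\ge 1$) and retains strict dominance in at least one row, placing $M(\lambda) \in ID_n \subset H^I_n$ and thus making it invertible. One must take a little care that scaling the strictly-lower part by $\lambda$ preserves irreducibility and does not destroy the row where strict inequality occurs; handling the boundary case $|\lambda| = 1$ is where the irreducibility hypothesis is genuinely used. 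Once nonsingularity of $M(\lambda)$ is secured for all $|\lambda| \ge 1$, the contradiction forces $\rho(H_{FGS}) < 1$, and the remaining assertions follow as outlined, so that the iterative scheme \eqref{r5} converges to the unique solution of \eqref{r1} for every initial guess $x^{(0)}$.
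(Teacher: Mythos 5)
The paper offers no proof of this theorem at all --- it is quoted as a known result from \cite{kolotilina2003} --- so your attempt can only be judged on its own merits. Your treatment of $H_{FGS}$ is sound and is the classical argument: if $|\lambda|\ge 1$ were an eigenvalue then $M(\lambda)=\lambda(D_A-L_A)-U_A$ would be singular, yet $M(\lambda)$ inherits strict (resp.\ irreducible) diagonal dominance from $A$ because $|\lambda|\,|a_{ii}|\ge|\lambda|\sum_{j<i}|a_{ij}|+\sum_{j>i}|a_{ij}|$, and the zero pattern and the row with strict inequality are preserved since $\lambda\neq 0$; this is exactly the device the paper itself uses in the $|\lambda|>1$ branch of the proof of Theorem \ref{the5}. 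The reduction of the BGS case to the FGS case via the order-reversing permutation (or transposition) is also fine.

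The SGS step, however, contains a genuine error: you conclude $\rho(H_{SGS})\le\rho(H_{BGS})\,\rho(H_{FGS})$, but the spectral radius is \emph{not} submultiplicative. For instance, with $N_1=\left[\begin{smallmatrix}0&1\\0&0\end{smallmatrix}\right]$ and $N_2=\left[\begin{smallmatrix}0&0\\1&0\end{smallmatrix}\right]$ one has $\rho(N_1)=\rho(N_2)=0$ while $\rho(N_1N_2)=1$, and the two Gauss--Seidel factors do not commute, so nothing rescues the inequality as stated. The standard repairs are: (i) replace spectral radii by a submultiplicative operator norm --- since $SD_n\cup ID_n\subset H_n^I=GSD_n$ (Lemma \ref{l11}), there is a positive diagonal $W$ with $AW\in SD_n$, and one shows $\|W^{-1}H_{FGS}W\|_\infty<1$ and $\|W^{-1}H_{BGS}W\|_\infty<1$, whence $\rho(H_{SGS})\le\|W^{-1}H_{BGS}W\|_\infty\,\|W^{-1}H_{FGS}W\|_\infty<1$; or (ii) argue directly as the paper does for Theorem \ref{th3.24}: if $|\lambda|\ge1$ is an eigenvalue of $H_{SGS}$, then $\lambda(D_A-U_A)-L_A(D_A-L_A)^{-1}U_A$ is singular, but it is the Schur complement (Lemma \ref{le3.2}) of the $2n\times 2n$ block matrix
\begin{equation*}
C=\left[\begin{array}{cc} D_A-L_A & -U_A\\ -L_A & \lambda(D_A-U_A)\end{array}\right],
\end{equation*}
which one checks lies in $SD_{2n}$ (resp.\ $ID_{2n}$) and is therefore nonsingular --- a contradiction. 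Without one of these substitutes, the third assertion of the theorem is unproved.
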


\begin{theorem}\label{le2}  {\rm (see \cite{Z.X19,zhang26})}
Let $A\in H^I_n$. Then $\rho(H_{FGS})<1$, $\rho(H_{BGS})<1$ and
$\rho(H_{SGS})<1$, where $H_{FGS},\ H_{BGS}$ and $H_{SGS}$ are
defined in (\ref{1j}), (\ref{1j'}) and (\ref{1jj}), respectively,
and therefore the sequence $\{x^{(i)}\}$ generated by FGS-, BGS- and
SGS-scheme (\ref{r5}), respectively, converges to the unique
solution of {\rm(\ref{r1})} for any choice of the initial guess
$x^{(0)}$.
\end{theorem}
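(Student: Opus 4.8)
The plan is to reduce the invertible $H$-matrix case to the strictly diagonally dominant case already settled in Theorem \ref{le1}, by exploiting a diagonal scaling together with the fact that the spectral radius of each Gauss--Seidel iteration matrix is invariant under diagonal similarity. First I would invoke Lemma \ref{l11}, which asserts $H^I_n=GSD_n$, so that $A$ is generalized strictly diagonally dominant. By Definition \ref{d2} there exist positive reals $\alpha_1,\dots,\alpha_n$ with $\alpha_i|a_{ii}|>\sum_{j\neq i}\alpha_j|a_{ij}|$ for every $i\in\langle n\rangle$; in particular each diagonal entry satisfies $|a_{ii}|>0$, so $D_A$, $D_A-L_A$ and $D_A-U_A$ are all invertible and the matrices $H_{FGS}$, $H_{BGS}$, $H_{SGS}$ are well defined. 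Put $X=\mathrm{diag}(\alpha_1,\dots,\alpha_n)$ with all $\alpha_i>0$, and set $\tilde A=X^{-1}AX$. A direct entrywise check shows $\tilde A\in SD_n$: its $(i,j)$ entry is $a_{ij}\alpha_j/\alpha_i$, so the $i$th row sum of off-diagonal moduli is $\alpha_i^{-1}\sum_{j\neq i}\alpha_j|a_{ij}|<|a_{ii}|$.

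Next I would track how the standard decomposition behaves under this similarity. Since $X$ is diagonal, $X^{-1}D_AX=D_A$, while $X^{-1}L_AX$ and $X^{-1}U_AX$ remain strictly lower and strictly upper triangular, respectively. Hence the splitting of $\tilde A$ has the same diagonal $D_A$ and triangular parts $L_{\tilde A}=X^{-1}L_AX$, $U_{\tilde A}=X^{-1}U_AX$. Because $X^{-1}(D_A-L_A)^{-1}X=(X^{-1}(D_A-L_A)X)^{-1}$, substituting into (\ref{1j}) gives $H_{FGS}(\tilde A)=X^{-1}H_{FGS}(A)X$, and the identical computation applied to (\ref{1j'}) gives $H_{BGS}(\tilde A)=X^{-1}H_{BGS}(A)X$. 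Multiplicativity of diagonal similarity then yields $H_{SGS}(\tilde A)=X^{-1}H_{SGS}(A)X$ from (\ref{1jj}). Consequently $\rho(H_\ast(A))=\rho(H_\ast(\tilde A))$ for $\ast\in\{FGS,BGS,SGS\}$, since similar matrices share their spectra.

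Finally, because $\tilde A\in SD_n$, Theorem \ref{le1} gives $\rho(H_{FGS}(\tilde A))<1$, $\rho(H_{BGS}(\tilde A))<1$ and $\rho(H_{SGS}(\tilde A))<1$; transferring through the equalities above yields the three strict bounds for $A$. The stated convergence of the FGS-, BGS- and SGS-schemes (\ref{r5}) to the unique solution of (\ref{r1}) for any initial guess then follows from the classical criterion that (\ref{r5}) converges if and only if the spectral radius of its iteration matrix is below $1$.

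I expect no serious obstacle here: the entire argument is elementary once the scaling is in place, and the only point requiring a little care is verifying that the diagonal similarity commutes with taking the inverse of the triangular factor and that it preserves the triangular structure of $L_A$ and $U_A$ — a routine but essential bookkeeping step, especially for the product form $H_{SGS}=H_{BGS}H_{FGS}$.
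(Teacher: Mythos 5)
Your argument is correct. The paper itself gives no proof of this theorem (it is quoted from the cited references), and your reduction — using Lemma \ref{l11} to write $A\in GSD_n$, conjugating by the positive diagonal matrix $X=\mathrm{diag}(\alpha_1,\dots,\alpha_n)$ so that $\tilde A=X^{-1}AX\in SD_n$, checking that the similarity respects the splitting $D_A-L_A-U_A$ and hence conjugates $H_{FGS}$, $H_{BGS}$ and $H_{SGS}$, and then invoking Theorem \ref{le1} — is exactly the standard route to this result; the one point that genuinely needed care, namely using the two-sided similarity $X^{-1}AX$ (rather than the one-sided scaling $AX$ implicit in Definition \ref{d2}) so that spectra are preserved, you handle correctly.
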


\begin{theorem} \label{lemma8}{\rm (see \cite{Z.X19,Z.X22})}
Let $A\in \mathbb{C}^{n\times n}$ be a Hermitian positive definite
matrix. Then $\rho(H_{FGS})<1$, $\rho(H_{BGS})<1$ and
$\rho(H_{SGS})<1$, where $H_{FGS},\ H_{BGS}$ and $H_{SGS}$ are
defined in (\ref{1j}), (\ref{1j'}) and (\ref{1jj}), respectively,
and therefore the sequence $\{x^{(i)}\}$ generated by FGS-, BGS- and
SGS-scheme (\ref{r5}), respectively, converges to the unique
solution of {\rm(\ref{r1})} for any choice of the initial guess
$x^{(0)}$.
\end{theorem}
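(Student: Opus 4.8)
The plan is to prove all three statements at once through a single contraction estimate in the \emph{energy norm} $\|x\|_A = (x^*Ax)^{1/2}$, which is a genuine vector norm precisely because $A$ is Hermitian positive definite. First I would record the two structural facts that make the Hermitian case special: since $a_{ii}=e_i^*Ae_i>0$, the diagonal $D_A$ is a real positive (hence positive definite) matrix, and since $A=A^*$ we have $U_A=L_A^*$, so that $(D_A-L_A)^*=D_A-U_A$ and vice versa. These are the only properties of positive definiteness I will actually need beyond the existence of $\|\cdot\|_A$.

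The core is the classical Householder--John device. For any regular splitting $A=M-N$ with $M$ nonsingular, the iteration matrix is $H=M^{-1}N=I-M^{-1}A$. Given an error vector $e$, set $y=M^{-1}Ae$, so that $He=e-y$ and $Ae=My$. Expanding $\|e\|_A^2-\|He\|_A^2=e^*Ae-(e-y)^*A(e-y)$ and using $A^*=A$ together with $Ae=My$ yields the identity
\[
\|e\|_A^2-\|He\|_A^2 = y^*\bigl(M+M^*-A\bigr)\,y .
\]
Because $M$ and $A$ are nonsingular, $y=0$ only when $e=0$; hence if $M+M^*-A$ is positive definite then $\|He\|_A<\|e\|_A$ for every nonzero $e$. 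Compactness of the unit sphere in $\mathbb{C}^n$ then upgrades this to a strict operator-norm bound $\|H\|_A<1$, and therefore $\rho(H)\le\|H\|_A<1$.

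It remains to apply this to the two one-sided sweeps. For the forward sweep take $M=D_A-L_A$ and $N=U_A$; then $M^*=D_A-U_A$ and
\[
M+M^*-A=(D_A-L_A)+(D_A-U_A)-(D_A-L_A-U_A)=D_A,
\]
which is positive definite, so $\|H_{FGS}\|_A<1$ and $\rho(H_{FGS})<1$. The backward sweep is identical with $M=D_A-U_A$: again $M+M^*-A=D_A$, giving $\|H_{BGS}\|_A<1$ and $\rho(H_{BGS})<1$. For the symmetric method I would simply exploit that $\|\cdot\|_A$ is the matrix norm induced by the energy vector norm and is therefore submultiplicative: from $H_{SGS}=H_{BGS}H_{FGS}$ we get $\|H_{SGS}\|_A\le\|H_{BGS}\|_A\,\|H_{FGS}\|_A<1$, whence $\rho(H_{SGS})<1$.

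The only genuinely delicate point is the passage from the pointwise strict inequality $\|He\|_A<\|e\|_A$ to a uniform operator bound $\|H\|_A<1$; I expect this to be the step most in need of care, and it is exactly where finite dimensionality (compactness of the unit sphere) and the nonsingularity of $M$ enter. Everything else is the algebraic verification $M+M^*-A=D_A$ for each sweep, which is immediate from $U_A=L_A^*$, and the routine submultiplicativity argument for the symmetric case. The conclusion about convergence of the sequence $\{x^{(i)}\}$ to the unique solution of \eqref{r1} then follows from the standard criterion $\rho(H)<1$ cited after \eqref{r5}.
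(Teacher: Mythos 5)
Your proof is correct and complete. Note that the paper does not actually prove this statement: Theorem \ref{lemma8} is quoted as a known classical result with a pointer to \cite{Z.X19,Z.X22}, so there is no internal argument to compare against. What you have written is the standard Householder--John (Ostrowski--Reich) proof, and every step checks out: the identity $\|e\|_A^2-\|He\|_A^2=y^*(M+M^*-A)y$ follows from $Ae=My$ and $A=A^*$; the injectivity of $e\mapsto y=M^{-1}Ae$ is guaranteed by the nonsingularity of $M$ (lower or upper triangular with positive diagonal $a_{ii}=e_i^*Ae_i>0$) and of $A$; the compactness of the $\|\cdot\|_A$-unit sphere correctly upgrades the pointwise strict decrease to $\|H\|_A<1$; the verification $M+M^*-A=D_A$ for both sweeps uses exactly $U_A=L_A^*$; and submultiplicativity of the induced norm disposes of $H_{SGS}=H_{BGS}H_{FGS}$ without further work. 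This gives a self-contained proof of a result the paper leaves to the references, which is a genuine plus.
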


Following, we consider convergence on Gauss-Seidel iterative methods
for general $H-$matrices. Let us investigate the case of nonstrictly
diagonally dominant matrices. By Lemma \ref{l9} and Theorem
\ref{le2}, the following conclusion is obtained.

\begin{theorem} \label{lemma8'}
Let $A\in D_n(GD_n)$. Then $\rho(H_{FGS})<1$, $\rho(H_{BGS})<1$ and
$\rho(H_{SGS})<1$, where $H_{FGS},\ H_{BGS}$ and $H_{SGS}$ are
defined in (\ref{1j}), (\ref{1j'}) and (\ref{1jj}), respectively,
i.e., the sequence $\{x^{(i)}\}$ generated by FGS-, BGS- and
SGS-scheme (\ref{r5}), respectively, converges to the unique
solution of {\rm(\ref{r1})} for any choice of the initial guess
$x^{(0)}$ if and only if $A$ has no (generalized) diagonally
equipotent principal submatrices.
\end{theorem}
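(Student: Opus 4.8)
The plan is to treat the two directions separately and to route everything through the dichotomy of Lemma~\ref{l9}: for $A\in D_n(GD_n)$ one has $A\in H^I_n$ precisely when $A$ has no (generalized) diagonally equipotent principal submatrix. The sufficiency is then immediate, for if $A$ has no such submatrix, Lemma~\ref{l9} gives $A\in H^I_n$ and Theorem~\ref{le2} delivers $\rho(H_{FGS})<1$, $\rho(H_{BGS})<1$ and $\rho(H_{SGS})<1$ simultaneously. Thus the whole content sits in the necessity, which I would prove by contraposition: assuming $A$ carries a (generalized) diagonally equipotent principal submatrix $A(\alpha)$, the goal is to exhibit a unimodular eigenvalue of at least one of the three iteration matrices, so that the corresponding spectral radius is at least $1$ and that method fails to converge.

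The first reduction uses the reducibility forced by Remark~\ref{remark1}. Since $A$ is then nonstrictly diagonally dominant and $A(\alpha)$ is (generalized) diagonally equipotent, $A(\alpha,\alpha')=0$. In the $\alpha/\alpha'$ block partition this makes $A$, and therefore $D_A-L_A$ and $D_A-U_A$ together with their inverses, block lower triangular; consequently the $(\alpha,\alpha')$ blocks of $L_A$ and $U_A$ vanish, the $(\alpha,\alpha)$ blocks of $H_{FGS},H_{BGS},H_{SGS}$ coincide exactly with the forward, backward and symmetric Gauss--Seidel matrices of $A(\alpha)$, and their $(\alpha,\alpha')$ blocks are zero. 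Hence $\sigma(H^{A(\alpha)}_{\bullet})\subseteq\sigma(H^{A}_{\bullet})$ and $\rho(H^{A}_{\bullet})\ge\rho(H^{A(\alpha)}_{\bullet})$ for each of the three splittings, so it suffices to work with $A(\alpha)$ alone; note that because $A(\alpha,\alpha')=0$ respects the natural ordering, no permutation of the variables is needed and the argument does not run into the fact that Gauss--Seidel is order dependent.

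Next I would pass to a principal submatrix $B$ that is minimal among the (generalized) diagonally equipotent ones and argue that $B$ is irreducible. The positive vector $\alpha>0$ with $\mu(B)\alpha=0$ is exactly the (generalized) equipotence relation, and putting $B$ in Frobenius normal form (cf. Lemma~\ref{0.1.8}) the bottom diagonal block annihilates the corresponding positive subvector, hence is itself (generalized) diagonally equipotent; were $B$ reducible this would be a proper such submatrix, contradicting minimality. If $B$ is $1\times1$ then $B=(0)$ by Remark~\ref{remark1}, the iteration matrices are undefined and nonconvergence is trivial. For irreducible $B$ of order at least two I would invoke the ray machinery: $\det\mu(B)=0$ forces the nonnegative matrix $(|D_B|-|L_B|)^{-1}|U_B|$ to have Perron value $1$. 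If $B$ is a $\psi$-ray matrix, Theorem~\ref{thgi0a1} conjugates $B$ by a unitary diagonal $D$ and a scalar $e^{i\eta}$, operations that preserve $\sigma(H_{FGS})$, into $e^{i\eta}[(|D_B|-|L_B|)-e^{i\psi}|U_B|]$, whose forward Gauss--Seidel matrix is $e^{i\psi}(|D_B|-|L_B|)^{-1}|U_B|$; the Perron value $1$ is then carried to the unimodular eigenvalue $e^{i\psi}$, so $\rho(H_{FGS})\ge1$. Theorem~\ref{thgi0a2} yields $\rho(H_{BGS})\ge1$ symmetrically when $B$ is a $\phi$-ray matrix.

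The step I expect to be the main obstacle is closing this last case: to finish the contrapositive I must show that an irreducible (generalized) diagonally equipotent matrix necessarily realizes the $\psi$-ray or the $\phi$-ray pattern of Definitions~\ref{gi0'}--\ref{gi0'1} (or else forces $\rho(H_{SGS})\ge1$ outright). This is a purely combinatorial phase-alignment problem over the cycles of the strongly connected digraph of $B$, and it is delicate precisely because $\rho(H_{\bullet})\le1$ always holds for such $B$ while the complex spectrum may shrink through cancellation of phases; ruling out that cancellation, equivalently proving that the ray-pattern alternative is exhaustive, is where the genuine difficulty lies, and it is the point on which the whole necessity direction stands or falls.
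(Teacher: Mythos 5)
Your sufficiency argument (no equipotent principal submatrices $\Rightarrow A\in H^I_n$ by Lemma~\ref{l9} $\Rightarrow$ all three spectral radii below $1$ by Theorem~\ref{le2}) is exactly the paper's argument; indeed that one-line deduction is the paper's \emph{entire} proof, and the paper never addresses the necessity at all. So the part of your proposal that goes beyond the paper is precisely the part you flag as the obstacle, and your instinct there is correct in the strongest possible sense: the gap cannot be closed, because the ``only if'' direction is false as stated. The paper's own Example~\ref{ex2} supplies the counterexample: its $3\times 3$ matrix $A$ (with diagonal entries $2$ and off-diagonal entries of modulus $1$) is irreducible and lies in $DE_3$, hence is itself a diagonally equipotent principal submatrix (and, consistently with Lemma~\ref{l9}, $\mu(A)$ is a singular $M$-matrix, so $A\notin H^I_3$), yet $\rho(H_{FGS})=0.4215$, $\rho(H_{BGS})=0.3536$ and $\rho(H_{SGS})=0.3608$ are all below $1$. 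In the language of your last paragraph: an irreducible (generalized) diagonally equipotent matrix need \emph{not} realize the $\psi$-ray, $\phi$-ray or $\theta$-ray pattern, so the ``phase cancellation'' you were worried about genuinely occurs. The correct characterizations for irreducible equipotent blocks are Theorems~\ref{the5}, \ref{the5'} and \ref{th3.24}, and the genuine necessary-and-sufficient conditions are the strictly weaker ones recorded later in Theorems~\ref{the6a1}--\ref{the6a3} (no $2\times 2$ irreducible equipotent principal submatrix and no irreducible equipotent principal block lying in the relevant ray class), which would be redundant if the present theorem's equivalence were true.

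Two smaller points. Your block-triangular reduction is sound: if $A(\alpha,\alpha')=0$ then $\det\bigl(\lambda(D_A-L_A)-U_A\bigr)$ factors over the blocks, so $\sigma\bigl(H^{A(\alpha)}_{FGS}\bigr)\subseteq\sigma\bigl(H^{A}_{FGS}\bigr)$ and likewise for BGS and SGS; this is essentially the mechanism the paper itself uses in Theorems~\ref{the6a1a}--\ref{the6a3a}. It simply cannot deliver a unimodular eigenvalue here, because the minimal irreducible equipotent block need not contribute one. You should therefore read the theorem as asserting only its sufficiency half (which both you and the paper prove correctly from Lemma~\ref{l9} and Theorem~\ref{le2}) and not spend further effort on the contrapositive.
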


Theorem \ref{lemma8'} indicates that studying convergence on
Gauss-Seidel iterative methods for nonstrictly diagonally dominant
matrices only investigates the case of (generalized) diagonally
equipotent matrices. Continuing in this direction, a lemma will be
introduced firstly to be used in this section.

\begin{lemma}\label{le2'} {\rm (see \cite{kolotilina2003,Z.X19})} Let an irreducible matrix $A\in D_n(GD_n)$.
Then $A$ is singular if and only if $D^{-1}_AA\in DE_n(GDE_n)\cap
\mathscr{R}^{0}_n$, where $D_A=diag(a_{11}, \cdots,a_{nn})$.
\end{lemma}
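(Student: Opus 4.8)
The plan is to strip everything down to the unit-diagonal, row-dominant case and then run a maximum-modulus argument. Write $B=D_A^{-1}A$; since $A$ is irreducible and diagonally dominant with $n\ge 2$, every $a_{ii}\ne 0$ (a zero diagonal entry would force a zero row, contradicting irreducibility), so $B$ is well defined, has all diagonal entries equal to $1$, and inherits (generalized) diagonal dominance from $A$. Because $D_A$ is nonsingular, $A$ is singular exactly when $B$ is, and I will work with $B$ throughout. For the $GD_n$ case I first reduce to the $D_n$ case: if the $\alpha_i>0$ are the weights from Definition \ref{d2} and $P=\mathrm{diag}(\alpha_1,\dots,\alpha_n)$, then $AP\in D_n$, the matrix $AP$ is irreducible and singular precisely when $A$ is, one has $D_{AP}^{-1}(AP)=P^{-1}BP$, and conjugation by the positive diagonal $P$ preserves the ray pattern while converting generalized diagonal equipotence of $B$ into ordinary diagonal equipotence of $P^{-1}BP$. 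Thus it suffices to prove the equivalence for $B\in D_n$ with unit diagonal.

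For the easy direction ($\Leftarrow$), suppose $B\in DE_n\cap\mathscr{R}^0_n$. By Theorem \ref{thgi0} with $\theta=0$ there is a unitary diagonal $D$ and $\eta\in\mathbb{R}$ with $D^{-1}BD=e^{i\eta}\,\mu(B)$. Since $B$ is diagonally equipotent with $|b_{ii}|=1$, every row sum of $\mu(B)$ equals $1-\sum_{j\ne i}|b_{ij}|=0$, so $\mu(B)\mathbf{1}=0$ for the all-ones vector $\mathbf 1$, whence $\mu(B)$ is singular; therefore $D^{-1}BD$, and hence $B$, is singular.

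The hard direction ($\Rightarrow$) is the main obstacle. Assume $B$ is singular and pick $x\ne 0$ with $Bx=0$, normalized so that $\max_i|x_i|=1$. For any index $i$ with $|x_i|=1$, the identity $x_i=-\sum_{j\ne i}b_{ij}x_j$ together with diagonal dominance gives the chain $1=|x_i|\le\sum_{j\ne i}|b_{ij}|\,|x_j|\le\sum_{j\ne i}|b_{ij}|\le 1$, forcing equality throughout: the row is equipotent ($\sum_{j\ne i}|b_{ij}|=1$) and $|x_j|=1$ for every $j$ with $b_{ij}\ne 0$. Hence the set $S=\{i:|x_i|=1\}$ is nonempty and closed under the out-neighbour relation of the directed graph of $B$, and irreducibility (strong connectivity) forces $S=\langle n\rangle$. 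Consequently $|x_i|=1$ and row $i$ is equipotent for every $i$, so $B\in DE_n$.

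It remains to produce the ray structure. Writing $x_j=e^{i\gamma_j}$, equality in the triangle inequality at each row means the nonzero terms $b_{ij}x_j$ all share a common argument; since their sum is $-x_i$, that argument is $\gamma_i+\pi$, whence $\arg b_{ij}\equiv\gamma_i-\gamma_j+\pi\pmod{2\pi}$ whenever $b_{ij}\ne 0$. Setting $D=\mathrm{diag}(e^{i\gamma_1},\dots,e^{i\gamma_n})$, a direct computation gives $(D^{-1}BD)_{ij}=-|b_{ij}|$ for $i\ne j$ and $(D^{-1}BD)_{ii}=1$, i.e. $D^{-1}BD=\mu(B)$; by Theorem \ref{thgi0} (with $\eta=\theta=0$) this says exactly that $B\in\mathscr{R}^0_n$. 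The delicate points are the graph-theoretic propagation that upgrades equipotence at a single row to all rows, and the bookkeeping of the phases $\gamma_j$ so that one unitary diagonal $D$ simultaneously realizes the $0$-ray normal form for every entry.
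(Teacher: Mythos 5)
The paper itself offers no proof of Lemma \ref{le2'}: it is quoted from \cite{kolotilina2003,Z.X19}, so there is no internal argument to compare against. Your proof is a correct, self-contained derivation along the classical Taussky/maximum-modulus lines, which is essentially what the cited sources do. The chain $1=|x_i|\le\sum_{j\ne i}|b_{ij}|\,|x_j|\le\sum_{j\ne i}|b_{ij}|\le 1$ at a maximal coordinate, the propagation of $S=\{i:|x_i|=1\}$ along the strongly connected digraph of $B$, and the extraction of the unitary diagonal $D=\mathrm{diag}(e^{i\gamma_1},\dots,e^{i\gamma_n})$ from the equality case of the triangle inequality are all sound, and your verification that $D^{-1}BD=\mu(B)$ is exactly the criterion of Theorem \ref{thgi0} (with $\eta=\theta=0$) for membership in $\mathscr{R}^{0}_n$. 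One point to tighten: in the reduction of the $GD_n$ case to the $D_n$ case, the ``if'' direction implicitly assumes that the weights witnessing generalized equipotence of $B$ may be taken equal to the dominance weights $\alpha$ of $A$. This is true for irreducible matrices --- if $\mu(B)\beta=0$ with $\beta>0$ and $\mu(B)\alpha\ge 0$ with $\alpha>0$, then Perron--Frobenius subinvariance forces $\mu(B)\alpha=0$ --- but it deserves a line; alternatively, run the easy direction directly in the $GDE_n$ case by replacing the all-ones vector $\mathbf{1}$ with the weight vector $\beta$, so that $\mu(B)\beta=0$ gives singularity of $\mu(B)$ and hence of $B=e^{i\eta}D\mu(B)D^{-1}$, with no reduction needed.
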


\begin{theorem}\label{le3} Let an irreducible matrix $A=(a_{ij})\in
GDE_2$. Then $\rho(H_{FGS})=\rho(H_{BGS})=\rho(H_{SGS})=1$, where
$H_{FGS},\ H_{BGS}$ and $H_{SGS}$ are defined in (\ref{1j}),
(\ref{1j'}) and (\ref{1jj}), respectively, and therefore the
sequence $\{x^{(i)}\}$ generated by FGS-, BGS- and SGS-scheme
(\ref{r5}), respectively, doesn't converge to the unique solution of
{\rm(\ref{r1})} for any choice of the initial guess $x^{(0)}$.
\end{theorem}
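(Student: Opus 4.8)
The plan is to directly analyze the $2\times 2$ case, since for $n=2$ everything is explicit and the Gauss-Seidel iteration matrices have a single nonzero eigenvalue that I can compute by hand. Let $A=(a_{ij})\in GDE_2$ be irreducible. By Definition~\ref{d2}, generalized diagonal equipotence for a $2\times 2$ matrix means there exist positive constants $\alpha_1,\alpha_2$ with $\alpha_1|a_{11}|=\alpha_2|a_{12}|$ and $\alpha_2|a_{22}|=\alpha_1|a_{21}|$. Multiplying these gives $|a_{11}||a_{22}|=|a_{12}||a_{21}|$, i.e.\ $|\det(D_A)|=|a_{12}a_{21}|$. Irreducibility forces $a_{12}\neq 0$ and $a_{21}\neq 0$ (otherwise $A$ would be triangular, hence reducible), and the equipotence relations then force $a_{11}\neq 0$ and $a_{22}\neq 0$ as well, so $D_A$ is invertible and all four entries are nonzero.

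First I would write out the two triangular solves explicitly. With $A=D_A-L_A-U_A$ where $L_A=\begin{bmatrix}0&0\\-a_{21}&0\end{bmatrix}$ and $U_A=\begin{bmatrix}0&-a_{12}\\0&0\end{bmatrix}$, the forward Gauss-Seidel matrix $H_{FGS}=(D_A-L_A)^{-1}U_A$ is lower-triangular-inverse times strictly-upper, which for $n=2$ has the form $\begin{bmatrix}0 & -a_{12}/a_{11}\\ 0 & a_{12}a_{21}/(a_{11}a_{22})\end{bmatrix}$. Its eigenvalues are $0$ and the $(2,2)$ entry $a_{12}a_{21}/(a_{11}a_{22})$; hence $\rho(H_{FGS})=|a_{12}a_{21}|/(|a_{11}||a_{22}|)=1$ by the equipotence identity above. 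The same computation for $H_{BGS}=(D_A-U_A)^{-1}L_A$ yields eigenvalues $0$ and $a_{12}a_{21}/(a_{11}a_{22})$ as well, so $\rho(H_{BGS})=1$. Then $H_{SGS}=H_{BGS}H_{FGS}$, and since each factor has its nonzero eigenvalue equal to $a_{12}a_{21}/(a_{11}a_{22})$, a short direct multiplication shows the nonzero eigenvalue of the product is $\bigl(a_{12}a_{21}/(a_{11}a_{22})\bigr)^2$, whose modulus is again $1$; thus $\rho(H_{SGS})=1$.

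With all three spectral radii equal to $1$, I would invoke the standard fact (cited after~(\ref{r5}), from~\cite{R.S.15}) that the iterative scheme~(\ref{r5}) converges for every initial guess $x^{(0)}$ if and only if $\rho(H)<1$. Since $\rho(H_{FGS})=\rho(H_{BGS})=\rho(H_{SGS})=1\not<1$, none of the three schemes converges, which is exactly the asserted conclusion.

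The only real subtlety — and the step I would treat most carefully — is the bookkeeping for $H_{SGS}$: I want to confirm that the nonzero eigenvalue of the product $H_{BGS}H_{FGS}$ is genuinely the product of the nonzero eigenvalues rather than something that could cancel to a smaller modulus. Rather than rely on a general eigenvalue-multiplication heuristic, I would simply form the $2\times 2$ product $H_{BGS}H_{FGS}$ entrywise and read off $\rho$ from its characteristic polynomial; for $2\times2$ matrices this is a one-line calculation and removes any ambiguity. Alternatively, one can cite Lemma~\ref{le2'} to note that $D_A^{-1}A\in GDE_2\cap\mathscr{R}^0_2$ reflects the singularity structure underlying all three identities, but the explicit computation is cleaner and self-contained, so that is the route I would take.
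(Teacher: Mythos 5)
Your proposal is correct and follows essentially the same route as the paper: derive $|a_{11}a_{22}|=|a_{12}a_{21}|$ from the equipotence relations and then compute the three $2\times 2$ iteration matrices explicitly, reading off the single nonzero eigenvalue of each. One small slip worth noting: if you actually carry out the entrywise product you propose, the nonzero eigenvalue of $H_{SGS}=H_{BGS}H_{FGS}$ comes out to be $a_{12}a_{21}/(a_{11}a_{22})$ itself (the trace of the product is $(-a_{21}/a_{22})(-a_{12}/a_{11})$), not the square of that quantity as you assert -- but since this number has modulus $1$, the conclusion $\rho(H_{SGS})=1$ is unaffected and agrees with the paper's stated value $|a_{12}a_{21}|/|a_{11}a_{22}|$.
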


\begin{proof}
Assume $A=\left[\begin{array}{cc}
\ a_{11} &\ a_{12}\\
a_{21} &\ a_{22}
\end{array}
\right]\in GDE_2.$ By Definition \ref{d2},
$\alpha_1|a_{11}|=\alpha_2|a_{12}|$ and
$\alpha_2|a_{22}|=\alpha_1|a_{21}|$ with $a_{ij}\neq0$ and
$\alpha_i>0$ for all $i,j=1,2$. Consequently, $A\in GDE_2$ if and
only if $|a_{12}a_{21}|/|a_{11}a_{11}|=1$. Direct computations give
that
$\rho(H_{FGS})=\rho(H_{BGS})=\rho(H_{SGS})=|a_{12}a_{21}|/|a_{11}a_{22}|=1$
and consequently, the sequence $\{x^{(i)}\}$ generated by FGS-, BGS-
and SGS-scheme (\ref{r5}), respectively, doesn't converge to the
unique solution of {\rm(\ref{r1})} for any choice of the initial
guess $x^{(0)}$.
\end{proof}

\begin{lemma}\label{le4} Let $A=(a_{ij})\in DE_n\ (n\geq3)$ be irreducible.
Then $e^{i\psi}$ is an eigenvalue of $H_{FGS}$ if and only if
$D_{A}^{-1}A\in \mathscr{U}^{\psi}_n$, where
$D_{A}=diag(a_{11},a_{22},\cdots,a_{nn})$ and $\psi\in \mathbb{R}.$
\end{lemma}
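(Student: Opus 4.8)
The plan is to reduce the eigenvalue question to a singularity question for a one–parameter family of matrices, then invoke the singularity criterion for irreducible diagonally equipotent matrices (Lemma \ref{le2'}) and match the resulting ray–pattern condition against the definition of $\mathscr{U}^{\psi}_n$.

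First I would note that since $A\in DE_n$ is irreducible with $n\geq 3$, every row has a nonzero off-diagonal entry, so diagonal equipotence forces $a_{ii}\neq 0$ for all $i$; hence $D_A-L_A$ is lower triangular with nonzero diagonal and $H_{FGS}=(D_A-L_A)^{-1}U_A$ is well defined. Then $e^{i\psi}$ is an eigenvalue of $H_{FGS}$ precisely when $(e^{i\psi}(D_A-L_A)-U_A)x=0$ has a nontrivial solution, i.e.\ when $M:=e^{i\psi}(D_A-L_A)-U_A$ is singular.

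Second I would establish two structural facts about $M$. Since $|e^{i\psi}|=1$, multiplying the diagonal and strictly lower parts of $A$ by $e^{i\psi}$ changes neither the moduli of the entries nor the zero pattern; therefore $M$ has the same entrywise moduli as $A$, so $M\in DE_n$, and $M$ inherits the irreducibility of $A$. Consequently Lemma \ref{le2'} applies: writing $D_M=e^{i\psi}D_A$, the matrix $M$ is singular if and only if $D_M^{-1}M\in DE_n\cap\mathscr{R}^{0}_n$. A short computation gives $D_M^{-1}M=I-D_A^{-1}L_A-e^{-i\psi}D_A^{-1}U_A$, whose diagonal entries are $1$ and whose off-diagonal row sums of moduli equal those of $D_A^{-1}A$, namely $1$; thus $D_M^{-1}M$ already lies in $DE_n$ automatically, and the criterion collapses to $D_M^{-1}M\in\mathscr{R}^{0}_n$.

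Third I would match two conjugation characterizations. Setting $B=D_A^{-1}A=I-L_B-U_B$ with $L_B=D_A^{-1}L_A$ and $U_B=D_A^{-1}U_A$, I would apply Theorem \ref{thgi0a1} to $B$: membership $B\in\mathscr{U}^{\psi}_n$ is equivalent to the existence of a unitary diagonal $D$ with $D^{-1}BD=e^{i\eta}[(I-|L_B|)-e^{i\psi}|U_B|]$; comparing the unit diagonals forces $e^{i\eta}=1$, and comparing the strictly lower and strictly upper parts reduces this to the pair of conditions $D^{-1}L_BD=|L_B|$ and $D^{-1}U_BD=e^{i\psi}|U_B|$. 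Applying Theorem \ref{thgi0} (with $\theta=0$) to $C:=D_M^{-1}M=I-L_B-e^{-i\psi}U_B$ in the same way, and again using the unit diagonal to pin the scalar phase to $1$, membership $C\in\mathscr{R}^{0}_n$ reduces to $D^{-1}L_BD=|L_B|$ and $e^{-i\psi}D^{-1}U_BD=|U_B|$, i.e.\ exactly the same pair of conditions. Hence $C\in\mathscr{R}^{0}_n$ if and only if $B\in\mathscr{U}^{\psi}_n$, and chaining the equivalences shows that $e^{i\psi}$ is an eigenvalue of $H_{FGS}$ if and only if $D_A^{-1}A\in\mathscr{U}^{\psi}_n$. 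I expect the main obstacle to be the bookkeeping in this last step: one must check that the unitary diagonal matrix realizing the $\psi$-ray structure of $D_A^{-1}A$ is the very same matrix that realizes the $0$-ray structure of $D_M^{-1}M$, and that the overall phase $e^{i\eta}$ is forced to be $1$ by the unit diagonals in both theorems; the first two steps are routine once the singularity reformulation and the invariance of moduli under multiplication by $e^{i\psi}$ are observed.
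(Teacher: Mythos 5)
Your proposal is correct and follows essentially the same route as the paper: both reduce the eigenvalue question to the singularity of $e^{i\psi}(D_A-L_A)-U_A$, invoke Lemma \ref{le2'} for irreducible diagonally equipotent matrices, and translate between $\mathscr{R}^{0}_n$ and $\mathscr{U}^{\psi}_n$ via the unitary-diagonal-conjugation characterizations of Theorems \ref{thgi0} and \ref{thgi0a1}. The only cosmetic difference is that you run both directions through one chain of equivalences, whereas the paper's sufficiency half instead computes $\det(e^{i\psi}I-H_{FGS})$ explicitly and reduces it to $\det\mu(D_A^{-1}A)=0$; the underlying ingredients are identical.
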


\begin{proof} We prove the sufficiency firstly. Since $A=(a_{ij})\in DE_n$ is irreducible,
$a_{ii}\neq0$ for all $i\in \langle n\rangle$. Thus,
$(D_{A}+L_A)^{-1}$ exists, and consequently, $H_{FGS}$ also exists,
where $D_{A}=diag(a_{11},a_{22},\cdots,a_{nn})$. Assume
$D_{A}^{-1}A\in \mathscr{U}^{\psi}_n$. Theorem \ref{thgi0a1} shows
that there exists an unitary diagonal matrix $D$ such that
$D^{-1}(D_{A}^{-1}A)D=[(I-|D_{A}^{-1}L_A|)-e^{i\psi}|D_{A}^{-1}U_A|]$
for $\psi\in \mathbb{R}$. Hence,
$D_{A}^{-1}A=D(I-|D_{A}^{-1}L_A|)D^{-1}-e^{i\psi}D|D_{A}^{-1}U_A|D^{-1}$
and
\begin{equation}
\begin{array}{lllll}\label{c1a1}H_{FGS}&=&(D_A-L_A)^{-1}U_A=(I-D_{A}^{-1}L_A)^{-1}D_{A}^{-1}U_A\\
&=&[D(I-|D_A^{-1}L_A|)D^{-1}]^{-1}(
e^{i\psi}D|D_{A}^{-1}U_A|D^{-1})\\
&=&e^{i\psi}D[(I-|D_{A}^{-1}L_A|)^{-1}|D_{A}^{-1}U_A|]D^{-1}.
\end{array}
\end{equation}
Using (\ref{c1a1}),
\begin{equation}\label{c1b1}
\begin{array}{lllll}
\det(e^{i\psi}I-H_{FGS})&=&\det[e^{i\psi}I-e^{i\psi}D((I-|D_{A}^{-1}L_A|)^{-1}|D_{A}^{-1}U_A|)D^{-1}]\\
&=& e^{i\psi}\cdot \det
(I-(I-|D_{A}^{-1}L_A|)^{-1}|D_{A}^{-1}U_A|)\\&=&\displaystyle\frac{e^{i\psi}\cdot
\det(I-|D_{A}^{-1}L_A|-|D_{A}^{-1}U_A|)}{\det(I-|D_{A}^{-1}L_A|)}\\
&=&\displaystyle\frac{e^{i\psi}\cdot
\det\mu(D_{A}^{-1}A)}{\det(I-|D_{A}^{-1}L_A|)}.
\end{array}
\end{equation}
Since $A\in DE_n$ is irreducible, so is $\mu(D_{A}^{-1}A)\in DE_n$.
Then it follows from lemma \ref{le2'} that  $\mu(D_{A}^{-1}A)$ is
singular. As a result, (\ref{c1b1}) gives
$\det(e^{i\psi}I-H_{FGS})=0$ to reveal that $e^{i\psi}$ is an
eigenvalue of $H_{FGS}$. This completes the sufficiency.

The following prove the necessity. Let $e^{i\psi}$ is an eigenvalue
of $H_{FGS}$. Then
\begin{equation}\label{c1b2}
\begin{array}{lllll}
\det(e^{i\psi}I-H_{FGS})&=&\det(e^{i\psi}I-(D_A-L_A)^{-1}U_A)\\
&=&\displaystyle\frac{\det[e^{i\psi}(D_A-L_A)-U_A]}{\det(D_A-L_A)}\\
&=&0.\end{array}
\end{equation}
Thus, $\det(e^{i\psi}(D_A-L_A)-U_A)=0$ which shows that
$e^{i\psi}(D_A-L_A)-U_A$ is singular. Since
$e^{i\psi}(D_A-L_A)-U_A\in DE_n$ is irreducible for
$A=D_{A}-L_{A}-U_{A}\in DE_n$ is irreducible, it follows from lemma
\ref{le2'} shows that $I-D_{A}^{-1}L_A-e^{-i\psi}D_{A}^{-1}U_A\in
\mathscr{R}^{0}_n$. Theorem \ref{thgi0} shows that there exists an
unitary diagonal matrix $D$ such that
\begin{equation}\label{c1c1}
\begin{array}{llll}
&&D^{-1}(I-D_{A}^{-1}L_A-e^{-i\psi}D_{A}^{-1}U_A)D\\&=&I-D^{-1}(D_{A}^{-1}L_A)D-e^{-i\psi}D^{-1}(D_{A}^{-1}U_A)D\\
&=&I-|D_{A}^{-1}L_A|-|D_{A}^{-1}U_A|.
\end{array}
\end{equation}
Equality (\ref{c1c1}) shows $D^{-1}(D_{A}^{-1}L_A)D=|D_{A}^{-1}L_A|$
and $D^{-1}(D_{A}^{-1}U_A)D=e^{i\psi}|D_{A}^{-1}U_A|$. Therefore,
\begin{eqnarray*}D^{-1}(D_{A}^{-1}A)D&=&I-D^{-1}(D_{A}^{-1}L_A)D-D^{-1}(D_{A}^{-1}U_A)D\\&=&I-|D_{A}^{-1}L_A|-e^{i\psi}|D_{A}^{-1}U_A|,\end{eqnarray*}
that is, there exists an unitary diagonal matrix $D$ such that
$$D^{-1}(D_{A}^{-1}A)D^{-1}=I-|D_{A}^{-1}L_A|-e^{i\psi}|D_{A}^{-1}U_A|).$$
Theorem \ref{thgi0a1} shows that $D_{A}^{-1}A\in
\mathscr{U}^{\psi}_n$. Here, we finish the necessity.
\end{proof}

\begin{lemma}\label{le4'} Let $A=(a_{ij})\in DE_n\ (n\geq3)$ be irreducible.
Then $e^{i\phi}$ is an eigenvalue of $H_{BGS}$ if and only if
$D_{A}^{-1}A\in \mathscr{L}^{\phi}_n$, where
$D_{A}=diag(a_{11},a_{22},\cdots,a_{nn})$ and $\phi\in \mathbb{R}.$
\end{lemma}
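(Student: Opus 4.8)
The plan is to mirror the proof of Lemma \ref{le4} for the forward Gauss-Seidel matrix, exchanging the roles of $L_A$ and $U_A$ and replacing Theorem \ref{thgi0a1} by its backward counterpart Theorem \ref{thgi0a2}. Since $A\in DE_n$ is irreducible, every $a_{ii}\neq 0$, so $(D_A-U_A)^{-1}$ exists and hence $H_{BGS}=(D_A-U_A)^{-1}L_A=(I-D_A^{-1}U_A)^{-1}D_A^{-1}L_A$ is well defined.

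For the sufficiency, I would assume $D_A^{-1}A\in\mathscr{L}^{\phi}_n$ and invoke Theorem \ref{thgi0a2} to produce a unitary diagonal matrix $D$ with $D^{-1}(D_A^{-1}A)D=(I-|D_A^{-1}U_A|)-e^{i\phi}|D_A^{-1}L_A|$; here the scalar $e^{i\eta}$ is forced to be $1$ because $D_A^{-1}A$ has unit diagonal. Solving for $D_A^{-1}A$ and substituting into $H_{BGS}$ yields $H_{BGS}=e^{i\phi}D[(I-|D_A^{-1}U_A|)^{-1}|D_A^{-1}L_A|]D^{-1}$. I would then compute $\det(e^{i\phi}I-H_{BGS})$, pulling the similarity through the determinant and factoring out $e^{i\phi}$, reducing it to $e^{i\phi}\det(I-|D_A^{-1}U_A|-|D_A^{-1}L_A|)/\det(I-|D_A^{-1}U_A|)=e^{i\phi}\det\mu(D_A^{-1}A)/\det(I-|D_A^{-1}U_A|)$. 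Because $\mu(D_A^{-1}A)\in DE_n$ is irreducible, Lemma \ref{le2'} makes it singular, the numerator vanishes, and $e^{i\phi}$ is an eigenvalue of $H_{BGS}$.

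For the necessity, I would start from $\det(e^{i\phi}I-H_{BGS})=\det[e^{i\phi}(D_A-U_A)-L_A]/\det(D_A-U_A)=0$, so that $M:=e^{i\phi}(D_A-U_A)-L_A$ is singular. The key observation is that $M$ remains irreducible and diagonally equipotent, since the moduli of its entries coincide with those of $A$ (merely redistributed between the two triangular parts), and its diagonal part is $e^{i\phi}D_A$. Applying Lemma \ref{le2'} to $M$ then gives $(e^{i\phi}D_A)^{-1}M=I-D_A^{-1}U_A-e^{-i\phi}D_A^{-1}L_A\in\mathscr{R}^{0}_n$. Theorem \ref{thgi0} next furnishes a unitary diagonal $D$ turning the off-diagonal parts into their moduli, which forces $D^{-1}(D_A^{-1}U_A)D=|D_A^{-1}U_A|$ and $D^{-1}(D_A^{-1}L_A)D=e^{i\phi}|D_A^{-1}L_A|$. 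Assembling these gives $D^{-1}(D_A^{-1}A)D=(I-|D_A^{-1}U_A|)-e^{i\phi}|D_A^{-1}L_A|$, whence $D_A^{-1}A\in\mathscr{L}^{\phi}_n$ by Theorem \ref{thgi0a2}.

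The main obstacle I anticipate is the bookkeeping in the necessity direction: one must verify carefully that $M=e^{i\phi}(D_A-U_A)-L_A$ is genuinely irreducible and diagonally equipotent so that Lemma \ref{le2'} applies, and must identify its diagonal correctly as $e^{i\phi}D_A$ rather than $D_A$ before scaling. This is precisely what produces the $e^{-i\phi}$ factor on the lower-triangular part, which later reattaches as $e^{i\phi}$ and yields the defining relation of $\mathscr{L}^{\phi}_n$; getting this phase in the wrong place would collapse the argument to the $\psi$-ray case instead.
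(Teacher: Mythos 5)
Your proposal is correct and is essentially the paper's own argument: the paper proves this lemma by simply citing Theorem \ref{thgi0a2}, Theorem \ref{thgi0} and Lemma \ref{le2'} and declaring the proof ``similar to the proof of Lemma \ref{le4},'' which is exactly the $L_A\leftrightarrow U_A$ mirroring you carry out in detail, including the correct identification of the diagonal of $e^{i\phi}(D_A-U_A)-L_A$ as $e^{i\phi}D_A$.
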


\begin{proof} According to Theorem \ref{thgi0a2}, Theorem \ref{thgi0} and Lemma
\ref{le2'}, the proof is obtained immediately similar to the proof
of Lemma \ref{le4}. \end{proof}

\begin{theorem}\label{the5} Let $A\in DE_n\ (n\geq3)$ be
irreducible. Then $\rho(H_{FGS})<1$, where $H_{FGS}$ is defined in
(\ref{1j}), i.e., the sequence $\{x^{(i)}\}$ generated by FGS-scheme
(\ref{r5}) converges to the unique solution of {\rm(\ref{r1})} for
any choice of the initial guess $x^{(0)}$ if and only if
$D_{A}^{-1}A\notin {\mathscr{U}}^{\psi}_n$.
\end{theorem}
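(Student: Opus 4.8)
The plan is to combine two ingredients: an a priori bound $\rho(H_{FGS})\le 1$ valid for every irreducible $A\in DE_n$, together with the exact description of unit-modulus eigenvalues already supplied by Lemma \ref{le4}. Once $\rho(H_{FGS})\le 1$ is in hand, the strict inequality $\rho(H_{FGS})<1$ is equivalent to the absence of any eigenvalue lying on the unit circle, and Lemma \ref{le4} states precisely that $e^{i\psi}$ is an eigenvalue of $H_{FGS}$ if and only if $D_A^{-1}A\in\mathscr{U}^{\psi}_n$. Matching these two facts yields the claimed equivalence, where $D_A^{-1}A\notin\mathscr{U}^{\psi}_n$ is read as ``$D_A^{-1}A\notin\mathscr{U}^{\psi}_n$ for every $\psi\in\mathbb{R}$''.

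The first and main step is to establish $\rho(H_{FGS})\le 1$. Let $\lambda$ be any eigenvalue of $H_{FGS}=(D_A-L_A)^{-1}U_A$. Since $A$ is irreducible, $a_{ii}\neq 0$ for all $i$, so $D_A-L_A$ is nonsingular and $\lambda$ is an eigenvalue precisely when $B:=\lambda(D_A-L_A)-U_A$ is singular. I would argue by contradiction, assuming $|\lambda|>1$. Reading off the entries of $B$ (diagonal $\lambda a_{ii}$, strictly-lower entries $\lambda a_{ij}$, strictly-upper entries $a_{ij}$) and invoking the equipotence identity $|a_{ii}|=\sum_{j\neq i}|a_{ij}|$, row $i$ of $B$ satisfies $|\lambda||a_{ii}|=|\lambda|\sum_{j<i}|a_{ij}|+|\lambda|\sum_{j>i}|a_{ij}|\ge|\lambda|\sum_{j<i}|a_{ij}|+\sum_{j>i}|a_{ij}|$, the difference being $(|\lambda|-1)\sum_{j>i}|a_{ij}|$. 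Thus $B$ is diagonally dominant by row, and the dominance is strict exactly for those rows possessing a nonzero strictly-upper entry.

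The key observation that closes this step is that an irreducible matrix cannot be lower triangular, so $U_A\neq 0$ and at least one row of $B$ is strictly dominant; moreover, scaling by $\lambda\neq 0$ leaves the off-diagonal zero pattern unchanged, so $B$ inherits irreducibility from $A$. Hence $B\in ID_n$, and by Lemma \ref{l11} ($ID_n\subset H^I_n$) the matrix $B$ is nonsingular, contradicting $\det B=0$. Therefore $|\lambda|\le 1$ for every eigenvalue, i.e. $\rho(H_{FGS})\le 1$. With this bound secured, I would finish both directions: if $D_A^{-1}A\in\mathscr{U}^{\psi}_n$ for some $\psi$, then Lemma \ref{le4} produces an eigenvalue $e^{i\psi}$ of modulus $1$, forcing $\rho(H_{FGS})=1$, so $\rho(H_{FGS})<1$ implies $D_A^{-1}A\notin\mathscr{U}^{\psi}_n$; conversely, if $D_A^{-1}A\notin\mathscr{U}^{\psi}_n$ for all $\psi$, then since $\rho(H_{FGS})\le 1$ any eigenvalue of modulus $1$ would be of the form $e^{i\psi}$ and, by Lemma \ref{le4}, would force $D_A^{-1}A\in\mathscr{U}^{\psi}_n$, a contradiction, whence $\rho(H_{FGS})<1$.

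I would flag the a priori bound $\rho(H_{FGS})\le 1$ as the principal obstacle, since it is the one ingredient not handed to us directly by a previous result. Its proof rests on two points that merit care: irreducibility rules out $U_A=0$ (guaranteeing at least one strictly dominant row of $B$), and multiplication by $\lambda$ preserves the irreducibility needed to pass from diagonal dominance to nonsingularity via $ID_n\subset H^I_n$. The remaining eigenvalue-on-the-circle bookkeeping is then routine once Lemma \ref{le4} is available.
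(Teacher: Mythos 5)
Your proposal is correct and follows essentially the same route as the paper: rule out $|\lambda|>1$ by showing $\lambda(D_A-L_A)-U_A$ is irreducibly diagonally dominant (hence nonsingular), then dispose of unit-modulus eigenvalues via Lemma \ref{le4}. The only difference is presentational — you spell out the row-by-row dominance computation and the fact that irreducibility forces $U_A\neq 0$, which the paper dismisses with ``obviously'' and a citation to Varga's Theorem 1.21.
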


\begin{proof} The sufficiency can be proved by contradiction. We
assume that there exists an eigenvalue $\lambda$ of $H_{FGS}$ such
that $|\lambda|\geq 1$. Then
\begin{equation}\label{equ4'}
det(\lambda I-H_{FGS})=0.
\end{equation}
If $|\lambda|>1$, then $\lambda I-H_{FGS}=(D_A-L_A)^{-1}(\lambda
D_A-\lambda L_A-U_A)$. Obviously, $\lambda I-\lambda L-U\in ID_n$
and is nonsingular(see Theorem 1.21 in \cite{R.S.15}). As a result,
$det(\lambda I-H_{FGS})\neq 0$, which contradicts (\ref{equ4'}).
Thus, $|\lambda|=1$. Set $\lambda=e^{i\psi}$, where $\psi\in R$.
Then Lemma \ref{le4} shows that $D_{A}^{-1}A\in
\mathscr{U}^{\psi}_n$, which contradicts the assumption $A\notin
\mathscr{U}^{\theta}_n$. Therefore, $\rho(H_{FGS})<1$. The
sufficiency is finished.

Let us prove the necessity by contradiction. Assume that
$D_{A}^{-1}A\in {\mathscr{U}}^{\psi}_n$. It then follows from Lemma
\ref{le4} that $\rho(H_{FGS})=1$ which contradicts
$\rho(H_{FGS})<1$. A contradiction arise to demonstrate that the
necessity is true. Thus, we complete the proof.
\end{proof}

\begin{theorem}\label{the5'} Let $A\in DE_n\ (n\geq3)$ be
irreducible. Then $\rho(H_{BGS})<1$, where $H_{BGS}$ is defined in
(\ref{1j'}), i.e., the sequence $\{x^{(i)}\}$ generated by
BGS-scheme (\ref{r5}) converges to the unique solution of
{\rm(\ref{r1})} for any choice of the initial guess $x^{(0)}$ if and
only if $D_{A}^{-1}A\notin {\mathscr{L}}^{\phi}_n$.
\end{theorem}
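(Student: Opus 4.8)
The plan is to mirror the proof of Theorem \ref{the5} almost verbatim, replacing the forward Gauss-Seidel data by their backward counterparts: $H_{FGS}=(D_A-L_A)^{-1}U_A$ becomes $H_{BGS}=(D_A-U_A)^{-1}L_A$, the set $\mathscr{U}^{\psi}_n$ becomes $\mathscr{L}^{\phi}_n$, and the eigenvalue characterization of Lemma \ref{le4} is replaced by its backward analogue, Lemma \ref{le4'}. As in Theorem \ref{the5}, both implications will be handled by contradiction.

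For the sufficiency I would assume $D_A^{-1}A\notin\mathscr{L}^{\phi}_n$ yet suppose that $H_{BGS}$ has an eigenvalue $\lambda$ with $|\lambda|\geq1$, so $\det(\lambda I-H_{BGS})=0$. Writing $\lambda I-H_{BGS}=(D_A-U_A)^{-1}(\lambda D_A-\lambda U_A-L_A)$, the first task is to exclude $|\lambda|>1$. The pencil $\lambda D_A-\lambda U_A-L_A$ has diagonal $\lambda a_{ii}$, strictly-upper part scaled by $\lambda$, and strictly-lower part unscaled; in row $i$ one compares $|\lambda||a_{ii}|$ with $|\lambda|\sum_{j>i}|a_{ij}|+\sum_{j<i}|a_{ij}|$, and since $A\in DE_n$ gives $|a_{ii}|=\sum_{j\neq i}|a_{ij}|$, the surplus is $(|\lambda|-1)\sum_{j<i}|a_{ij}|\geq0$, i.e. the pencil is diagonally dominant. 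Ruling out $|\lambda|>1$ then forces $|\lambda|=1$; writing $\lambda=e^{i\phi}$ with $\phi\in\mathbb{R}$ and invoking Lemma \ref{le4'} yields $D_A^{-1}A\in\mathscr{L}^{\phi}_n$, contradicting the hypothesis, so $\rho(H_{BGS})<1$.

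The necessity is immediate and symmetric to that of Theorem \ref{the5}: if $D_A^{-1}A\in\mathscr{L}^{\phi}_n$ for some $\phi$, then Lemma \ref{le4'} makes $e^{i\phi}$ an eigenvalue of $H_{BGS}$, whence $\rho(H_{BGS})\geq1$, contradicting $\rho(H_{BGS})<1$.

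The main obstacle is the $|\lambda|>1$ step, where I must certify that $\lambda D_A-\lambda U_A-L_A$ is nonsingular, i.e. that it belongs to $ID_n$. Diagonal dominance is automatic from $A\in DE_n$, but irreducible diagonal dominance additionally needs strict inequality in at least one row, which holds exactly for those rows carrying a nonzero strictly-lower entry. This is precisely where the irreducibility of $A$ (together with $n\geq3$) is used: if the strictly-lower part vanished entirely, $A$ would be upper triangular and hence reducible, a contradiction; and since $\lambda D_A-\lambda U_A-L_A$ shares the zero pattern of $A$ for $\lambda\neq0$, it inherits irreducibility from $A$. Thus it lies in $ID_n$ and is nonsingular by Theorem 1.21 of \cite{R.S.15}, so $\det(\lambda I-H_{BGS})\neq0$, contradicting the eigenvalue assumption. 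All remaining content is delivered directly by Lemma \ref{le4'}.
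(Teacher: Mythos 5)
Your proposal is correct and follows exactly the route the paper takes: the paper's own proof of Theorem \ref{the5'} simply says it is obtained from Lemma \ref{le4'} by mirroring the argument of Theorem \ref{the5}, which is precisely what you do. Your added detail on the $|\lambda|>1$ case (diagonal dominance of the pencil $\lambda D_A-\lambda U_A-L_A$ with strict surplus in rows carrying a nonzero strictly-lower entry, plus irreducibility inherited from $A$) is a sound and in fact more explicit justification than the paper provides for the corresponding step.
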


\begin{proof} Similar to the proof of Theorem \ref{the5}, the proof is
obtained immediately by Lemma \ref{le4'}. \end{proof}

Following, the conclusions of Theorem \ref{the5} and Theorem
\ref{the5'} will be extended to irreducible matrices that belong to
the class of generalized diagonally equipotent matrices and the
class of irreducible mixed $H-$matrices.

\begin{theorem}\label{the6} Let $A=(a_{ij})\in GDE_n\ (n\geq3)$ be
irreducible. Then $\rho(H_{FGS})<1$, where $H_{FGS}$ is defined in
(\ref{1j}), i.e., the sequence $\{x^{(i)}\}$ generated by FGS-scheme
(\ref{r5}) converges to the unique solution of {\rm(\ref{r1})} for
any choice of the initial guess $x^{(0)}$ if and only if
$D_{A}^{-1}A\notin {\mathscr{U}}^{\psi}_n$.
\end{theorem}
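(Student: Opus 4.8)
The plan is to reduce Theorem \ref{the6} to the already-established Theorem \ref{the5} by a positive diagonal scaling that turns a generalized diagonally equipotent matrix into an ordinary diagonally equipotent one. Since $A=(a_{ij})\in GDE_n$, Definition \ref{d2} furnishes positive constants $\alpha_1,\dots,\alpha_n$ with $\alpha_i|a_{ii}|=\sum_{j\neq i}\alpha_j|a_{ij}|$ for every $i\in\langle n\rangle$. I would set $X=\mathrm{diag}(\alpha_1,\dots,\alpha_n)$ and consider $B=AX$. The $(i,j)$ entry of $B$ is $a_{ij}\alpha_j$, so the row balance above reads exactly $|b_{ii}|=\sum_{j\neq i}|b_{ij}|$; that is, $B\in DE_n$. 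Moreover, right multiplication by the positive diagonal $X$ preserves the zero pattern of $A$, so $B$ is again irreducible, and it preserves the triangular splitting, giving $D_B=D_AX$, $L_B=L_AX$ and $U_B=U_AX$.

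First I would record how the relevant objects transform. From the splitting identities,
\[
H_{FGS}^{B}=(D_B-L_B)^{-1}U_B=[(D_A-L_A)X]^{-1}U_AX=X^{-1}(D_A-L_A)^{-1}U_AX=X^{-1}H_{FGS}^{A}X,
\]
so $H_{FGS}^{B}$ and $H_{FGS}^{A}$ are similar and hence share their spectra; in particular $\rho(H_{FGS}^{A})<1$ if and only if $\rho(H_{FGS}^{B})<1$. Likewise $D_B^{-1}B=(D_AX)^{-1}(AX)=X^{-1}(D_A^{-1}A)X$, so the normalized matrix is carried to a positive diagonal similarity of itself.

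The crux is then to show that the class $\mathscr{U}^{\psi}_n$ is invariant under similarity by a positive diagonal matrix, i.e. that $D_A^{-1}A\in\mathscr{U}^{\psi}_n$ if and only if $D_B^{-1}B=X^{-1}(D_A^{-1}A)X\in\mathscr{U}^{\psi}_n$. This is where I expect the only real work: by Definition \ref{gi1}, membership in $\mathscr{U}^{\psi}_n$ depends solely on the ray pattern matrix $E^{i\chi}$, that is, on the arguments of the entries, and conjugation by the positive diagonal $X$ multiplies the $(r,s)$ entry by the positive scalar $\alpha_s/\alpha_r$, leaving every argument---and therefore the ray pattern matrix---unchanged (equivalently, one checks via Theorem \ref{thgi0a1} that the unitary diagonal matrix realizing the $\psi$-ray normal form for $D_A^{-1}A$ also realizes it for $X^{-1}(D_A^{-1}A)X$). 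Granting this invariance, I would finish by chaining equivalences: $\rho(H_{FGS}^{A})<1$ holds if and only if $\rho(H_{FGS}^{B})<1$, which by Theorem \ref{the5} applied to the irreducible matrix $B\in DE_n$ holds if and only if $D_B^{-1}B\notin\mathscr{U}^{\psi}_n$, and finally if and only if $D_A^{-1}A\notin\mathscr{U}^{\psi}_n$ by the invariance just established---exactly the assertion.
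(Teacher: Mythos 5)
Your proposal is correct and follows essentially the same route as the paper: the paper likewise right-multiplies $A$ by the positive diagonal matrix furnished by Definition \ref{d2} to obtain a matrix in $DE_n$, observes that $H_{FGS}$ and $D_A^{-1}A$ transform by positive diagonal similarity, and then invokes Theorem \ref{the5}. Your explicit justification that membership in $\mathscr{U}^{\psi}_n$ is preserved under positive diagonal similarity (since the entry arguments are unchanged) is a point the paper leaves implicit, and is a welcome addition.
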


\begin{proof}
According to Definition \ref{d2}, the exists a diagonal matrix
$E=diag(e_1,e_2,$ $\cdots,e_n)$ with $e_k>0$ for all $k\in \langle
n\rangle$, such that $AE=(a_{ij}e_j)\in DE_n.$ Let $AE=F=(f_{ij})$
with $f_{ij}=a_{ij}e_j$ for all $i,j\in \langle n\rangle$. Then
$H^F_{FGS}=E^{-1}H_{FGS}E$ and $D_F^{-1}F=E^{-1}(D_{A}^{-1}A)E$ with
$D_F=D_AE$. Theorem \ref{the5} yields that $\rho(H^F_{FGS})<1$ if
and only if $D_{F}^{-1}F\notin {\mathscr{U}}^{\psi}_n$. Since
$\rho(H^F_{FGS})=\rho(H_{FGS}$ and $D_{A}^{-1}A\notin
{\mathscr{U}}^{\psi}_n$ for $D_F^{-1}F=E^{-1}(D_{A}^{-1}A)E\notin
{\mathscr{U}}^{\psi}_n$ and $E=diag(e_1,e_2,$ $\cdots,e_n)$ with
$e_k>0$ for all $k\in \langle n\rangle$, $\rho(H_{FGS})<1$, i.e.,
the sequence $\{x^{(i)}\}$ generated by FGS-scheme (\ref{r5})
converges to the unique solution of {\rm(\ref{r1})} for any choice
of the initial guess $x^{(0)}$ if and only if $D_{A}^{-1}A\notin
{\mathscr{U}}^{\psi}_n$., i.e., $\rho(J_A)<1$ if and only if
$D_{A}^{-1}A\notin {\mathscr{U}}^{\psi}_n$.
\end{proof}

\begin{theorem}\label{the6'} Let $A=(a_{ij})\in GDE_n\ (n\geq3)$ be
irreducible. Then $\rho(H_{BGS})<1$, where $H_{BGS}$ is defined in
(\ref{1j}), i.e., the sequence $\{x^{(i)}\}$ generated by BGS-scheme
(\ref{r5}) converges to the unique solution of {\rm(\ref{r1})} for
any choice of the initial guess $x^{(0)}$ if and only if
$D_{A}^{-1}A\notin {\mathscr{L}}^{\phi}_n$.
\end{theorem}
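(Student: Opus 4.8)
The plan is to reduce the generalized diagonally equipotent case to the diagonally equipotent case already settled in Theorem \ref{the5'}, exactly paralleling the proof of Theorem \ref{the6} but with the backward quantities in place of the forward ones. Since $A\in GDE_n$, Definition \ref{d2} supplies a positive diagonal matrix $E=diag(e_1,\ldots,e_n)$, $e_k>0$, for which $F:=AE\in DE_n$. Because $E$ is diagonal, right multiplication by $E$ scales columns only and therefore preserves the triangular splitting: writing $F=D_F-L_F-U_F$ one has $D_F=D_AE$, $L_F=L_AE$ and $U_F=U_AE$.

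First I would record the similarity that makes the spectral radius transfer. From the block relations above,
\begin{eqnarray*}
H_{BGS}^F &=& (D_F-U_F)^{-1}L_F = [(D_A-U_A)E]^{-1}L_AE\\
&=& E^{-1}(D_A-U_A)^{-1}L_AE = E^{-1}H_{BGS}E,
\end{eqnarray*}
so that $\rho(H_{BGS}^F)=\rho(H_{BGS})$. Likewise $D_F^{-1}F=(D_AE)^{-1}(AE)=E^{-1}(D_A^{-1}A)E$, so the relevant normalized matrices are related by the same positive diagonal similarity.

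Next I would observe that membership in $\mathscr{L}^{\phi}_n$ is invariant under the map $M\mapsto E^{-1}ME$. Indeed, for entries $m_{rs}=|m_{rs}|e^{i\chi_{rs}}$ the conjugated matrix has $(E^{-1}ME)_{rs}=(e_s/e_r)m_{rs}$, which leaves every argument $\chi_{rs}$ unchanged and alters only the positive moduli; since $\mathscr{L}^{\phi}_n$ is determined through Theorem \ref{thgi0a2} by the ray pattern matrix $E^{i\chi}$, we get $D_F^{-1}F\in\mathscr{L}^{\phi}_n$ if and only if $D_A^{-1}A\in\mathscr{L}^{\phi}_n$. Applying Theorem \ref{the5'} to the irreducible matrix $F\in DE_n$ yields $\rho(H_{BGS}^F)<1$ if and only if $D_F^{-1}F\notin\mathscr{L}^{\phi}_n$, and chaining the three equivalences gives $\rho(H_{BGS})<1$ if and only if $D_A^{-1}A\notin\mathscr{L}^{\phi}_n$, which is the assertion; the convergence of the BGS-scheme then follows from $\rho(H_{BGS})<1$ as usual.

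The computation itself is routine; the only points that genuinely need verification, and hence the \emph{crux}, are the two invariance claims: that right scaling by the positive diagonal $E$ preserves the lower, upper and diagonal splitting so that the BGS matrix transforms by a genuine similarity, and that this same real-positive scaling leaves the argument pattern, hence $\mathscr{L}^{\phi}_n$-membership, untouched. Both are immediate once noted, so no serious obstacle remains beyond transcribing the backward analogues of the objects used for Theorem \ref{the6}.
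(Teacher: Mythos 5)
Your proposal is correct and follows essentially the same route as the paper: the paper proves Theorem \ref{the6'} by invoking Definition \ref{d2} to produce the positive diagonal scaling $E$ with $AE\in DE_n$, noting $H^{F}_{BGS}=E^{-1}H_{BGS}E$ and $D_F^{-1}F=E^{-1}(D_A^{-1}A)E$, and then applying Theorem \ref{the5'}, exactly as you do. Your write-up is in fact slightly more careful than the paper's, since you explicitly justify the step the paper only asserts, namely that conjugation by a positive diagonal matrix leaves the argument pattern, and hence $\mathscr{L}^{\phi}_n$-membership, unchanged.
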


\begin{proof}
Similar to the proof of Theorem \ref{the6}, we can obtain the proof
by Definition \ref{d2} and Theorem \ref{the5'}.
\end{proof}

According to Lemma \ref{0.1.7} and Lemma \ref{le0}, if a matrix is
an irreducible mixed $H-$matrix, then it is an irreducible
generalized diagonally equipotent matrix. As a consequence, we have
the following conclusions.

\begin{theorem}\label{the6''} Let $A=(a_{ij})\in H^M_n\ (n\geq3)$ be
irreducible. Then $\rho(H_{FGS})<1$, where $H_{FGS}$ is defined in
(\ref{1j}), i.e., the sequence $\{x^{(i)}\}$ generated by FGS-scheme
(\ref{r5}) converges to the unique solution of {\rm(\ref{r1})} for
any choice of the initial guess $x^{(0)}$ if and only if
$D_{A}^{-1}A\notin {\mathscr{U}}^{\psi}_n$.
\end{theorem}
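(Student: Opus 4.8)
The plan is to reduce the assertion to Theorem~\ref{the6}, which already establishes exactly this equivalence for irreducible matrices in $GDE_n$. All the work therefore goes into the structural fact, announced just before the statement, that an irreducible mixed $H-$matrix is necessarily an irreducible generalized diagonally equipotent matrix; granting this, the equivalence ``$\rho(H_{FGS})<1$ if and only if $D_A^{-1}A\notin\mathscr{U}^{\psi}_n$'' is merely Theorem~\ref{the6} read off for $A$.

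First I would establish the reduction. Since $A\in H^M_n\subset H_n$ is irreducible, Lemma~\ref{0.1.7} gives $A\in GD_n$. Being a mixed $H-$matrix, $A\notin H^I_n=GSD_n$ by Lemma~\ref{l11}, so the generalized dominance inequality~(\ref{av}) cannot be strict at every index under the dominating scaling; equality must hold somewhere, whence $A$ is nonstrictly diagonally dominant. Applying Lemma~\ref{le0} to the (single-block) Frobenius normal form of the irreducible matrix $A$ shows that $A$ possesses at least one generalized diagonally equipotent principal submatrix $A(\alpha)$. The crucial step is to show $\alpha=\langle n\rangle$: were $\alpha$ a proper subset, Remark~\ref{remark1} would force $A(\alpha,\alpha')=0$, making $A$ reducible and contradicting the hypothesis. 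Hence $A$ itself is generalized diagonally equipotent, i.e. $A\in GDE_n$.

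With $A$ now identified as an irreducible member of $GDE_n$ and $n\geq 3$, Theorem~\ref{the6} applies verbatim and delivers $\rho(H_{FGS})<1$ --- equivalently, convergence of the FGS sequence $\{x^{(i)}\}$ for every initial guess --- precisely when $D_A^{-1}A\notin\mathscr{U}^{\psi}_n$. The only real obstacle is the reduction just described, and within it the step that uses irreducibility together with Remark~\ref{remark1} to rule out proper equipotent subblocks; once $A\in GDE_n$ is secured, the entire spectral analysis is inherited from Theorem~\ref{the6} and no new eigenvalue estimate is needed.
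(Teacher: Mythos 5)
Your proof is correct and follows essentially the same route as the paper: the paper justifies this theorem by the single observation (citing Lemma~\ref{0.1.7} and Lemma~\ref{le0}) that an irreducible mixed $H$-matrix is an irreducible generalized diagonally equipotent matrix, and then invokes Theorem~\ref{the6}. You have merely spelled out that reduction in more detail (via Remark~\ref{remark1} to rule out proper equipotent blocks), which is a faithful elaboration of the paper's argument rather than a different approach.
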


\begin{theorem}\label{the6'''} Let $A=(a_{ij})\in H^M_n\ (n\geq3)$ be
irreducible. Then $\rho(H_{BGS})<1$, where $H_{BGS}$ is defined in
(\ref{1j}), i.e., the sequence $\{x^{(i)}\}$ generated by BGS-scheme
(\ref{r5}) converges to the unique solution of {\rm(\ref{r1})} for
any choice of the initial guess $x^{(0)}$ if and only if
$D_{A}^{-1}A\notin {\mathscr{L}}^{\phi}_n$.
\end{theorem}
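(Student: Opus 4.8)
The plan is to reduce the statement to Theorem \ref{the6'}, which already settles the case of an irreducible generalized diagonally equipotent matrix. The entire argument rests on the structural fact, asserted in the paragraph preceding the theorem, that an irreducible mixed $H-$matrix is actually generalized diagonally equipotent; once this inclusion is in hand, there is nothing left to compute.

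First I would observe that $A$ is irreducible and $A\in H^M_n\subset H_n$, so Lemma \ref{0.1.7} gives $A\in GD_n$. Next I would apply Lemma \ref{le0}: because $A$ is irreducible, its Frobenius normal form (\ref{eq5a}) consists of the single diagonal block $A$ itself, so the lemma guarantees that $A$ possesses at least one generalized diagonally equipotent principal submatrix $A(\alpha)$ for some $\alpha=\langle n\rangle-\alpha'\subseteq\langle n\rangle$. The crucial step, and the one I expect to require the most care, is to promote this ``some equipotent principal submatrix'' to ``$A$ itself is equipotent.'' For this I would invoke Remark \ref{remark1}: if $\alpha'\neq\emptyset$ and $A(\alpha)$ were generalized diagonally equipotent, then $A(\alpha,\alpha')=0$, which forces $A$ to be reducible, contradicting the hypothesis. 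Hence $\alpha=\langle n\rangle$, so $A$ is itself generalized diagonally equipotent, i.e.\ $A\in GDE_n$.

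With $A$ now identified as an irreducible matrix in $GDE_n$, Theorem \ref{the6'} applies verbatim and yields $\rho(H_{BGS})<1$ — equivalently, convergence of the BGS scheme (\ref{r5}) to the unique solution of (\ref{r1}) for every initial guess $x^{(0)}$ — if and only if $D_A^{-1}A\notin\mathscr{L}^{\phi}_n$, which is precisely the claim. I anticipate the only genuine subtlety to be the reducibility argument via Remark \ref{remark1} that upgrades the equipotent submatrix furnished by Lemma \ref{le0} to the full matrix; the remaining steps are direct citations of the earlier results, and the FGS companion (Theorem \ref{the6''}) would follow by the identical reduction to Theorem \ref{the6}.
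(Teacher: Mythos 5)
Your proposal is correct and follows essentially the same route as the paper: the paper justifies Theorem \ref{the6'''} precisely by the observation (via Lemma \ref{0.1.7} and Lemma \ref{le0}) that an irreducible mixed $H-$matrix is an irreducible generalized diagonally equipotent matrix, and then invokes Theorem \ref{the6'}. Your explicit use of Remark \ref{remark1} to upgrade ``has a generalized diagonally equipotent principal submatrix'' to ``is itself generalized diagonally equipotent'' just spells out the step the paper leaves implicit.
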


Now, we consider convergence of SGS-iterative method. The following
lemma will be used in this section.

\begin{lemma}\label{le3.2} {\rm (see Lemma 3.13 in \cite{Z.X22})}~~Let $A=\left[
 \begin{array}{ccc}
 \ E & \ U\\
 \ L & \ F
 \end{array}
 \right]\in \mathbb{C}^{2n\times 2n}$, where $E,F,L,U\in C^{n\times n}$ and $E$ is nonsingular.
 Then the Schur
 complement of $A$ with respect to $E$,
 i.e., $A/E=F-LE^{-1}U$ is nonsingular if and only if $A$ is nonsingular.
\end{lemma}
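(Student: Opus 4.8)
The plan is to reduce the claimed equivalence to the classical product formula $\det A=\det(E)\cdot\det(A/E)$ and then invoke the hypothesis that $E$ is nonsingular. First I would record the block factorization
\[
A=\left[\begin{array}{cc} E & U \\ L & F \end{array}\right]=\left[\begin{array}{cc} I & 0 \\ LE^{-1} & I \end{array}\right]\left[\begin{array}{cc} E & U \\ 0 & A/E \end{array}\right],
\]
where $A/E=F-LE^{-1}U$ and the factor $E^{-1}$ is available precisely because $E$ is nonsingular. This identity is verified by a direct block multiplication: the $(2,1)$ block of the product is $LE^{-1}E=L$, the $(2,2)$ block is $LE^{-1}U+(F-LE^{-1}U)=F$, and the top block row is reproduced unchanged, so the product indeed equals $A$.

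Next I would take determinants across this factorization. The first factor is block lower triangular with identity diagonal blocks, so its determinant is $1$; the second factor is block upper triangular with square diagonal blocks $E$ and $A/E$, so its determinant is $\det(E)\cdot\det(A/E)$. By multiplicativity of the determinant this yields
\[
\det A=\det(E)\cdot\det(A/E).
\]
Since $E$ is nonsingular, $\det(E)\neq 0$, and therefore $\det A\neq 0$ if and only if $\det(A/E)\neq 0$. Reading this in terms of nonsingularity gives exactly the stated equivalence: $A$ is nonsingular if and only if its Schur complement $A/E=F-LE^{-1}U$ is nonsingular, and both implications are obtained at once.

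There is no serious obstacle in this argument, as it is a standard consequence of the Schur determinant formula. The only point that deserves explicit mention is that the determinant of a block triangular matrix with square diagonal blocks factors as the product of the determinants of those diagonal blocks, which I would either cite as standard or note as following from a Laplace expansion along the zero block. An alternative route would be to write down the explicit block inverse of $A$ in terms of $(A/E)^{-1}$ and $E^{-1}$, but the determinant argument is cleaner and settles both directions simultaneously, so I would prefer it.
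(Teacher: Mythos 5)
Your proof is correct. The paper does not actually prove this lemma---it is quoted verbatim from Lemma 3.13 of the cited reference (Zhang et al., \emph{Electron.\ J.\ Linear Algebra} 18 (2009))---so there is no internal proof to compare against; your argument via the block factorization
$A=\left[\begin{array}{cc} I & 0 \\ LE^{-1} & I \end{array}\right]\left[\begin{array}{cc} E & U \\ 0 & A/E \end{array}\right]$
and the Schur determinant identity $\det A=\det(E)\,\det(A/E)$ is the standard and complete way to establish the equivalence, and every step you give (the block multiplication check, the determinant of a block triangular matrix with square diagonal blocks, and the final division by $\det E\neq 0$) is sound.
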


\begin{theorem}\label{th3.24} Let $A\in DE_n\ (n\geq3)$ be
irreducible. Then $\rho(H_{SGS})<1$, where $H_{SGS}$ is defined in
(\ref{1jj}), i.e., the sequence $\{x^{(i)}\}$ generated by
SGS-scheme (\ref{r5}) converges to the unique solution of
{\rm(\ref{r1})} for any choice of the initial guess $x^{(0)}$ if and
only if $D_{A}^{-1}A\notin {\mathscr{R}}^{0}_n$.
\end{theorem}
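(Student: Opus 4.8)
The plan is to convert the ray-pattern hypothesis into a singularity statement about $A$ and then to study the whole spectrum of $H_{SGS}$ through an associated $2n\times 2n$ matrix. Since $A\in DE_n$ is irreducible, the normalized matrix $D_A^{-1}A$ has the same zero pattern (hence is irreducible) and satisfies $|(D_A^{-1}A)_{ii}|=1=\sum_{j\neq i}|a_{ij}/a_{ii}|$, so $D_A^{-1}A\in DE_n$ as well; Lemma~\ref{le2'} then gives that $D_A^{-1}A\in\mathscr{R}^0_n$ if and only if $A$ is singular. Thus the theorem is equivalent to the assertion that, for irreducible $A\in DE_n$ with $n\geq3$, $\rho(H_{SGS})<1$ if and only if $A$ is nonsingular. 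Because $A$ is irreducible, $a_{ii}\neq0$ for all $i$, so $D_A-L_A$ and $D_A-U_A$ are invertible and $H_{SGS}=H_{BGS}H_{FGS}=(D_A-U_A)^{-1}L_A(D_A-L_A)^{-1}U_A$ is well defined.

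The key device is to encode the eigenvalues of $H_{SGS}$ by a block matrix. Applying the Schur-complement criterion (Lemma~\ref{le3.2}) to the block matrix $\left[\begin{array}{cc}\lambda I & H_{BGS}\\ H_{FGS} & I\end{array}\right]$, whose Schur complement with respect to the lower-right block $I$ is exactly $\lambda I-H_{SGS}$, and then clearing the triangular factors $D_A-U_A$ and $D_A-L_A$, I would show that for every $\lambda$
\[
\det(\lambda I-H_{SGS})=\frac{1}{\left(\prod_i a_{ii}\right)^2}\,\det C_\lambda,\qquad C_\lambda=\left[\begin{array}{cc}\lambda(D_A-U_A)& L_A\\ U_A & D_A-L_A\end{array}\right].
\]
Hence $\lambda\in\sigma(H_{SGS})$ if and only if $C_\lambda$ is singular. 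For the special value $\lambda=1$, taking the Schur complement of $C_1$ with respect to its block $D_A-L_A$ (again via Lemma~\ref{le3.2}) gives $C_1/(D_A-L_A)=(D_A-U_A)-L_A(D_A-L_A)^{-1}U_A=D_A(D_A-L_A)^{-1}A$, so $1\in\sigma(H_{SGS})$ if and only if $A$ is singular. This already settles one direction: if $A$ is singular then $1\in\sigma(H_{SGS})$ and $\rho(H_{SGS})\geq1$, so $\rho(H_{SGS})<1$ fails.

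For the converse I would first establish $\rho(H_{SGS})\leq1$. Writing the forward-substitution recursion $(D_A-L_A)w=U_A x$ componentwise and inducting on the index using the row equipotence $|a_{ii}|=\sum_{j\neq i}|a_{ij}|$, one gets $\|H_{FGS}\|_\infty\leq1$, and symmetrically $\|H_{BGS}\|_\infty\leq1$ by backward substitution; therefore $\|H_{SGS}\|_\infty\leq\|H_{BGS}\|_\infty\|H_{FGS}\|_\infty\leq1$ and $\rho(H_{SGS})\leq1$. It then remains to show that, when $A$ is nonsingular, no eigenvalue of modulus $1$ can occur; equivalently, that $\lambda\in\sigma(H_{SGS})$ with $|\lambda|=1$ forces $A$ to be singular. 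The starting observation is that for $|\lambda|=1$ a direct count of the row sums shows $C_\lambda\in DE_{2n}$. I would then apply a maximum-modulus argument to a null vector $z=(x,y)^{T}$ of $C_\lambda$: starting from any index with $|z_p|=\|z\|_\infty$, equality in that row forces every out-neighbour in the digraph of $C_\lambda$ to have maximal modulus, and following these edges isolates an irreducible, diagonally equipotent, singular principal submatrix of $C_\lambda$ on which the restriction of $z$ is a nonzero kernel vector. By Lemma~\ref{le2'} the normalization of this sub-block lies in $\mathscr{R}^0$, and translating the resulting $0$-ray relations back through the block pattern of $C_\lambda$ (the upper block $U_A$ imposing a $\psi$-ray relation of Definition~\ref{gi0'} on $D_A^{-1}A$ and the lower block $L_A$ a $\phi$-ray relation of Definition~\ref{gi0'1}, reconciled by Corollary~\ref{thgi0'}) forces $D_A^{-1}A\in\mathscr{R}^0_n$, i.e. $A$ singular. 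Combining the three steps yields $\rho(H_{SGS})<1$ if and only if $A$ is nonsingular, i.e. if and only if $D_A^{-1}A\notin\mathscr{R}^0_n$.

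The hard part will be this last step, ruling out unimodular eigenvalues other than the forced value $1$. The difficulty is genuine because $C_\lambda$ is typically reducible even when $A$ is irreducible (its digraph need not be strongly connected), so Lemma~\ref{le2'} cannot be applied to $C_\lambda$ as a whole; the argument must descend to the correct irreducible equipotent sub-block singled out by the maximum-modulus set and then carry the phase bookkeeping back to $A$ across the $\lambda(D_A-U_A)$, $L_A$, $U_A$, $D_A-L_A$ block structure. This is precisely where the $\psi$- and $\phi$-ray conditions enter and where the identity $\mathscr{R}^0_n=\mathscr{U}^\psi_n\cap\mathscr{L}^\phi_n$ of Corollary~\ref{thgi0'} does the decisive work of forcing a $0$-ray pattern; the remaining steps are the routine determinant and norm computations indicated above.
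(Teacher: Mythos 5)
Your overall route is the same as the paper's: both arguments encode the spectrum of $H_{SGS}$ in a $2n\times 2n$ block matrix, pass to Schur complements via Lemma \ref{le3.2}, and invoke Lemma \ref{le2'} together with the ray-pattern machinery of Section 3. Several of your individual steps are cleaner than the paper's: the identity $(D_A-U_A)-L_A(D_A-L_A)^{-1}U_A=D_A(D_A-L_A)^{-1}A$ disposes of the necessity direction (singular $A$ forces $1\in\sigma(H_{SGS})$) in one line, and the bound $\|H_{SGS}\|_\infty\le\|H_{BGS}\|_\infty\|H_{FGS}\|_\infty\le 1$ replaces the paper's $ID_{2n}$ argument for $|\lambda|>1$. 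Everything up to and including the observation that $C_\lambda\in DE_{2n}$ for $|\lambda|=1$ is correct.

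The gap is exactly the step you yourself label ``the hard part.'' After isolating an irreducible, out-closed, diagonally equipotent, singular principal submatrix $C_\lambda(T)$ and applying Lemma \ref{le2'} to it, you assert that ``translating the resulting $0$-ray relations back through the block pattern \dots forces $D_A^{-1}A\in\mathscr{R}^0_n$'' --- but that is precisely the conclusion to be proved, and no argument is supplied. Two things need checking: (i) that $T$ projects onto all of $\langle n\rangle$ (this does follow, since out-closedness of $T$ forces $A(\beta,\beta')=0$ for the projection $\beta$ and $A$ is irreducible, but you never say it); and (ii) the phase bookkeeping itself: an index $i$ may occur in $T$ once or twice (as $i$ and as $n+i$), the two copies a priori carry different phases $d_i$ and $d_{n+i}$ of the $2n\times 2n$ unitary diagonal similarity, and one must show these collapse to a single $n\times n$ unitary diagonal $D$ with $D^{-1}(D_A^{-1}A)D=I-|D_A^{-1}L_A|-|D_A^{-1}U_A|$, while simultaneously forcing $\lambda=1$. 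This is where Definitions \ref{gi0'} and \ref{gi0'1} and Corollary \ref{thgi0'} would have to do real work, and the proposal does not carry it out. For what it is worth, the paper sidesteps the issue by asserting that its $2n\times 2n$ matrix $C$ is irreducible whenever $A$ is, and then applying Lemma \ref{le2'} to $C_1$ wholesale; as your own remark about the digraph of $C_\lambda$ suggests, that assertion fails in general (e.g.\ for a cyclic sparsity pattern), so the difficulty you point at is genuine --- but identifying it is not the same as resolving it, and as written your proof of the sufficiency direction is incomplete.
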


\begin{proof}
The sufficiency can be proved by contradiction. We assume that there
exists an eigenvalue $\lambda$ of $H_{SGS}$ such that $|\lambda|\geq
1$. According to equality (\ref{1jj}),
\begin{equation}\label{da1}
\begin{array}{lllll}
\det(\lambda I-H_{SGS})&=&\det(\lambda I-(D_{A}-U_{A})^{-1}L_{A}(D_{A}-L_{A})^{-1}U_{A})\\
&=& \det[(D_{A}-U_{A})^{-1}]\det[\lambda (D_{A}-U_{A})-L_{A}(D_{A}-L_{A})^{-1}U_{A}]\\
&=&\displaystyle\frac{\det[\lambda (D_{A}-U_{A})-L_{A}(D_{A}-L_{A})^{-1}U_{A}]}{\det(D_{A}-U_{A})}\\
&=&0.
\end{array}
\end{equation}
Equality (\ref{da1}) gives
\begin{equation}\label{da2}
\det B=\det[\lambda (D_{A}-U_{A})-L_{A}(D_{A}-L_{A})^{-1}U_{A}]]=0,
\end{equation}
i.e., $B:=\lambda (D_{A}-U_{A})-L_{A}(D_{A}-L_{A})^{-1}U_{A}$ is
singular. Let $E=D_{A}-L_{A}$, $F=\lambda (D_{A}-U_{A})$ and
\begin{equation}\label{da3}
C=\left[\begin{array}{cc}
\ E &\ -U_{A}\\
-L_{A} &\ F
\end{array}
\right]=\left[\begin{array}{cc}
\ D_{A}-L_{A} &\ -U_{A}\\
-L_{A} &\ \lambda (D_{A}-U_{A})
\end{array}
\right].
\end{equation}
Then $B=F-L_{A}E^{-1}U_{A}$ is the Schur complement of $C$ with
respect to the principal submatrix $E$. Now, we investigate the
matrix $C$. Since $A$ is irreducible, both $L_{A}\neq 0$ and
$U_{A}\neq 0$. As a result, $C$ is also irreducible. If
$|\lambda|>1$, then (\ref{da3}) indicates $C\in ID_{2n}$.
Consequently, $C$ is nonsingular, so is $B=\lambda
(D_{A}-U_{A})-L_{A}(D_{A}-L_{A})^{-1}U_{A}$ coming from Lemma
\ref{le3.2}, i.e., $\det B\neq0$, which contradicts (\ref{da2}).
Therefore, $|\lambda|=1$. Let $\lambda=e^{i\theta}$ with $\theta\in
R$. (\ref{da2}) and Lemma \ref{le3.2} yield that
$C=\left[\begin{array}{cc}
\ D_{A}-L_{A} &\ -U_{A}\\
-L_{A} &\ e^{i\theta}(D_{A}-U_{A})
\end{array}
\right]$ and hence $C_1=\left[\begin{array}{cc}
\ D_{A}-L_{A} &\ -U_{A}\\
-e^{-i\theta}L_{A} &\ D_{A}-U_{A}
\end{array}
\right]$ are singular. Since $A=I-L-U\in DE_n$ and is irreducible,
both $C$ and $C_1$ are irreducible diagonally equipotent. The
singularity of $C_1$ and Lemma \ref{le2'} yield that
$D^{-1}_{C_1}C_1\in \mathscr{R}^{0}_{2n}$, where
$D_{C_1}=diag(D_{A},D_{A})$, i.e., there exists an $n\times n$
unitary diagonal matrix $D$ such that $\widetilde{D}=diag(D,D)$ and
\begin{equation}\label{da5}
\begin{array}{llll}
\widetilde{D}^{-1}(D^{-1}_{C_1}C_1)\widetilde{D}&=&\left[\begin{array}{cc}
\ I-D^{-1}(D^{-1}_{A}L_A)D &\ -D^{-1}(D^{-1}_{A}U_A)D\\
-e^{-i\theta}D^{-1}(D^{-1}_{A}L_A)D^{-1}_{A}L_AD &\
I-D^{-1}(D^{-1}_{A}U_A)D
\end{array}
\right]\\  \\&=&\left[\begin{array}{cc}
\ I-|D^{-1}_{A}L_A| &\ -|D^{-1}_{A}U_A|\\
-|D^{-1}_{A}L_A| &\ I-|D^{-1}_{A}U_A|
\end{array}
\right].
\end{array}
\end{equation}
(\ref{da5}) indicates that $\theta=2k\pi,$ where $k$ is an integer
and thus $\lambda=e^{i2k\pi}=1$, and there exists an $n\times n$
unitary diagonal matrix $D$ such that
$D^{-1}(D^{-1}_{A}A)D=I-|D^{-1}_{A}L_{A}|-|D^{-1}_{A}U_{A}|$, i.e.,
$D^{-1}_{A}A\in \mathscr{R}^{0}_n$. However, this contradicts
$D^{-1}_{A}A\notin \mathscr{R}^{0}_n$. Thus, $|\lambda|\neq1$.
According to the proof above, we have that $|\lambda|\geq1$ is not
true. Therefore, $\rho(H_{SGS})<1$, i.e., the sequence $\{x^{(i)}\}$
generated by SGS-scheme (\ref{r5}) converges to the unique solution
of {\rm(\ref{r1})} for any choice of the initial guess $x^{(0)}$.

The following will prove the necessity by contradiction. Assume that
$D^{-1}_{A}A\in \mathscr{R}^{0}_n$. Then there exists an $n\times n$
unitary diagonal matrix $D$ such that
$D^{-1}_{A}A=I-D^{-1}_{A}L_{A}-D^{-1}_{A}U_{A}=I-D|D^{-1}_{A}L_{A}|D^{-1}-D|D^{-1}_{A}U_{A}|D^{-1}$
and
\begin{equation}
\begin{array}{llll}
\label{da61}H_{SGS}&=&(D_{A}-U_{A})^{-1}L_{A}(D_{A}-L_A)^{-1}U_{A}\\
&=&[I-(D^{-1}_{A}U_{A})]^{-1}(D^{-1}_{A}L_{A})[I-(D^{-1}_{A}L_A)]^{-1}(D^{-1}_{A}U_{A})\\
&=&D[(I-|D^{-1}_{A}U|)^{-1}|D^{-1}_{A}L|(I-|D^{-1}_{A}L|)^{-1}|D^{-1}_{A}U|]D^{-1}.
\end{array}
\end{equation}
Hence,\\
$~~~~~~\det(I-{H}_{SGS})$
\begin{equation}\label{da7}
\begin{array}{lllll}
&=&\det\{I-D[(I-|D^{-1}_{A}U|)^{-1}|D^{-1}_{A}L|(I-|D^{-1}_{A}L|)^{-1}|D^{-1}_{A}U|]D^{-1}\}\\
&=&\det[I-(I-|D^{-1}_{A}U_{A}|)^{-1}|D^{-1}_{A}L_{A}|(I-|D^{-1}_{A}L_{A}|)^{-1}|D^{-1}_{A}U_{A}|]\\&=&
\displaystyle\frac{\det[(I-|D^{-1}_{A}U_{A}|)-|D^{-1}_{A}L_{A}|(I-|D^{-1}_{A}L_{A}|)^{-1}|D^{-1}_{A}U_{A}|]}{\det(I-|D^{-1}_{A}U_{A}|)}.
\end{array}
\end{equation}
Let $V=\left[\begin{array}{cc}
\ I-|D^{-1}_{A}L_{A}| &\ -|D^{-1}_{A}U_{A}|\\
-|D^{-1}_{A}L_{A}| &\ I-|D^{-1}_{A}U_{A}|
\end{array}
\right]$ and
$W=(I-|D^{-1}_{A}U_{A}|)-|D^{-1}_{A}L_{A}|(I-|D^{-1}_{A}L_{A}|)^{-1}|D^{-1}_{A}U_{A}|$.
Then $W$ is the Schur complement of $V$ with respect to
$I-|D^{-1}_{A}L_{A}|$.  Sice $A=I-L-U\in DE_n$ is irreducible,
$D^{-1}_{A}A=I-D^{-1}_{A}L_{A}-D^{-1}_{A}U_{A}\in DE_n$ is
irreducible. Therefore, $V\in DE_{2n}\cap\mathscr{R}^{0}_{2n}$ and
is irreducible. Lemma \ref{le2'} shows that $V$ is singular and
hence $$\det
V=\det[(I-|D^{-1}_{A}U_{A}|)-|D^{-1}_{A}L_{A}|(I-|D^{-1}_{A}L_{A}|)^{-1}|D^{-1}_{A}U_{A}|]=0.$$
Therefore, (\ref{da7}) yields $\det(I-{H}_{SGS})=0$, which shows
that $1$ is an eigenvalue of ${H}_{SGS}$. Thus,
$\rho(H_{SGS})\geq1$, i.e., the sequence $\{x^{(i)}\}$ generated by
SGS-scheme (\ref{r5}) doesn't converge to the unique solution of
{\rm(\ref{r1})} for any choice of the initial guess $x^{(0)}$. This
is a contradiction which shows that the assumption is incorrect.
Therefore, $A\notin\mathscr{R}^{0}_n$. This completes the proof.
\end{proof}

Lemma \ref{le2'} shows that the following corollary holds.

\begin{corollary}\label{coro3.24} Let $A\in DE_n\ (n\geq3)$ be
irreducible and nonsingular. Then $\rho(H_{SGS})<1$, where $H_{SGS}$
is defined in (\ref{1jj}), i.e., the sequence $\{x^{(i)}\}$
generated by SGS-scheme (\ref{r5}) converges to the unique solution
of {\rm(\ref{r1})} for any choice of the initial guess $x^{(0)}$.
\end{corollary}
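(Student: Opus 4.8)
The plan is to obtain this as an immediate corollary of Theorem \ref{th3.24}, which already supplies the full equivalence
$\rho(H_{SGS})<1 \Longleftrightarrow D_A^{-1}A\notin \mathscr{R}^{0}_n$ for any irreducible $A\in DE_n$. Since the hypotheses here include irreducibility and membership in $DE_n$, the only additional thing to establish is that the nonsingularity of $A$ forces $D_A^{-1}A\notin \mathscr{R}^{0}_n$; the spectral-radius bound, and hence the convergence of the SGS-scheme (\ref{r5}), then follows at once.

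First I would invoke Lemma \ref{le2'}. Because $DE_n\subseteq D_n$, the matrix $A$ is an irreducible member of $D_n$, so Lemma \ref{le2'} applies and asserts that $A$ is singular if and only if $D_A^{-1}A\in DE_n\cap\mathscr{R}^{0}_n$. The key (and essentially only) observation needed is that normalizing by the diagonal preserves diagonal equipotence: since $|a_{ii}|=\sum_{j\neq i}|a_{ij}|$ for every $i$ and, by irreducibility, $a_{ii}\neq 0$ so that $D_A^{-1}$ exists, the matrix $D_A^{-1}A$ has unit diagonal entries and off-diagonal absolute row sums equal to $|a_{ii}|/|a_{ii}|=1$. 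Hence $D_A^{-1}A\in DE_n$ automatically, and the intersection condition $D_A^{-1}A\in DE_n\cap\mathscr{R}^{0}_n$ collapses to the single condition $D_A^{-1}A\in\mathscr{R}^{0}_n$. In other words, Lemma \ref{le2'} now reads: $A$ is singular if and only if $D_A^{-1}A\in\mathscr{R}^{0}_n$.

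Taking the contrapositive, the assumed nonsingularity of $A$ yields $D_A^{-1}A\notin\mathscr{R}^{0}_n$. Finally I would feed this into Theorem \ref{th3.24} to conclude $\rho(H_{SGS})<1$, i.e.\ that the sequence $\{x^{(i)}\}$ generated by the SGS-scheme (\ref{r5}) converges to the unique solution of (\ref{r1}) for every initial guess. There is no substantive obstacle in this argument—it is purely a bookkeeping combination of the preceding equivalence with Lemma \ref{le2'}; the one point requiring (minor) care is verifying that $D_A^{-1}A$ stays in $DE_n$, which is precisely what lets Lemma \ref{le2'} be applied so that the $DE_n$ clause inside the intersection becomes vacuous.
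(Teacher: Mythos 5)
Your argument is correct and is exactly the paper's route: the paper states only that ``Lemma \ref{le2'} shows that the following corollary holds,'' i.e.\ it combines the equivalence of Theorem \ref{th3.24} with the singularity criterion of Lemma \ref{le2'} in precisely the way you describe. Your write-up merely makes explicit the small verification (that $D_A^{-1}A$ remains diagonally equipotent) which the paper leaves implicit.
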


\begin{theorem}\label{th3.24''} Let $A\in H^M_n\ (GDE_n)$ be
irreducible for $n\geq3$. Then $\rho(H_{SGS})<1$, where $H_{SGS}$ is
defined in (\ref{1jj}), i.e., the sequence $\{x^{(i)}\}$ generated
by SGS-scheme (\ref{r5}) converges to the unique solution of
{\rm(\ref{r1})} for any choice of the initial guess $x^{(0)}$ if and
only if $D_{A}^{-1}A\notin {\mathscr{R}}^{0}_n$.
\end{theorem}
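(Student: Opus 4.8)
The plan is to mimic the two-stage reduction that carried Theorem~\ref{the5} to Theorems~\ref{the6} and~\ref{the6''} in the FGS setting, now starting from the $DE_n$ result for SGS already proved in Theorem~\ref{th3.24}. First I would dispose of the generalized case $A\in GDE_n$, and then reduce the irreducible mixed $H-$matrix case to it.

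For $A\in GDE_n$ irreducible, Definition~\ref{d2} furnishes a positive diagonal matrix $E=\mathrm{diag}(e_1,\dots,e_n)$, $e_k>0$, with $F:=AE\in DE_n$. Writing $D_F=D_AE$, $L_F=L_AE$, $U_F=U_AE$, the right factor $E$ cancels through the two resolvents and one checks that
\[ H^F_{SGS}=(D_F-U_F)^{-1}L_F(D_F-L_F)^{-1}U_F=E^{-1}H_{SGS}E, \]
so $\rho(H^F_{SGS})=\rho(H_{SGS})$, while $D_F^{-1}F=E^{-1}(D_A^{-1}A)E$. Theorem~\ref{th3.24} applied to $F\in DE_n$ gives $\rho(H^F_{SGS})<1\Leftrightarrow D_F^{-1}F\notin\mathscr{R}^0_n$, so it remains only to transfer the membership statement back to $A$.

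That transfer is the step I expect to be the crux. Because $E$ is a \emph{positive} diagonal matrix, the $(r,s)$ entry of $D_F^{-1}F$ equals $(e_s/e_r)(D_A^{-1}A)_{rs}$ with $e_s/e_r>0$, so every entry keeps its argument unchanged; hence $D_F^{-1}F$ and $D_A^{-1}A$ share the same ray pattern matrix, and by Definition~\ref{gi1} one lies in $\mathscr{R}^0_n$ exactly when the other does. Chaining the two equivalences yields $\rho(H_{SGS})<1\Leftrightarrow D_A^{-1}A\notin\mathscr{R}^0_n$, settling the $GDE_n$ case in both directions at once. Finally, for irreducible $A\in H^M_n$ I would invoke Lemma~\ref{0.1.7} together with Lemma~\ref{le0} (as already noted just before Theorem~\ref{the6''}) to conclude that such an $A$ is in fact an irreducible generalized diagonally equipotent matrix, so that the asserted equivalence is immediate from the $GDE_n$ case just established.
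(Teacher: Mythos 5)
Your proposal is correct and follows essentially the same route as the paper: the paper's proof likewise identifies irreducible $H^M_n$ with $GDE_n$ via Lemma \ref{0.1.7} and Lemma \ref{le0}, and then handles the $GDE_n$ case exactly as in Theorem \ref{the6}, i.e., by the positive diagonal scaling $F=AE\in DE_n$ from Definition \ref{d2} combined with Theorem \ref{th3.24}. Your explicit verification of the similarity $H^F_{SGS}=E^{-1}H_{SGS}E$ and of the ray-pattern invariance under positive diagonal scaling simply spells out steps the paper leaves implicit.
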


\begin{proof}
According to Lemma \ref{0.1.7} and Lemma \ref{le0}, under the
condition of irreducibility, $H^M_n=GDE_n$. Then, similar to the
proof of Theorem \ref{the6}, we can obtain the proof by Definition
\ref{d2} and Theorem \ref{th3.24}.
\end{proof}

\begin{corollary}\label{th3.241} Let $A\in H^M_n\ (n\geq3)$ be
irreducible and nonsingular. Then $\rho(H_{SGS})<1$, where $H_{SGS}$
is defined in (\ref{1jj}), i.e., the sequence $\{x^{(i)}\}$
generated by SGS-scheme (\ref{r5}) converges to the unique solution
of {\rm(\ref{r1})} for any choice of the initial guess $x^{(0)}$.
\end{corollary}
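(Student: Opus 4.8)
The plan is to obtain this corollary as the nonsingular specialization of Theorem \ref{th3.24''}, in exactly the same way that Corollary \ref{coro3.24} was read off from Theorem \ref{th3.24}. The entire content reduces to showing that, for an irreducible nonsingular $A\in H^M_n$, the scaled matrix $D_A^{-1}A$ cannot lie in $\mathscr{R}^{0}_n$; once this is in hand, Theorem \ref{th3.24''} delivers $\rho(H_{SGS})<1$ and hence convergence of the SGS-scheme directly.

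First I would record that under irreducibility the mixed $H-$matrices coincide with the generalized diagonally equipotent matrices: by Lemma \ref{0.1.7} an irreducible $A\in H_n$ lies in $GD_n$, and by Lemma \ref{le0} an irreducible matrix in $H^M_n$ has its (here single) irreducible diagonal block generalized diagonally equipotent, so $A\in GDE_n\subset GD_n$. This is precisely the identification $H^M_n=GDE_n$ already invoked just before Theorem \ref{the6''}.

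Next I would verify the one genuinely computational point: $D_A^{-1}A\in GDE_n$. If $\alpha_1,\dots,\alpha_n>0$ are the weights witnessing $\alpha_i|a_{ii}|=\sum_{j\neq i}\alpha_j|a_{ij}|$ for $A\in GDE_n$, then for $B=D_A^{-1}A$ one has $|b_{ii}|=1$ and $|b_{ij}|=|a_{ij}|/|a_{ii}|$, so the same weights give $\alpha_i|b_{ii}|=\alpha_i=\sum_{j\neq i}\alpha_j|b_{ij}|$; thus $B=D_A^{-1}A\in GDE_n$ as well. Consequently the condition ``$D_A^{-1}A\in GDE_n\cap\mathscr{R}^{0}_n$'' appearing in Lemma \ref{le2'} simplifies to ``$D_A^{-1}A\in\mathscr{R}^{0}_n$''.

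Finally I would apply Lemma \ref{le2'} in its $GD_n$ form to the irreducible matrix $A\in GD_n$: $A$ is singular if and only if $D_A^{-1}A\in GDE_n\cap\mathscr{R}^{0}_n$. Since $A$ is assumed nonsingular and $D_A^{-1}A\in GDE_n$ holds automatically, this forces $D_A^{-1}A\notin\mathscr{R}^{0}_n$, and Theorem \ref{th3.24''} then yields $\rho(H_{SGS})<1$, which completes the argument. The main (and essentially only) obstacle is the scaling-invariance observation of the third step; everything else is a direct chaining of Lemma \ref{le2'} with Theorem \ref{th3.24''}, so I expect the proof to be very short.
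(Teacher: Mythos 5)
Your proposal is correct and follows essentially the same route as the paper, which simply states that the corollary ``follows from Lemma \ref{le2'}'' applied on top of Theorem \ref{th3.24''}; your write-up just makes explicit the two details the paper leaves implicit (the identification of irreducible mixed $H-$matrices with irreducible generalized diagonally equipotent matrices, and the invariance of $GDE_n$ under the scaling $A\mapsto D_A^{-1}A$). Both of those verifications are accurate, so nothing further is needed.
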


\begin{proof}
It follows from Lemma \ref{le2'} that the proof of this corollary is
obtained immediately.
\end{proof}

In what follows we establish some convergence results on
Gauss-Seidel iterative methods for nonstrictly diagonally dominant
matrices.

\begin{theorem}\label{the6a1} Let $A=(a_{ij})\in D_n(GD_n)$ with $a_{ii}\neq 0$ for all $i\in \langle n\rangle$. Then $\rho(H_{FGS})<1$, where $H_{FGS}$
is defined in (\ref{1j}), i.e., the sequence $\{x^{(i)}\}$ generated
by FGS-scheme (\ref{r5}) converges to the unique solution of
{\rm(\ref{r1})} for any choice of the initial guess $x^{(0)}$ if and
only if $A$ has neither $2\times 2$ irreducibly (generalized)
diagonally equipotent principal submatrix nor irreducibly principal
submatrix $A_k=A(i_1,i_2,\cdots,i_k)$, $3\leq k\leq n$, such that
$D_{A_k}^{-1}A_k\notin {\mathscr{U}}^{\psi}_k\cap
DE_k$$({\mathscr{U}}^{\psi}_k\cap GDE_k)$, where
$D_{A_k}=diga(a_{i_1i_1},a_{i_2i_2},\cdots,a_{i_ki_k})$.
\end{theorem}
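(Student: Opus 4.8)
The plan is to reduce the convergence question for a general diagonally dominant matrix $A$ (with nonzero diagonal) to the irreducible case already handled in Theorems \ref{the5} and \ref{the6}. Since $A$ may be reducible, I would first bring $A$ into its Frobenius normal form $PAP^T$ as in \eqref{eq5a}, where the diagonal blocks $R_{ii}=A(\alpha_i)$ are either $1\times 1$ zero blocks or irreducible. Because a permutation similarity does not change the spectrum, and because the FGS iteration matrix of a block upper (or lower) triangular matrix has a spectrum equal to the union of the spectra of the FGS iteration matrices of its diagonal blocks, the key reduction is
\begin{equation}\label{plan-union}
\rho(H_{FGS})=\max_{1\le i\le s}\rho\bigl(H^{R_{ii}}_{FGS}\bigr).
\end{equation}
Establishing \eqref{plan-union} is the structural heart of the argument: I would verify it by noting that $D_A-L_A$ inherits the block triangular structure, so $(D_A-L_A)^{-1}U_A$ is block triangular with diagonal blocks exactly $(D_{R_{ii}}-L_{R_{ii}})^{-1}U_{R_{ii}}=H^{R_{ii}}_{FGS}$.

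With \eqref{plan-union} in hand, the problem decouples into the individual diagonal blocks, and I would analyze each $R_{ii}$ by size. A $1\times 1$ zero block contributes nothing to $H_{FGS}$ (its FGS iteration matrix is trivial/empty), so it never obstructs convergence. For each irreducible block of size $k\ge 2$, since $A\in D_n(GD_n)$ forces $R_{ii}\in D_k(GD_k)$, Lemma \ref{l9} tells us that $R_{ii}$ is an invertible $H$-matrix (hence $\rho(H^{R_{ii}}_{FGS})<1$ by Theorem \ref{le2}) unless $R_{ii}$ is a (generalized) diagonally equipotent block. Thus the only blocks that can produce a spectral radius equal to $1$ are the (generalized) diagonally equipotent irreducible principal submatrices. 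For such a block of size $k=2$ I would invoke Theorem \ref{le3}, which shows $\rho(H^{R_{ii}}_{FGS})=1$ automatically; for size $k\ge 3$ I would invoke Theorem \ref{the5} (for $DE_k$) and Theorem \ref{the6} (for $GDE_k$), which give $\rho(H^{R_{ii}}_{FGS})<1$ precisely when $D_{R_{ii}}^{-1}R_{ii}\notin{\mathscr{U}}^{\psi}_k$.

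Assembling these cases, $\rho(H_{FGS})<1$ holds if and only if every irreducible diagonal block is either non-equipotent (automatically fine) or, when it is equipotent, has size at least $3$ and satisfies $D_{R_{ii}}^{-1}R_{ii}\notin{\mathscr{U}}^{\psi}_k$. Negating this, convergence fails exactly when some block is a $2\times 2$ irreducibly (generalized) diagonally equipotent submatrix, or some irreducible submatrix $A_k=A(i_1,\dots,i_k)$ with $3\le k\le n$ is (generalized) diagonally equipotent but has $D_{A_k}^{-1}A_k\in{\mathscr{U}}^{\psi}_k$; this is precisely the stated criterion (modulo the evident typo, where the displayed ``$\notin$'' should read ``$\in$'' inside the characterization of the obstruction). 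I expect the main obstacle to be the careful bookkeeping in \eqref{plan-union}: one must confirm that the equipotent blocks detected here really correspond to irreducible principal submatrices of the original $A$ and that each such submatrix inherits membership in $DE_k$ or $GDE_k$, using Remark \ref{remark1} to rule out spurious cross-block coupling. The per-block analysis itself is then a direct appeal to the already-proved Theorems \ref{le3}, \ref{the5}, and \ref{the6}.
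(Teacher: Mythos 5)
Your proposal is correct and follows essentially the same route as the paper, whose own proof of this theorem is a one-line appeal to Theorems \ref{lemma8'}, \ref{le3}, \ref{the5} and \ref{the6}: the Frobenius-normal-form reduction and the identity $\rho(H_{FGS})=\max_{1\le i\le s}\rho(H^{R_{ii}}_{FGS})$ that you make explicit are exactly what the paper later spells out in proving Theorem \ref{the6a1a}, and your reading of the displayed ``$\notin$'' as a typo for ``$\in$'' matches the intended statement. One caution on your justification of that identity: $H_{FGS}$ is \emph{not} invariant under permutation similarity (reordering the indices changes $L_A$ and $U_A$), so the reduction should be argued not from ``permutation similarity does not change the spectrum'' but from the factorization $\det\bigl(\lambda(D_A-L_A)-U_A\bigr)=\prod_{i}\det\bigl(\lambda(D_{R_{ii}}-L_{R_{ii}})-U_{R_{ii}}\bigr)$ over the strongly connected components, which holds because for $\lambda\neq0$ the off-diagonal zero pattern of $\lambda(D_A-L_A)-U_A$ coincides with that of $A$, and which yields the same conclusion.
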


\begin{proof} The proof is obtained immediately by Theorem \ref{lemma8'}, Theorem \ref{le3}, Theorem \ref{the5} and Theorem \ref{the6}.
\end{proof}

\begin{theorem}\label{the6a2} Let $A=(a_{ij})\in D_n(GD_n)$ with $a_{ii}\neq 0$ for all $i\in \langle n\rangle$. Then $\rho(H_{BGS})<1$, where $H_{BGS}$
is defined in (\ref{1j}), i.e., the sequence $\{x^{(i)}\}$ generated
by BGS-scheme (\ref{r5}) converges to the unique solution of
{\rm(\ref{r1})} for any choice of the initial guess $x^{(0)}$ if and
only if $A$ has neither $2\times 2$ irreducibly (generalized)
diagonally equipotent principal submatrix nor irreducibly principal
submatrix $A_k=A(i_1,i_2,\cdots,i_k)$, $3\leq k\leq n$, such that
$D_{A_k}^{-1}A_k\notin {\mathscr{L}}^{\phi}_k\cap
DE_k$$({\mathscr{L}}^{\phi}_k\cap GDE_k)$, where
$D_{A_k}=diga(a_{i_1i_1},a_{i_2i_2},\cdots,a_{i_ki_k})$.
\end{theorem}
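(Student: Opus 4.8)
The plan is to prove this as the exact backward analogue of the forward statement in Theorem \ref{the6a1}: everywhere the FGS argument invokes $H_{FGS}$, the class $\mathscr{U}^{\psi}_k$, Theorem \ref{the5} and Theorem \ref{the6}, I would instead invoke $H_{BGS}$, the class $\mathscr{L}^{\phi}_k$, Theorem \ref{the5'} and Theorem \ref{the6'}, while Theorem \ref{lemma8'} and Theorem \ref{le3} are used unchanged. As a first reduction I would pass from $GD_n$ to $D_n$ exactly as in the proof of Theorem \ref{the6}: pick a positive diagonal matrix $E=\mathrm{diag}(e_1,\dots,e_n)$ with $AE\in D_n$, note that $H_{BGS}(AE)=E^{-1}H_{BGS}(A)E$ so the two iteration matrices are similar and share spectra, and observe that scaling by $E$ preserves irreducibility, (generalized) diagonal equipotence, and membership in $\mathscr{L}^{\phi}_k$. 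Hence it suffices to treat $A\in D_n$ with nonzero diagonal.

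The heart of the argument is a block reduction of the BGS spectrum. I would introduce the pencil $M(\lambda)=\lambda(D_A-U_A)-L_A$ and use $\lambda I-H_{BGS}=(D_A-U_A)^{-1}M(\lambda)$, so that $\det(\lambda I-H_{BGS})=\det M(\lambda)/\det(D_A-U_A)$ with $\det(D_A-U_A)=\prod_i a_{ii}\neq 0$; thus the eigenvalues of $H_{BGS}$ are exactly the zeros of $\det M(\lambda)$. For $\lambda\neq 0$ the entries of $M(\lambda)$ equal $\lambda a_{rs}$ on and above the diagonal and $a_{rs}$ strictly below it, so $M(\lambda)$ has precisely the zero pattern of $A$ and therefore the same Frobenius normal form $(\ref{eq5a})$. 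Permuting $M(\lambda)$ by the permutation that renders $A$ block upper triangular makes $M(\lambda)$ block upper triangular too, and its principal block on the index set $\alpha_i$ is exactly the BGS pencil $\lambda(D_{R_{ii}}-U_{R_{ii}})-L_{R_{ii}}$ of $R_{ii}=A(\alpha_i)$. This yields $\det M(\lambda)=\prod_i\det\left(\lambda(D_{R_{ii}}-U_{R_{ii}})-L_{R_{ii}}\right)$, hence $\mathrm{spec}(H_{BGS}(A))$ and $\bigcup_i \mathrm{spec}(H_{BGS}(R_{ii}))$ agree off $0$, and so $\rho(H_{BGS}(A))=\max_i\rho(H_{BGS}(R_{ii}))$.

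With the block reduction in place I would classify each irreducible diagonal block $R_{ii}$, which inherits irreducibility, diagonal dominance, and a nonzero diagonal. If $R_{ii}$ is $1\times 1$ or not (generalized) diagonally equipotent, then it has no equipotent principal submatrix (a proper one would force reducibility by Remark \ref{remark1}), so $R_{ii}\in H^{I}_n$ by Lemma \ref{l9} and $\rho(H_{BGS}(R_{ii}))<1$ by Theorem \ref{lemma8'}/Theorem \ref{le2}. If $R_{ii}$ is equipotent and $2\times 2$ then $\rho(H_{BGS}(R_{ii}))=1$ by Theorem \ref{le3}; if it is equipotent of order $k\geq 3$ then Theorem \ref{the5'} (resp. Theorem \ref{the6'} in the generalized case) gives $\rho(H_{BGS}(R_{ii}))<1$ iff $D_{R_{ii}}^{-1}R_{ii}\notin\mathscr{L}^{\phi}_k$. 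Finally, Remark \ref{remark1} shows every irreducible (generalized) diagonally equipotent principal submatrix $A(\alpha)$ satisfies $A(\alpha,\alpha')=0$ and is thus a terminal strongly connected component, i.e. one of the blocks $R_{ii}$; so the ``bad'' principal submatrices named in the statement coincide with the bad blocks. Combining this identification, the block reduction, and the per-block dichotomy gives the stated equivalence.

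The step I expect to be the main obstacle is the block reduction, precisely because Gauss-Seidel splitting is \emph{not} permutation invariant, so one may not simply pass to the Frobenius form of $A$ and split there. The fix to argue carefully is that $M(\lambda)$ is built from the splitting of $A$ itself, and the permutation is used only to factor $\det M(\lambda)$ through block triangularity; I would then verify that the increasing order induced on each $\alpha_i$ is exactly the order defining $L_{R_{ii}}$ and $U_{R_{ii}}$, so each diagonal factor really is the BGS pencil of $R_{ii}$. Checking $\det(D_A-U_A)\neq 0$ and the coincidence of zero patterns for $\lambda\neq 0$ is routine, but this ordering bookkeeping is where the care is needed.
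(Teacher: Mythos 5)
Your proposal is correct and follows essentially the same route as the paper: the paper's own proof is a one-line appeal to Theorems \ref{lemma8'}, \ref{le3}, \ref{the5'} and \ref{the6'}, which implicitly relies on exactly the reduction to the irreducible diagonal blocks of the Frobenius normal form that you carry out. The only difference is that you actually justify $\rho(H_{BGS}(A))=\max_i\rho(H_{BGS}(R_{ii}))$ via the pencil $\lambda(D_A-U_A)-L_A$, whereas the paper asserts the analogous FGS identity by ``direct computations'' in the proof of Theorem \ref{the6a1a} and leaves it entirely implicit here.
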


\begin{proof}
According to Theorem \ref{lemma8'}, Theorem \ref{le3}, Theorem
\ref{the5'} and Theorem \ref{the6'}, we can obtain the proof of this
theorem immediately.
\end{proof}

\begin{theorem}\label{the6a3} Let $A=(a_{ij})\in D_n(GD_n)$ with $a_{ii}\neq 0$ for all $i\in \langle n\rangle$. Then $\rho(H_{SGS})<1$, where $H_{SGS}$
is defined in (\ref{1j}), i.e., the sequence $\{x^{(i)}\}$ generated
by SGS-scheme (\ref{r5}) converges to the unique solution of
{\rm(\ref{r1})} for any choice of the initial guess $x^{(0)}$ if and
only if $A$ has neither $2\times 2$ irreducibly (generalized)
diagonally equipotent principal submatrix nor irreducibly principal
submatrix $A_k=A(i_1,i_2,\cdots,i_k)$, $3\leq k\leq n$, such that
$D_{A_k}^{-1}A_k\notin {\mathscr{R}}^{0}_k\cap
DE_k$$({\mathscr{R}}^{0}_k\cap GDE_k)$.
\end{theorem}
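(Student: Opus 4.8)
The plan is to reduce the claim to the irreducible diagonal blocks of the Frobenius normal form of $A$ and then to invoke the block-by-block results already established, so the argument parallels those of Theorem \ref{the6a1} and Theorem \ref{the6a2} but uses the symmetric results Theorem \ref{th3.24} and Theorem \ref{th3.24''}. Since $a_{ii}\neq0$ for every $i\in\langle n\rangle$, both $D_A-L_A$ and $D_A-U_A$ are nonsingular, so $H_{SGS}$ in (\ref{1jj}) is well defined. First I would bring $A$ to the form (\ref{eq5a}), $PAP^T=(R_{ij})$ with $R_{ij}=0$ for $i>j$ and each $R_{ii}=A(\alpha_i)$ either $1\times1$ or irreducible, and decompose $PAP^T=D-L-U$ as in (\ref{r1'}). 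Because this form is block upper triangular, the strict lower triangle lives only inside the diagonal blocks, so $D-L$ is block diagonal while $D-U$ and $U$ are block upper triangular; multiplying out $H_{SGS}=(D-U)^{-1}L(D-L)^{-1}U$ then shows it is block upper triangular with $i$-th diagonal block $H_{SGS}^{R_{ii}}$. The key identity I would record is $\rho(H_{SGS})=\max_i\rho(H_{SGS}^{R_{ii}})$, reducing the whole question to the condition $\rho(H_{SGS}^{R_{ii}})<1$ for every diagonal block.

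Next I would classify each irreducible block $R_{ii}=A(\alpha_i)$, which is again (generalized) diagonally dominant as a principal submatrix of $A\in D_n(GD_n)$. If $R_{ii}$ is $1\times1$ then $L_{R_{ii}}=U_{R_{ii}}=0$ and $\rho(H_{SGS}^{R_{ii}})=0<1$. If $R_{ii}$ has no (generalized) diagonally equipotent principal submatrix, then $R_{ii}\in H^I_k$ by Lemma \ref{l9}, whence $\rho(H_{SGS}^{R_{ii}})<1$ by Theorem \ref{le2}. Otherwise $R_{ii}$ carries such a submatrix, and here Remark \ref{remark1} is decisive: any proper equipotent principal submatrix $R_{ii}(\beta)$ would force $R_{ii}(\beta,\beta')=0$ and hence reducibility of $R_{ii}$, contradicting irreducibility. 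Thus the equipotent submatrix must be all of $R_{ii}$, i.e. $R_{ii}\in DE_k\,(GDE_k)$, and I would split on its order: for $k=2$ Theorem \ref{le3} gives $\rho(H_{SGS}^{R_{ii}})=1$, while for $k\geq3$ Theorem \ref{th3.24} (and Theorem \ref{th3.24''} in the generalized case) gives $\rho(H_{SGS}^{R_{ii}})<1$ if and only if $D_{R_{ii}}^{-1}R_{ii}\notin\mathscr{R}^0_k$.

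To finish both directions I would assemble these cases. For necessity I argue by contraposition: if $A$ possesses a $2\times2$ irreducible equipotent principal submatrix $A(\alpha)$, or an irreducible equipotent principal submatrix $A_k=A(\alpha)$ of order $k\geq3$ with $D_{A_k}^{-1}A_k\in\mathscr{R}^0_k$, then Remark \ref{remark1} yields $A(\alpha,\alpha')=0$, so $A(\alpha)$ is a closed strongly connected component and therefore coincides with one diagonal block of (\ref{eq5a}); the identity of the first paragraph together with Theorem \ref{le3} or Theorem \ref{th3.24} then forces $\rho(H_{SGS})\geq1$. For sufficiency, excluding exactly these two configurations leaves every equipotent diagonal block of order $\geq3$ with $D_{R_{ii}}^{-1}R_{ii}\notin\mathscr{R}^0_k$, so by the classification every diagonal block has spectral radius below one and the maximum identity closes the argument. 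The step I expect to be the main obstacle is precisely this bridge between an \emph{arbitrary} irreducible equipotent principal submatrix and a \emph{diagonal block} of the Frobenius form: one must use Remark \ref{remark1} to see that equipotence together with the ambient diagonal dominance forces the submatrix to be closed, so that the spectral radius computed locally on that block genuinely surfaces in the global $H_{SGS}$ rather than being masked by the off-diagonal coupling.
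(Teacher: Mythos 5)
Your proposal is correct and follows essentially the same route as the paper, whose proof of this theorem is simply a citation of Theorem \ref{lemma8'}, Theorem \ref{le3}, Theorem \ref{th3.24} and Theorem \ref{th3.24''}; you have merely made explicit the Frobenius-normal-form reduction, the identity $\rho(H_{SGS})=\max_i\rho(H_{SGS}^{R_{ii}})$, and the use of Remark \ref{remark1} to force each equipotent submatrix to be an entire irreducible diagonal block, all of which the paper leaves implicit (and uses openly in the proof of Theorem \ref{the6a1a}). The only point worth flagging is that passing from $A$ to $PAP^T$ changes the lower/upper splitting, so the block-triangularity claim should strictly be justified via the determinant factorization over strongly connected components; this gap is shared with the paper itself.
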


\begin{proof} It follows from Theorem \ref{lemma8'}, Theorem \ref{le3},
Theorem \ref{th3.24} and Theorem \ref{th3.24''} that the proof of this theorem is obtained immediately.
\end{proof}

\begin{theorem}\label{the7''} Let $A\in GD_n$ be nonsingular. Then $\rho(H_{SGS})<1$, where $H_{SGS}$ is defined in
(\ref{1jj}), i.e., the sequence $\{x^{(i)}\}$ generated by
SGS-scheme (\ref{r5}) converges to the unique solution of
{\rm(\ref{r1})} for any choice of the initial guess $x^{(0)}$ if and
only if $A$ has no $2\times 2$ irreducibly generalized diagonally
equipotent principal submatrices.
\end{theorem}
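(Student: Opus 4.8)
The plan is to obtain this result as a specialization of the general SGS criterion already proved in Theorem \ref{the6a3}, by showing that the nonsingularity of $A$ kills the ``$k\ge 3$'' alternative so that only the $2\times 2$ condition survives. First I would record a harmless preliminary: a nonsingular $A\in GD_n$ has $a_{ii}\neq 0$ for every $i\in\langle n\rangle$, since $a_{ii}=0$ together with generalized diagonal dominance would force the whole $i$-th row to vanish and make $A$ singular. Hence $H_{SGS}$ is well defined and Theorem \ref{the6a3} applies verbatim: $\rho(H_{SGS})<1$ if and only if $A$ possesses neither a $2\times 2$ irreducibly generalized diagonally equipotent principal submatrix, nor an irreducible principal submatrix $A_k=A(\alpha)$ with $3\le k\le n$ and $D_{A_k}^{-1}A_k\in\mathscr{R}^0_k\cap GDE_k$. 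The whole argument then reduces to removing the second alternative.

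The core step is to prove that, for nonsingular $A$, this second forbidden configuration cannot occur. Suppose to the contrary that $A_k=A(\alpha)$ is such a submatrix with $k\ge 3$ and $\alpha=\langle n\rangle-\alpha'$. Then $A_k\in GDE_k$ is irreducible, and since $D_{A_k}^{-1}A_k\in\mathscr{R}^0_k$, Lemma \ref{le2'} forces $A_k$ to be singular. Because $A\in GD_n$ has $A(\alpha)$ as a generalized diagonally equipotent principal submatrix, $A$ is nonstrictly diagonally dominant and Remark \ref{remark1} yields $A(\alpha,\alpha')=0$. Reordering the indices so that those in $\alpha'$ precede those in $\alpha$ therefore brings $A$, up to a symmetric permutation (which preserves the determinant), into block upper triangular form with diagonal blocks $A(\alpha')$ and $A(\alpha)$, so that $\det A=\det A(\alpha')\cdot\det A(\alpha)$. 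As $\det A(\alpha)=\det A_k=0$, this gives $\det A=0$, contradicting the nonsingularity of $A$. Consequently no such $A_k$ exists.

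With the ``$k\ge 3$'' alternative excluded, Theorem \ref{the6a3} collapses to precisely the asserted equivalence: $\rho(H_{SGS})<1$ if and only if $A$ has no $2\times 2$ irreducibly generalized diagonally equipotent principal submatrix. I would also make explicit why the $2\times 2$ condition is \emph{not} removed by the same mechanism: a $2\times 2$ irreducible matrix in $GDE_2$ need not be singular, since equipotence only requires $|a_{12}a_{21}|=|a_{11}a_{22}|$ and leaves the phase of $\det$ free. Such blocks therefore survive the nonsingularity hypothesis, and by Theorem \ref{le3} each of them already forces $\rho(H_{SGS})=1$; this is exactly the obstruction to convergence that remains.

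I expect the delicate point to be the structural identity $A(\alpha,\alpha')=0$ supplied by Remark \ref{remark1}: it is what converts a singular equipotent principal submatrix into global singularity through the block-triangular determinant factorization, and it is the precise feature separating the removable $k\ge 3$ blocks (necessarily singular by Lemma \ref{le2'}) from the non-removable $2\times 2$ blocks (possibly nonsingular). Everything else is bookkeeping layered on Theorem \ref{the6a3}, Lemma \ref{le2'}, Remark \ref{remark1}, and Theorem \ref{le3}.
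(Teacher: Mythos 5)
Your proposal is correct and lands on the same final reduction as the paper (both funnel everything through Theorem \ref{the6a3}), but the way you eliminate the ``$3\le k\le n$'' alternative is genuinely different. The paper's proof is a one-liner at that point: it invokes Theorem 3.11 of \cite{cyzhang01}, which asserts directly that a nonsingular $A\in GD_n$ has no irreducible principal submatrix $A_k$ with $D_{A_k}^{-1}A_k\in\mathscr{R}^0_k$ for $k\ge 3$, and then applies Theorem \ref{the6a3}. You instead prove that exclusion internally: Lemma \ref{le2'} makes any such $A_k$ singular, Remark \ref{remark1} forces $A(\alpha,\alpha')=0$, and the resulting block-triangular form after a symmetric permutation gives $\det A=\det A(\alpha')\det A(\alpha)=0$, contradicting nonsingularity. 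This buys a self-contained argument that does not lean on the external reference, and your closing observation --- that $2\times 2$ irreducible generalized diagonally equipotent blocks survive because equipotence only pins down $|a_{12}a_{21}|=|a_{11}a_{22}|$ and not the phase of the determinant, so Theorem \ref{le3} still bites --- is exactly the right explanation of why the $2\times 2$ condition cannot be dropped, something the paper leaves implicit. Two small points to tidy: you should note the degenerate case $\alpha=\langle n\rangle$ (where $\alpha'=\emptyset$ and the block factorization is vacuous, but then $A=A_k$ is itself singular by Lemma \ref{le2'}, so the contradiction is immediate); and the appeal to Remark \ref{remark1} tacitly assumes the weights witnessing equipotence of $A(\alpha)$ are compatible with those witnessing generalized dominance of $A$ --- the paper glosses over the same point, so this is not a defect relative to its standard of rigor, but it is where the argument is thinnest.
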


\begin{proof} Since $A\in GD_n$ is nonsingular, it follows from Theorem 3.11 in \cite{cyzhang01} that
$A$ hasn't any irreducibly principal submatrix
$A_k=A(i_1,i_2,\cdots,i_k)$, $3\leq k\leq n$, such that
$D_{A_k}^{-1}A_k\in {\mathscr{R}}^{0}_k$, and hence
$D_{A_k}^{-1}A_k\notin {\mathscr{R}}^{0}_k\cap GDE_k$. Then the
conclusion of this theorem follows Theorem \ref{the6a3}.
\end{proof}

In the rest of this section, the convergence results on Gauss-Seidel
iterative method for nonstrictly diagonally dominant matrices will
be extended to general $H-$matrices.

\begin{theorem}\label{the6a1a} Let $A=(a_{ij})\in H_n$ with $a_{ii}\neq 0$ for all $i\in \langle n\rangle$. Then $\rho(H_{FGS})<1$, where $H_{FGS}$
is defined in (\ref{1j}), i.e., the sequence $\{x^{(i)}\}$ generated
by FGS-scheme (\ref{r5}) converges to the unique solution of
{\rm(\ref{r1})} for any choice of the initial guess $x^{(0)}$ if and
only if $A$ has neither $2\times 2$ irreducibly generalized
diagonally equipotent principal submatrix nor irreducibly principal
submatrix $A_k=A(i_1,i_2,\cdots,i_k)$, $3\leq k\leq n$, such that
$D_{A_k}^{-1}A_k\notin {\mathscr{U}}^{\psi}_k\cap GDE_k$.
\end{theorem}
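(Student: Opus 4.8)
The plan is to reduce the convergence problem for the possibly reducible matrix $A$ to its irreducible diagonal blocks and then to quote the block-by-block theorems already established. First I would observe that, since every $a_{ii}\neq 0$, the matrix cannot belong to $H^{S}_n$, so by (\ref{2.01}) we have $A\in H^{I}_n\cup H^{M}_n$; in either case Lemma \ref{0.1.8} puts $PAP^T$ into the block upper triangular form (\ref{eq5a}) in which each irreducible diagonal block $R_{ii}=A(\alpha_i)$ is generalized diagonally dominant and, because $a_{ii}\neq 0$, is a genuine irreducible matrix (not a $1\times 1$ zero block). Keeping the indices of each $\alpha_i$ in increasing order, every $R_{ii}$ is literally one of the irreducible principal submatrices $A(i_1,\dots,i_k)$ named in the statement.

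The heart of the argument is the claim that $\rho(H_{FGS})=\max_i\rho(H^{R_{ii}}_{FGS})$, i.e. that convergence is decided blockwise. To prove it I would use the identity $\det(\lambda I-H_{FGS})=\det(\lambda(D_A-L_A)-U_A)/\det(D_A-L_A)$ already used in Lemma \ref{le4}. Setting $M(\lambda)=\lambda D_A-\lambda L_A-U_A$, the matrix $PM(\lambda)P^T$ inherits the block upper triangular structure of $PAP^T$, with diagonal blocks $\lambda D_{R_{ii}}-\lambda L_{R_{ii}}-U_{R_{ii}}$. Hence $\det M(\lambda)=\prod_i\det(\lambda D_{R_{ii}}-\lambda L_{R_{ii}}-U_{R_{ii}})$, and dividing by $\det(D_A-L_A)$, which equals the product of all the $a_{rr}$ and therefore $\prod_i\det(D_{R_{ii}}-L_{R_{ii}})$, factors the characteristic polynomial of $H_{FGS}$ as $\prod_i\det(\lambda I-H^{R_{ii}}_{FGS})$. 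The eigenvalues of $H_{FGS}$ are thus the union of those of the $H^{R_{ii}}_{FGS}$, which gives the claim.

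The point I expect to require the most care is exactly this factorization, because the forward Gauss--Seidel splitting is not invariant under $P$: a cross-block entry that lies below the diagonal in the original order of $A$ carries a factor $\lambda$ in $M(\lambda)$, whereas the same entry lies above the block diagonal in $PAP^T$ and carries none. The resolution is that all inter-block edges point one way (the $\alpha_i$ are the strongly connected components of $A$), so every cross-block entry lies strictly off the block diagonal of the triangular form and never contributes to the block-triangular determinant; only the within-block parts of $L_A$ and $U_A$ survive, and these are precisely $L_{R_{ii}}$ and $U_{R_{ii}}$.

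It remains to classify each block. If $R_{ii}\in H^{I}_n$, equivalently (Lemma \ref{l9}) if $R_{ii}$ has no generalized diagonally equipotent principal submatrix, then $\rho(H^{R_{ii}}_{FGS})<1$ by Theorem \ref{le2}. Otherwise $R_{ii}\in H^{M}_n$; since $R_{ii}$ is irreducible, Lemma \ref{0.1.7}, Lemma \ref{le0} and Remark \ref{remark1} force $R_{ii}$ itself to be generalized diagonally equipotent (a smaller equipotent principal submatrix would make $R_{ii}$ reducible). For such a block Theorem \ref{le3} gives $\rho=1$ when its order is $2$, while Theorem \ref{the6''} gives $\rho(H^{R_{ii}}_{FGS})<1$ precisely when $D^{-1}_{R_{ii}}R_{ii}\notin\mathscr{U}^{\psi}_k$ for order $k\geq 3$. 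Hence $\rho(H_{FGS})<1$ fails exactly when some block is a $2\times 2$ irreducible generalized diagonally equipotent submatrix, or a $k\times k$ $(k\geq 3)$ irreducible block with $D^{-1}_{R_{ii}}R_{ii}\in\mathscr{U}^{\psi}_k\cap GDE_k$. Finally, the same Remark \ref{remark1} argument shows that any irreducible generalized diagonally equipotent principal submatrix of $A$ must be one of these blocks (an irreducible $A(\beta)$ lies inside a single strongly connected component $\alpha_i$, and equipotence with $\beta\neq\alpha_i$ would contradict the irreducibility of $R_{ii}$), so the offending submatrices may be quantified over all irreducible principal submatrices as in the statement, completing the proof.
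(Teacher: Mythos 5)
Your proposal is correct and follows essentially the same route as the paper: reduce to the Frobenius normal form of Lemma \ref{0.1.8}, establish $\rho(H_{FGS})=\max_i\rho(H^{R_{ii}}_{FGS})$, and classify each irreducible diagonal block via Lemma \ref{l9}, Theorem \ref{le2}, Theorem \ref{le3} and Theorem \ref{the6''}. The only difference is that you actually justify the blockwise identity (the determinant factorization and the permutation subtlety) which the paper dismisses as ``direct computations,'' and you correctly observe via Remark \ref{remark1} that the offending irreducible equipotent submatrices must coincide with whole diagonal blocks.
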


\begin{proof} If $A\in H_n$ is irreducible, it follows from Theorem \ref{le3} and Theorem \ref{the6''}
that the conclusion of this theorem is true. If $A\in H_n$ is
reducible, since $A\in H_n$ with $a_{ii}\neq 0$ for all $i\in
\langle n\rangle$, Theorem \ref{0.1.8} shows that each diagonal
square block $R_{ii}$ in the Frobenius normal from (\ref{eq5a}) of
$A$ is irreducible and generalized diagonally dominant for
$i=1,2,\cdots,s$. Let $H^{R_{ii}}_{FGS}$ denote the Gauss-Seidel
iteration matrix associated with diagonal square block $R_{ii}$.
Direct computations give $$\rho(H_{FGS})=\max\limits_{1\leq i\leq
s}\rho(H^{R_{ii}}_{FGS}).$$ Since $R_{ii}$ is irreducible and
generalized diagonally dominant, Theorem \ref{lemma8'}, Theorem
\ref{le3}, Theorem \ref{the5}, Theorem \ref{the6} and Theorem
\ref{the6a1} show that $\rho(H_{FGS})=\max\limits_{1\leq i\leq
s}\rho(H^{R_{ii}}_{FGS})<1$, i.e., the sequence $\{x^{(i)}\}$
generated by FGS-scheme (\ref{r5}) converges to the unique solution
of {\rm(\ref{r1})} for any choice of the initial guess $x^{(0)}$ if
and only if $A$ has neither $2\times 2$ irreducibly generalized
diagonally equipotent principal submatrix nor irreducibly principal
submatrix $A_k=A(i_1,i_2,\cdots,i_k)$, $3\leq k\leq n$, such that
$D_{A_k}^{-1}A_k\notin {\mathscr{U}}^{\psi}_k\cap GDE_k$.
\end{proof}

\begin{theorem}\label{the6a2a} Let $A=(a_{ij})\in H_n$ with $a_{ii}\neq 0$ for all $i\in \langle n\rangle$. Then $\rho(H_{BGS})<1$, where $H_{BGS}$
is defined in (\ref{1j}), i.e., the sequence $\{x^{(i)}\}$ generated
by BGS-scheme (\ref{r5}) converges to the unique solution of
{\rm(\ref{r1})} for any choice of the initial guess $x^{(0)}$ if and
only if $A$ has neither $2\times 2$ irreducibly generalized
diagonally equipotent principal submatrix nor irreducibly principal
submatrix $A_k=A(i_1,i_2,\cdots,i_k)$, $3\leq k\leq n$, such that
$D_{A_k}^{-1}A_k\notin {\mathscr{L}}^{\phi}_k\cap GDE_k$.
\end{theorem}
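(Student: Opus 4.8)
The plan is to reduce the backward Gauss--Seidel statement to the forward case already settled in Theorem~\ref{the6a1a} by replacing every forward object with its backward dual and invoking the corresponding backward results. As there, I would split the argument according to whether $A$ is irreducible.

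First I would treat the irreducible case. If $A\in H_n$ is irreducible then, since $a_{ii}\neq 0$ for all $i$, $A$ is either an invertible $H$-matrix or an irreducible mixed $H$-matrix. In the invertible case Theorem~\ref{le2} gives $\rho(H_{BGS})<1$, while Lemma~\ref{l9} guarantees that no generalized diagonally equipotent principal submatrix is present, so the equivalence holds vacuously. In the mixed case I would apply Theorem~\ref{le3} when $n=2$ (which forces $\rho(H_{BGS})=1$, so convergence fails exactly on a $2\times2$ irreducibly generalized diagonally equipotent block) and Theorem~\ref{the6'''} when $n\geq 3$ (which gives $\rho(H_{BGS})<1$ if and only if $D_A^{-1}A\notin\mathscr{L}^\phi_n$). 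Together these yield the stated equivalence for irreducible $A$.

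Next I would handle the reducible case through the Frobenius normal form~(\ref{eq5a}). Because $a_{ii}\neq 0$, Lemma~\ref{0.1.8} ensures every diagonal block $R_{ii}$ is irreducible and generalized diagonally dominant. The central step is the spectral reduction
\[
\rho(H_{BGS})=\max_{1\leq i\leq s}\rho(H^{R_{ii}}_{BGS}),
\]
which I would obtain by observing that in the Frobenius form the strictly lower part $L_A$ is block diagonal, since its below-diagonal blocks are precisely the vanishing blocks $R_{ij}$ with $i>j$, whereas $D_A-U_A$ is block upper triangular; consequently $H_{BGS}=(D_A-U_A)^{-1}L_A$ is block upper triangular with diagonal blocks $(D_{R_{ii}}-U_{R_{ii}})^{-1}L_{R_{ii}}=H^{R_{ii}}_{BGS}$, so its spectrum is the union of the block spectra. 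I would then apply the irreducible-case equivalence together with Theorems~\ref{lemma8'}, \ref{le3}, \ref{the5'}, \ref{the6'} and~\ref{the6a2} to each block $R_{ii}$, concluding that $\rho(H_{BGS})<1$ precisely when no $R_{ii}$ contains a $2\times2$ irreducibly generalized diagonally equipotent principal submatrix and no irreducible principal submatrix $A_k$ with $D_{A_k}^{-1}A_k\in\mathscr{L}^\phi_k\cap GDE_k$ occurs anywhere in $A$.

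The step I expect to be the main obstacle is the rigorous justification of the spectral reduction for $H_{BGS}$, since here the inverted factor is the block upper triangular matrix $(D_A-U_A)^{-1}$ rather than the block lower triangular $(D_A-L_A)^{-1}$ used in the FGS proof. Once the block-diagonality of $L_A$ in the Frobenius form is recorded, the block upper triangular structure of $H_{BGS}$ and hence the eigenvalue formula follow, and the remainder is a direct transcription of the argument for Theorem~\ref{the6a1a}; I therefore anticipate no genuine new difficulty beyond carefully dualizing that computation.
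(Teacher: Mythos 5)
Your proposal is correct and follows essentially the same route as the paper: the paper's proof of this theorem simply transcribes the argument of Theorem~\ref{the6a1a} to the backward case, splitting into the irreducible case (handled by Theorem~\ref{le3} and Theorem~\ref{the6'''}) and the reducible case via the Frobenius normal form~(\ref{eq5a}) with $\rho(H_{BGS})=\max_i\rho(H^{R_{ii}}_{BGS})$, invoking Theorem~\ref{0.1.8} and Theorem~\ref{the6a2}. Your explicit verification that $L_A$ is block diagonal and $(D_A-U_A)^{-1}$ is block upper triangular in the Frobenius form is a welcome justification of the block spectral reduction that the paper only asserts as ``direct computations.''
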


\begin{proof}
Similar to the proof of Theorem \ref{the6a1a}, we can obtain the
proof immediately by Theorem \ref{0.1.8} and Theorem \ref{the6a2}.
\end{proof}

\begin{theorem}\label{the6a3a} Let $A=(a_{ij})\in H_n$ with $a_{ii}\neq 0$ for all $i\in \langle n\rangle$. Then $\rho(H_{SGS})<1$, where $H_{SGS}$
is defined in (\ref{1j}), i.e., the sequence $\{x^{(i)}\}$ generated
by SGS-scheme (\ref{r5}) converges to the unique solution of
{\rm(\ref{r1})} for any choice of the initial guess $x^{(0)}$ if and
only if $A$ has neither $2\times 2$ irreducibly generalized
diagonally equipotent principal submatrix nor irreducibly principal
submatrix $A_k=A(i_1,i_2,\cdots,i_k)$, $3\leq k\leq n$, such that
$D_{A_k}^{-1}A_k\notin {\mathscr{R}}^{0}_k\cap GDE_k$.
\end{theorem}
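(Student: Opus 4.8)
The plan is to mirror the reduction used in the proofs of Theorem \ref{the6a1a} and Theorem \ref{the6a2a}, splitting into the irreducible and reducible cases and then reading off the forbidden-submatrix criterion from Theorem \ref{the6a3}. First I would settle the irreducible case: if $A\in H_n$ with $a_{ii}\neq 0$ for all $i$ is irreducible, then Lemma \ref{0.1.7} gives $A\in GD_n$, and since no diagonal entry vanishes $A$ cannot be a singular $H$-matrix, so either $A\in H^I_n$ or, by Lemma \ref{le0}, $A$ is an irreducible mixed $H$-matrix (under irreducibility $H^M_n=GDE_n$). In the invertible case Theorem \ref{le2} gives $\rho(H_{SGS})<1$ with no forbidden submatrix present, while in the mixed case Theorem \ref{le3} (for the order-two blocks) and Theorem \ref{th3.24''} (for order at least three) supply the sharp dichotomy governed by whether the relevant normalized submatrix lies in $\mathscr{R}^0\cap GDE$.

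For the reducible case I would permute $A$ into its Frobenius normal form (\ref{eq5a}) by $P$, so that $PAP^T$ is block upper triangular with irreducible diagonal blocks $R_{ii}=A(\alpha_i)$; because $a_{ii}\neq 0$ for every $i$, Lemma \ref{0.1.8} forces each $R_{ii}$ to be an \emph{irreducible} generalized diagonally dominant matrix, with no $1\times 1$ zero block occurring. The structural heart of the argument is that the three iteration matrices inherit this block pattern: writing $A=D_A-L_A-U_A$, the strictly lower part $L_A$ becomes block diagonal while $U_A$ and $D_A-U_A$ are block upper triangular, whence $(D_A-L_A)^{-1}$ is block diagonal and $(D_A-U_A)^{-1}$ is block upper triangular; consequently $H_{FGS}=(D_A-L_A)^{-1}U_A$ and $H_{BGS}=(D_A-U_A)^{-1}L_A$, and hence $H_{SGS}=H_{BGS}H_{FGS}$, are block upper triangular with diagonal blocks exactly $H^{R_{ii}}_{FGS}$, $H^{R_{ii}}_{BGS}$ and $H^{R_{ii}}_{SGS}$. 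Since the spectrum of a block triangular matrix is the union of the spectra of its diagonal blocks, this yields
\[
\rho(H_{SGS})=\max_{1\le i\le s}\rho(H^{R_{ii}}_{SGS}),
\]
so that $\rho(H_{SGS})<1$ if and only if $\rho(H^{R_{ii}}_{SGS})<1$ for every $i$.

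It then remains to apply Theorem \ref{the6a3} to each block $R_{ii}\in GD_{k_i}$: that theorem characterizes $\rho(H^{R_{ii}}_{SGS})<1$ by the absence inside $R_{ii}$ of a $2\times 2$ irreducibly generalized diagonally equipotent principal submatrix and of any irreducible principal submatrix $A_k$ ($3\le k\le k_i$) realizing the singular configuration $D_{A_k}^{-1}A_k\in\mathscr{R}^0_k\cap GDE_k$ of Lemma \ref{le2'}; conjoining these block criteria over $i=1,\dots,s$ delivers the stated global condition. The step I expect to require the most care is the bookkeeping that equates the block-level and global forbidden submatrices. Here I would use that any irreducible principal submatrix of $A$ is supported on a strongly connected set of indices and therefore lies entirely within a single diagonal block $R_{ii}$, so that ``$A$ possesses a forbidden principal submatrix'' is equivalent to ``some $R_{ii}$ possesses one''; I would also verify that membership in $\mathscr{R}^0_k$ and in $GDE_k$, together with the normalization by $D_{A_k}^{-1}$, is invariant under the permutation $P$ and under restriction to a block, which is immediate because $P$ merely relabels indices and both notions are insensitive to simultaneous permutation and to positive diagonal scaling.
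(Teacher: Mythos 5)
Your proposal follows essentially the same route as the paper: the paper's proof is a one-line reduction to the argument of Theorem \ref{the6a1a}, i.e.\ split into the irreducible case (handled by Theorem \ref{le3} and Theorem \ref{th3.24''}) and the reducible case, where the Frobenius normal form of Theorem \ref{0.1.8} gives $\rho(H_{SGS})=\max_{1\le i\le s}\rho(H^{R_{ii}}_{SGS})$ and Theorem \ref{the6a3} is applied blockwise. Your write-up is correct and in fact supplies the block-triangularity justification of that spectral-radius identity, which the paper leaves as ``direct computations.''
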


\begin{proof}
Similar to the proof of Theorem \ref{the6a1a}, we can obtain the
proof immediately by Theorem \ref{0.1.8} and Theorem \ref{the6a3}.
\end{proof}

\begin{theorem}\label{th2} Let $A\in H_n$ be nonsingular. Then $\rho(H_{SGS})<1$, where $H_{SGS}$ is defined in
(\ref{1jj}), i.e., the sequence $\{x^{(i)}\}$ generated by
SGS-scheme (\ref{r5}) converges to the unique solution of
{\rm(\ref{r1})} for any choice of the initial guess $x^{(0)}$ if and
only if $A$ has no $2\times 2$ irreducibly generalized diagonally
equipotent principal submatrices.
\end{theorem}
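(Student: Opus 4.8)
The plan is to reduce the claim to the criterion already proved in Theorem~\ref{the6a3a} and then to show that, under the nonsingularity hypothesis, the clause concerning principal submatrices of order at least three becomes vacuous, so that only the $2\times2$ condition survives. First I would record that a nonsingular $A\in H_n$ cannot belong to the singular class $H^S_n$ (whose members are all singular), whence $A\in H^I_n\cup H^M_n$ and therefore $a_{ii}\neq0$ for every $i\in\langle n\rangle$; in particular $H_{SGS}$ in (\ref{1jj}) is well defined and Theorem~\ref{the6a3a} applies. By that theorem, $\rho(H_{SGS})<1$ holds if and only if $A$ possesses neither a $2\times2$ irreducibly generalized diagonally equipotent principal submatrix nor an irreducible principal submatrix $A_k=A(i_1,i_2,\cdots,i_k)$ with $3\leq k\leq n$ and $D_{A_k}^{-1}A_k\in\mathscr{R}^0_k\cap GDE_k$. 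Thus it remains only to prove that nonsingularity of $A$ excludes the second type of submatrix.

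The heart of the argument is the following implication, which mirrors the reduction carried out in the proof of Theorem~\ref{the7''} for the class $GD_n$: if $A$ were to contain an irreducible principal submatrix $A_k=A(\alpha)$ with $k\geq3$ and $D_{A_k}^{-1}A_k\in\mathscr{R}^0_k\cap GDE_k$, then $A$ would be singular. To see this I would first invoke Lemma~\ref{le2'}: since $A_k\in GD_k$ is irreducible and $D_{A_k}^{-1}A_k\in GDE_k\cap\mathscr{R}^0_k$, the submatrix $A_k$ is singular. Next, because $A(\alpha)$ is irreducible, the index set $\alpha$ is strongly connected in the directed graph of $A$ and hence is contained in a single strongly connected component, i.e.\ in one diagonal block $R_{ii}=A(\alpha_i)$ of the Frobenius normal form (\ref{eq5a}) of $A$; by Lemma~\ref{0.1.8} this block is irreducible and generalized diagonally dominant. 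Then $A(\alpha)$ is a generalized diagonally equipotent principal submatrix of $R_{ii}$, which makes $R_{ii}$ nonstrictly diagonally dominant, so Remark~\ref{remark1} forces $R_{ii}(\alpha,\alpha_i\setminus\alpha)=0$; the irreducibility of $R_{ii}$ now compels $\alpha=\alpha_i$, that is, $R_{ii}=A_k$ is singular. Since $A$ is permutation similar to the block-triangular form (\ref{eq5a}), whose determinant is $\prod_j\det R_{jj}$, the singularity of $R_{ii}$ makes $A$ singular, contradicting the hypothesis.

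Having ruled out the submatrices of order at least three, the criterion of Theorem~\ref{the6a3a} collapses exactly to the statement that $A$ has no $2\times2$ irreducibly generalized diagonally equipotent principal submatrix, which is the desired conclusion. An equally natural route would bypass Theorem~\ref{the6a3a}: treat irreducible $A$ through Lemma~\ref{0.1.7} (which places it in $GD_n$) and Theorem~\ref{the7''} directly, and treat reducible $A$ by passing to the Frobenius form (\ref{eq5a}), verifying by direct computation that $H_{SGS}$ is block upper triangular with diagonal blocks $H^{R_{ii}}_{SGS}$ so that $\rho(H_{SGS})=\max_i\rho(H^{R_{ii}}_{SGS})$, and then applying Theorem~\ref{the7''} to each irreducible nonsingular block $R_{ii}$. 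Here one also uses that every $2\times2$ irreducible generalized diagonally equipotent principal submatrix of $A$ lies inside a single block $R_{ii}$, so that the surviving $2\times2$ conditions for $A$ and for the blocks agree.

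I expect the main obstacle to be the propagation step: showing that a single offending submatrix of order at least three forces global singularity. The delicate point is that nonsingularity of $A$ does \emph{not} in general preclude singular principal submatrices; what saves the argument is that an \emph{irreducible} submatrix carrying an $\mathscr{R}^0_k\cap GDE_k$ pattern is forced, via Remark~\ref{remark1}, to coincide with a full irreducible diagonal block $R_{ii}$ of the Frobenius form, and such a singular block genuinely pulls down $\det A$. The other point requiring care, in either route, is the graph-theoretic observation that an irreducible principal submatrix is always confined to one strongly connected component, which is what lets the block structure of $A$ control both the order-$\geq3$ and the order-$2$ conditions.
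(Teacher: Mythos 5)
Your argument is correct and follows essentially the same route as the paper: the paper's proof likewise reduces to Theorem~\ref{the6a3a} and then kills the order-$\geq 3$ clause using nonsingularity, exactly as in Theorem~\ref{the7''} (which cites Theorem 3.11 of \cite{cyzhang01} for that exclusion). The only difference is that you prove the exclusion step yourself --- via Lemma~\ref{le2'}, confinement of an irreducible principal submatrix to a single Frobenius block, Remark~\ref{remark1}, and the block-triangular determinant --- instead of invoking the external reference, which makes the argument self-contained but does not change the approach.
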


\begin{proof} The proof is similar to the proof of Theorem
\ref{the7''}.
\end{proof}

The research in this section shows that the FGS iterative method
associated with the irreducible matrix $A\in H_n^M\cap
\mathscr{U}^{\theta}_n$ fails to converge, the same does for the BGS
iterative method associated with the irreducible matrix $A\in
H_n^M\cap \mathscr{L}^{\theta}_n$ and the SGS iterative method
associated with the irreducible matrix $A\in H_n^M\cap
\mathscr{R}^{0}_n$. It is natural to consider convergence on
preconditioned Gauss-Seidel iterative methods for nonsingular
general $H-$matrices.

\section{Convergence on preconditioned Gauss-Seidel iterative
methods}\label{further-sec} In this section, Gauss-type
preconditioning techniques for linear systems with nonsingular
general $H-$matrices are chosen such that the coefficient matrices
are invertible $H-$matrices. Then based on structure heredity of the
Schur complements for general $H-$matrices in \cite{cyzhang01},
convergence on preconditioned Gauss-Seidel iterative methods will be
studied and some results will be established.

Many researchers have considered the left Gauss-type preconditioner
applied to linear system (\ref{r1}) such that the associated Jacobi
and Gauss-Seidel methods converge faster than the original ones.
Milaszewicz \cite{milaszewicz1987} considered the preconditioner
\begin{equation}\label{precondition1} {P}_1=\left[
\begin{array}{cccc}
 \ 1 & \ 0 & \cdots & 0 \\
 \ -a_{21}  & \ 1 & \cdots & 0 \\
 \ \vdots & \vdots & \ddots & \vdots \\
 \ -a_{n1}  & \ 0 & \cdots & 1
 \end{array}
 \right].
 \end{equation}
 Later, Hadjidimos et al \cite{hadjidimos2003} generalized Milaszewicz's preconditioning
 technique and presented the preconditioner
\begin{equation}\label{precondition2} {P}_1(\alpha)=\left[
\begin{array}{cccc}
 \ 1 & \ 0 & \cdots & 0 \\
 \ -\alpha_2a_{21}  & \ 1 & \cdots & 0 \\
 \ \vdots & \vdots & \ddots & \vdots \\
 \ -\alpha_na_{n1}  & \ 0 & \cdots & 1
 \end{array}
 \right].
 \end{equation}
Recently, Zhang et al. \cite{zhang2005} proposed the left Gauss type
preconditioning techniques which utilizes the Gauss transformation
\cite{G.H} matrices as the base of the Gauss typ precondtioner based
on Hadjidimos et al. \cite{hadjidimos2003}, Milaszewicz
\cite{milaszewicz1987} and $LU$ factorization method \cite{G.H}. The
construction of Gauss transformation matrices is as follows:
\begin{equation}\label{precondition3} M_k=\left[
\begin{array}{cccccc}
 \ 1 & \cdots & 0 & 0 &\cdots & 0 \\
 \ \vdots &  &  \vdots &  \vdots &  &\vdots\\
 \ 0 & \cdots & 1 & 0 &\cdots & 0 \\
 \ 0 & \cdots & -\tau_{k+1} & 1 & \cdots & 0\\
 \ \vdots & &\vdots & \vdots & & \vdots \\
 \ 0  & \cdots & -\tau_{n} & 0 &\cdots & 1
 \end{array}
 \right],
 \end{equation}
 where $\tau_{i}=a_{ik}/a_{kk},\ i=k+1,\cdots,n$ and
 $k=1,2,\cdots,n-1.$ Zhang  et al. \cite{zhang2005} consider the
 following left preconditioners:
\begin{equation}\label{precondition4}
\mathscr{P}_1=M_1,\ \mathscr{P}_2=M_2M_1,\ \cdots,\
\mathscr{P}_{n-1}=M_{n-1}M_{n-2}\cdots M_2M_1.
 \end{equation}

Let $\mathscr{H}_n=\{A\in H_n:\ A~is~nonsingular\}$. Then
$H^I_n\subset\mathscr{H}_n$ while $H^I_n\neq\mathscr{H}_n$. Again,
let $\hat{H}^M_n=\{A\in H^M_n:\ A~is~nonsingular\}$. In fact,
$\mathscr{H}_n=H^I_n\cup\hat{H}^M_n.$ Thus, nonsingular general
$H-$matrices that the matrices in $\mathscr{H}_n$ differ from
invertible $H-$matrices. In this section we will propose some
Gauss-type preconditioning techniques for linear systems with the
coefficient matrices belong to $\mathscr{H}_n$ and establish some
convergence results on preconditioned Gauss-Seidel iterative
methods.

Firstly, we consider the case that the coefficient matrix
$A\in\mathscr{H}_n$ is irreducible. Then let us generalize the
preconditioner of (\ref{precondition1}),(\ref{precondition2}) and
(\ref{precondition3}) as follows:
\begin{equation}\label{precondition5} \mathscr{P}_k=\left[
\begin{array}{ccccccc}
 \ 1 & \cdots & 0& -\tau_{1} & 0 &\cdots & 0 \\
 \ \vdots &  & \vdots & \vdots &  \vdots &  &\vdots\\
 \ 0 & \cdots & 1 & -\tau_{k-1}& 0 & \cdots & 0\\
 \ 0 & \cdots & 0 & 1 & 0 &\cdots & 0 \\
 \ 0 & \cdots & 0 & -\tau_{k+1} & 1 & \cdots & 0\\
 \ \vdots & &\vdots& \vdots & \vdots & & \vdots \\
 \ 0  & \cdots & 0 & -\tau_{n} & 0 &\cdots & 1
 \end{array}
 \right],
 \end{equation}
 where $\tau_{i}=a_{ik}/a_{kk},\ i=1,\cdots,n$; $i\neq k$ and
 $k\in \langle n\rangle.$ Assume that $\tilde{A}_k=\mathscr{P}_kA$
 for $k\in \langle n\rangle$, ${H}^A_J,~{H}^A_{FGS},~{H}^A_{BGS}$ and
 ${H}^A_{SGS}$ denote the Jacobi and the forward, backward and symmetric Gauss-Seidel (FGS-, BGS- and
SGS-) iteration matrices associated with the coefficient matrix
${A}$, respectively.

\begin{theorem}\label{th5z} Let $A\in \mathscr{H}_n$ be irreducible.
Then $\tilde{A}_k=\mathscr{P}_kA\in H^I_n$ for all $k\in \langle
n\rangle$, where $\mathscr{P}_k$ is defined in
(\ref{precondition5}). Furthermore, the following conclusions hold:
\begin{enumerate}
\item $\rho({H}^{\tilde{A}_k}_J)\leq\rho({H}^{\mu(A/k)}_J)<1$ for all $k\in
\langle n\rangle$, where $A/k=A/\alpha$ with $\alpha=\{k\}$;
\item $\rho({H}^{\tilde{A}_k}_{FGS})\leq\rho({H}^{\mu(A/k)}_{FGS})<1$ for all $k\in
\langle n\rangle$;
\item $\rho({H}^{\tilde{A}_k}_{BGS})\leq\rho({H}^{\mu(A/k)}_{BGS})<1$ for all $k\in
\langle n\rangle$;
\item $\rho({H}^{\tilde{A}_k}_{SGS})\leq\rho({H}^{\mu(A/k)}_{SGS})<1$ for all $k\in
\langle n\rangle$,
\end{enumerate}
i.e., the sequence $\{x^{(i)}\}$ generated by the preconditioned
Jacobi, FGS, BGS and SGS iterative schemes (\ref{r5}) converge to
the unique solution of {\rm(\ref{r1})} for any choice of the initial
guess $x^{(0)}$.
\end{theorem}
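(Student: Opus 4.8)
The plan is to exploit the fact that the Gauss-type preconditioner $\mathscr{P}_k$ of (\ref{precondition5}) is exactly the elementary transformation annihilating the off-diagonal part of the $k$-th column of $A$. First I would record the structure of $\tilde A_k=\mathscr{P}_kA$. Since $A\in\mathscr{H}_n$ is irreducible, Lemma \ref{0.1.7} gives $A\in GD_n$, and generalized diagonal dominance together with irreducibility forces $a_{kk}\neq0$ (a zero diagonal entry would make row $k$ vanish, contradicting irreducibility), so $\mathscr{P}_k$ and the Schur complement $A/k$ are well defined. Writing $\tau_i=a_{ik}/a_{kk}$, row $i\neq k$ of $\tilde A_k$ is row $i$ of $A$ minus $\tau_i$ times row $k$, while row $k$ is unchanged; hence $(\tilde A_k)_{ik}=a_{ik}-\tau_i a_{kk}=0$ for every $i\neq k$, so the $k$-th column of $\tilde A_k$ is $a_{kk}e_k$, and for $i,j\neq k$ one has $(\tilde A_k)_{ij}=a_{ij}-a_{ik}a_{kj}/a_{kk}=(A/k)_{ij}$ by (\ref{2.1}). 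Thus, writing $\alpha'=\langle n\rangle\setminus\{k\}$ and sending $k$ to the last position by a symmetric permutation, $\tilde A_k$ becomes block lower triangular with diagonal blocks $(a_{kk})$ and $A/k$.

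Next I would prove the first assertion $\tilde A_k\in H^I_n$. Applying $\mu(\cdot)$ entrywise preserves the cleared column, so $\mu(\tilde A_k)$ is, in the same ordering, a block triangular $Z$-matrix with diagonal blocks $(|a_{kk}|)$ and $\mu(A/k)$. Since a block triangular $Z$-matrix is a nonsingular $M$-matrix exactly when its diagonal blocks are, and $|a_{kk}|>0$ trivially is one, the whole claim reduces to $\mu(A/k)\in M^{\bullet}_{n-1}$, i.e. $A/k\in H^I_{n-1}$. This is where I would invoke the structure heredity of Schur complements of general $H$-matrices from \cite{cyzhang01}. This step is the main obstacle: $A$ may be a nonsingular mixed $H$-matrix, equivalently (by Lemma \ref{le0} and Lemma \ref{0.1.7}) an irreducible generalized diagonally equipotent matrix, so $\mu(A)$ is only a singular $M$-matrix and one cannot simply quote the classical ``Schur complement of an invertible $H$-matrix is invertible''. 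The heredity result uses the nonsingularity of $A$ decisively: if $A/k$ had an irreducible generalized diagonally equipotent (hence, by Lemma \ref{le2'}, singular) principal submatrix on an index set $\beta$, then the quotient identity $(A/k)(\beta)=A(\beta\cup\{k\})/\{k\}$ together with Lemma \ref{le3.2} would force a principal submatrix of $A$ to be singular in a way the nonsingularity of $A$ excludes. I would cite the corresponding theorem of \cite{cyzhang01} rather than redo this, after which $\tilde A_k\in H^I_n$ follows.

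Then I would handle the four spectral-radius estimates uniformly via a decoupling observation. For any matrix $C$ whose $k$-th column is $c_{kk}e_k$, one has $L_C e_k=U_C e_k=0$, so the $k$-th column of each of $H^{C}_J$, $H^{C}_{FGS}$, $H^{C}_{BGS}$ vanishes; moreover, because $c_{ik}=0$ for $i\neq k$, the Jacobi and Gauss--Seidel sweeps restricted to the components in $\alpha'$ never reference the $k$-th component, so the $(\alpha',\alpha')$ block of each iteration matrix of $C$ is precisely the corresponding iteration matrix of $C(\alpha')$ and the remaining eigenvalue is $0$. Taking $C=\tilde A_k$, so $C(\alpha')=A/k$, yields $\sigma(H^{\tilde A_k})=\sigma(H^{A/k})\cup\{0\}$ and hence $\rho(H^{\tilde A_k})=\rho(H^{A/k})$ for $H\in\{H_J,H_{FGS},H_{BGS}\}$; since $H^{\tilde A_k}_{SGS}=H^{\tilde A_k}_{BGS}H^{\tilde A_k}_{FGS}$ inherits the same block-triangular form, the product of the two $(\alpha',\alpha')$ blocks is $H^{A/k}_{SGS}$, giving $\rho(H^{\tilde A_k}_{SGS})=\rho(H^{A/k}_{SGS})$ as well.

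Finally, I would apply the classical $H$-matrix comparison to $A/k$. From the expansion $(D-L)^{-1}=\sum_{m\ge0}(D^{-1}L)^mD^{-1}$ one gets $|(D-L)^{-1}|\le(|D|-|L|)^{-1}$ entrywise, so $|H^{A/k}_{FGS}|\le H^{\mu(A/k)}_{FGS}$, and likewise for Jacobi, for BGS, and (by submultiplicativity of $|\cdot|$) $|H^{A/k}_{SGS}|\le H^{\mu(A/k)}_{SGS}$; monotonicity of the spectral radius on nonnegative matrices then gives $\rho(H^{A/k})\le\rho(H^{\mu(A/k)})$ in all four cases. Because $\mu(A/k)\in M^{\bullet}_{n-1}$, the convergence of Jacobi and Gauss--Seidel for nonsingular $M$-matrices yields $\rho(H^{\mu(A/k)})<1$ for each of the four methods. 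Chaining the three facts gives, for every method, $\rho(H^{\tilde A_k})=\rho(H^{A/k})\le\rho(H^{\mu(A/k)})<1$, which is exactly items (1)--(4); the stated convergence of the preconditioned schemes is then immediate from $\rho(H)<1$.
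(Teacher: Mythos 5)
Your proof is correct and follows essentially the same route as the paper: both reduce to the Schur-complement heredity result of \cite{cyzhang01} to conclude $A/k\in H^I_{n-1}$, observe that $\mathscr{P}_kA$ is (up to a symmetric permutation) block triangular with diagonal blocks $(a_{kk})$ and $A/k$, hence an invertible $H$-matrix, and then bound the spectral radii by those of the iteration matrices of $\mu(A/k)$. The only difference is one of detail: you prove the spectral decoupling $\rho(H^{\tilde A_k})=\rho(H^{A/k})$ and the entrywise comparison $|H^{A/k}|\le H^{\mu(A/k)}$ explicitly, where the paper simply cites Theorem 4.1 of \cite{cyzhang01} together with Theorem \ref{le2}.
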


\begin{proof} Since $A\in {H}^M_n$ is irreducible and nonsingular for $A\in \mathscr{H}_n$ is
irreducible, it follows from Theorem 5.9 in \cite{cyzhang01} that
$A/\alpha$ is an invertible $H-$matrix, where $\alpha=\{k\}$. For
the preconditioner $\mathscr{P}_k$, there exists a permutation
matrix $P_k$ such that
 $P_k\mathscr{P}_kP_k^T=\left[
\begin{array}{cc}
 \ 1 & 0 \\
 \ -\tau & I_{n-1}
 \end{array}
 \right]$, where $\tau=(\tau_1,\cdots,\tau_{k-1},\tau_{k+1},\cdots,\tau_n)^T$. As a consequence,
$$P_k(\mathscr{P}_kA)P_k^T=P_k\mathscr{P}_kP_k^TP_kAP_k^T=\left[
\begin{array}{cc}
 \ a_{kk} & \alpha_k \\
 \ 0 & A/\alpha
 \end{array}
 \right]$$ is an invertible $H-$matrix, so is $\mathscr{P}_kA$.
 Following, Theorem 4.1 in \cite{cyzhang01} and Theorem \ref{le2}
 show that the four conclusions hold. This completes the proof.
\end{proof}

On the other hand, if an irreducible matrix $A\in\mathscr{H}_n$ has
a principal submaitrix $A(\alpha)$ which is easy to get its inverse
matrix or is a (block)triangular matrix, there exists a permutation
matrix $P_{\alpha}$ such that

\begin{equation}\label{r1a}P_{\alpha}AP_{\alpha}^T=\left[
 \begin{array}{ccccc}
 \ A(\alpha) & \ A(\alpha,\alpha')  \\
 \ A(\alpha',\alpha) & \ A(\alpha',\alpha')
 \end{array}
 \right],
\end{equation}
where $\alpha'=\langle n\rangle-\alpha$. Let
\begin{equation}\label{r4a}
 M=\left[
 \begin{array}{ccccc}
 \ I_{|\alpha|} & 0)  \\
 \ -[A(\alpha)]^{-1}A(\alpha',\alpha) & \ I
 \end{array}
 \right].
 \end{equation} Then \begin{equation}\label{r2a}MP_{\alpha}AP_{\alpha}^T=\left[
 \begin{array}{ccccc}
 \ A(\alpha) & \ A(\alpha,\alpha')  \\
 \ 0 & \ A/\alpha
 \end{array}
 \right],
\end{equation}
where $A(\alpha)$ and $A/\alpha$ are both invertible $H-$matrices,
so is $MP_{\alpha}AP_{\alpha}^T$. As a result,
$P_{\alpha}^TMP_{\alpha}A=P^T(MP_{\alpha}AP_{\alpha}^T)P$ is an
invertible $H-$matrix. Therefore, we consider the following
preconditioner
\begin{equation}\label{r5a}\mathscr{P}_{\alpha}=P_{\alpha}^TMP_{\alpha},
\end{equation}
where $P_{\alpha}$ and $M$ are defined by (\ref{r1a}) and
(\ref{r1a}), respectively.

\begin{theorem}\label{th6z} Let $A\in \mathscr{H}_n$ be irreducible.
Then $\tilde{A}_{\alpha}=\mathscr{P}_{\alpha}A\in H^I_n$ for all
$\alpha\subset\langle n\rangle,~\alpha\neq\emptyset$, where
$\mathscr{P}_{\alpha}$ is defined in (\ref{r5a}). Furthermore, the
following conclusions hold:
\begin{enumerate}
\item $\rho({H}^{\tilde{A}_{\alpha}}_J)\leq\max\{\rho({H}^{\mu(A(\alpha))}_J),\rho({H}^{\mu(A/{\alpha})}_J)\}<1$ for all ${\alpha}\in
\langle n\rangle$;
\item $\rho({H}^{\tilde{A}_{\alpha}}_{FGS})\leq\max\{\rho({H}^{\mu(A(\alpha))}_{FGS}),\rho({H}^{\mu(A/{\alpha})}_{FGS})\}<1$ for all ${\alpha}\in
\langle n\rangle$;
\item $\rho({H}^{\tilde{A}_{\alpha}}_{BGS})\leq\max\{\rho({H}^{\mu(A(\alpha))}_{BGS}),\rho({H}^{\mu(A/{\alpha})}_{BGS})\}<1$ for all ${\alpha}\in
\langle n\rangle$;
\item $\rho({H}^{\tilde{A}_{\alpha}}_{SGS})\leq\max\{\rho({H}^{\mu(A(\alpha))}_{SGS}),\rho({H}^{\mu(A/{\alpha})}_{SGS})\}<1$ for all ${\alpha}\in
\langle n\rangle$,
\end{enumerate}
i.e., the sequence $\{x^{(i)}\}$ generated by the preconditioned
Jacobi, FGS, BGS and SGS iterative schemes (\ref{r5}) converge to
the unique solution of {\rm(\ref{r1})} for any choice of the initial
guess $x^{(0)}$.
\end{theorem}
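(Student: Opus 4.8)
The plan is to follow the template of the proof of Theorem~\ref{th5z}: reduce the whole claim to the block upper triangular matrix $B:=MP_{\alpha}AP_{\alpha}^{T}$ of (\ref{r2a}), whose two diagonal blocks are exactly $A(\alpha)$ and $A/\alpha$, and then read off both the $H^{I}_{n}$ membership and the four spectral bounds from this block structure. Since $A\in\mathscr{H}_{n}$ is irreducible, it is a nonsingular irreducible general $H$-matrix, so $A\in H^{I}_{n}\cup\hat{H}^{M}_{n}$ and, by Lemma~\ref{0.1.7}, $A\in GD_{n}$.

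First I would show $\tilde{A}_{\alpha}=\mathscr{P}_{\alpha}A\in H^{I}_{n}$. The Schur complement $A/\alpha$ is an invertible $H$-matrix by the structure-heredity result (Theorem 5.9 in \cite{cyzhang01}), exactly as in Theorem~\ref{th5z}. For the leading block, a positive diagonal scaling $E$ witnessing $A\in GD_{n}$ restricts to $E(\alpha)$ and shows $A(\alpha)\in GD_{|\alpha|}$; moreover, since $\alpha\subsetneq\langle n\rangle$ is a proper index set, Remark~\ref{remark1} together with the irreducibility of $A$ forbids any generalized diagonally equipotent principal submatrix inside $\alpha$ (such a submatrix would force a vanishing off-diagonal block of $A$), so Lemma~\ref{l9} gives $A(\alpha)\in H^{I}_{|\alpha|}$. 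Consequently $\mu(B)$ is a block upper triangular $Z$-matrix carrying the two nonsingular $M$-matrices $\mu(A(\alpha))$ and $\mu(A/\alpha)$ on its diagonal, hence is itself a nonsingular $M$-matrix; thus $B\in H^{I}_{n}$, and since $\tilde{A}_{\alpha}=P_{\alpha}^{T}BP_{\alpha}$ while $\mu$ commutes with permutation similarity, $\tilde{A}_{\alpha}\in H^{I}_{n}$ as well.

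Next I would derive the four bounds. The key observation is that, because the $(\alpha',\alpha)$ block of $B$ vanishes, the lower-triangular factor $D_{B}-L_{B}$ is block diagonal, so $H^{B}_{FGS}=(D_{B}-L_{B})^{-1}U_{B}$ is block upper triangular with diagonal blocks $H^{A(\alpha)}_{FGS}$ and $H^{A/\alpha}_{FGS}$; the same computation applies to $H_{J}$ and $H_{BGS}$, and hence to $H_{SGS}=H_{BGS}H_{FGS}$. Therefore $\rho(H^{B}_{\bullet})=\max\{\rho(H^{A(\alpha)}_{\bullet}),\rho(H^{A/\alpha}_{\bullet})\}$ for each of the four methods. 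Each diagonal block is an invertible $H$-matrix, so the comparison estimate (Theorem 4.1 in \cite{cyzhang01}) yields $\rho(H^{A(\alpha)}_{\bullet})\le\rho(H^{\mu(A(\alpha))}_{\bullet})$ and $\rho(H^{A/\alpha}_{\bullet})\le\rho(H^{\mu(A/\alpha)}_{\bullet})$, while $\mu(A(\alpha))$ and $\mu(A/\alpha)$ are nonsingular $M$-matrices, so Theorem~\ref{le2} makes both of these spectral radii strictly less than $1$. Combining the displayed identity with the two comparison inequalities then gives the stated bounds, each strictly below $1$.

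The main obstacle is transferring the block-triangular spectral-radius decomposition from $B$ back to $\tilde{A}_{\alpha}$, because the forward/backward Gauss--Seidel splitting into strictly lower and strictly upper parts does not commute with the permutation $P_{\alpha}$ built into $\mathscr{P}_{\alpha}$. For the Jacobi method this is harmless, since $H_{J}=I-D_{A}^{-1}A$ is permutation-equivariant and hence $\rho(H^{\tilde{A}_{\alpha}}_{J})=\rho(H^{B}_{J})$ immediately. For FGS, BGS and SGS I would route the estimate through the comparison matrix: the comparison bound gives $\rho(H^{\tilde{A}_{\alpha}}_{\bullet})\le\rho(H^{\mu(\tilde{A}_{\alpha})}_{\bullet})$ with $\mu(\tilde{A}_{\alpha})$ a nonsingular $M$-matrix whose permuted form $\mu(B)=P_{\alpha}\mu(\tilde{A}_{\alpha})P_{\alpha}^{T}$ is block upper triangular; the nonnegativity of the Gauss--Seidel iteration matrix of an $M$-matrix, combined with the block-triangular structure of $\mu(B)$, is then what lets one bound $\rho(H^{\mu(\tilde{A}_{\alpha})}_{\bullet})$ by $\max\{\rho(H^{\mu(A(\alpha))}_{\bullet}),\rho(H^{\mu(A/\alpha)}_{\bullet})\}$. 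Making this final reordering step fully rigorous for the comparison $M$-matrix is the delicate part of the argument; everything else follows the pattern already established for Theorem~\ref{th5z}.
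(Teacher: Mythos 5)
Your proposal is correct and follows essentially the same route as the paper, whose entire proof of this theorem is the single sentence ``The proof is similar to the proof of Theorem \ref{th5z}'': Schur-complement heredity to get $A(\alpha),A/\alpha\in H^{I}$, the block upper triangular permuted form (\ref{r2a}) to conclude $\tilde{A}_{\alpha}\in H^{I}_{n}$, and then Theorem 4.1 of \cite{cyzhang01} together with Theorem \ref{le2} for the four spectral bounds. The reordering subtlety you flag at the end --- that the Gauss--Seidel splitting does not commute with the permutation $P_{\alpha}$ hidden inside $\mathscr{P}_{\alpha}$ --- is genuine, but the paper leaves it equally unaddressed in both Theorem \ref{th5z} and here, so you have if anything supplied more justification than the original.
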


\begin{proof}
The proof is similar to the proof of Theorem \ref{th5z}.
\end{proof}

Following, we consider the case that the coefficient matrix
$A\in\mathscr{H}_n$ is reducible. If there exists a proper
$\alpha=\langle n\rangle-\alpha'\subset\langle n\rangle$ such that
$A(\alpha)$ and $A(\alpha')$ are both invertible $H-$matrices, we
consider the preconditioner (\ref{r5a}) and have the following
conclusion.

\begin{theorem}\label{th7z} Let $A\in \mathscr{H}_n$ and a proper
$\alpha=\langle n\rangle-\alpha'\subset\langle
n\rangle,~\alpha\neq\emptyset,$ such that $A(\alpha)$ and
$A(\alpha')$ are both invertible $H-$matrices. Then
$\tilde{A}_{\alpha}=\mathscr{P}_{\alpha}A\in H^I_n$, where
$\mathscr{P}_{\alpha}$ is defined in (\ref{r5a}). Furthermore, the
following conclusions hold:
\begin{enumerate}
\item $\rho({H}^{\tilde{A}_{\alpha}}_J)\leq\max\{\rho({H}^{\mu(A(\alpha))}_J),\rho({H}^{\mu(A/{\alpha})}_J)\}<1$ for all ${\alpha}\in
\langle n\rangle$;
\item $\rho({H}^{\tilde{A}_{\alpha}}_{FGS})\leq\max\{\rho({H}^{\mu(A(\alpha))}_{FGS}),\rho({H}^{\mu(A/{\alpha})}_{FGS})\}<1$ for all ${\alpha}\in
\langle n\rangle$;
\item $\rho({H}^{\tilde{A}_{\alpha}}_{BGS})\leq\max\{\rho({H}^{\mu(A(\alpha))}_{BGS}),\rho({H}^{\mu(A/{\alpha})}_{BGS})\}<1$ for all ${\alpha}\in
\langle n\rangle$;
\item $\rho({H}^{\tilde{A}_{\alpha}}_{SGS})\leq\max\{\rho({H}^{\mu(A(\alpha))}_{SGS}),\rho({H}^{\mu(A/{\alpha})}_{SGS})\}<1$ for all ${\alpha}\in
\langle n\rangle$,
\end{enumerate}
i.e., the sequence $\{x^{(i)}\}$ generated by the preconditioned
Jacobi, FGS, BGS and SGS iterative schemes (\ref{r5}) converge to
the unique solution of {\rm(\ref{r1})} for any choice of the initial
guess $x^{(0)}$.
\end{theorem}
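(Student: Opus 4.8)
The plan is to mirror the proof of Theorem \ref{th6z} almost verbatim, since the essential structure is identical. The only difference between Theorem \ref{th7z} and Theorem \ref{th6z} is that here $A$ is reducible and we are handed the hypothesis that both $A(\alpha)$ and $A(\alpha')$ are invertible $H-$matrices directly, rather than deducing invertibility of the Schur complement from irreducibility via Theorem 5.9 in \cite{cyzhang01}. First I would observe that since $A(\alpha)$ is an invertible $H-$matrix it is nonsingular, so the Schur complement $A/\alpha$ is well-defined, and the matrix $M$ in (\ref{r4a}) makes sense. I would then write down, exactly as in (\ref{r2a}), the block-triangular form
\begin{equation}\label{r2a-repeat}
MP_{\alpha}AP_{\alpha}^T=\left[
 \begin{array}{cc}
 \ A(\alpha) & \ A(\alpha,\alpha')  \\
 \ 0 & \ A/\alpha
 \end{array}
 \right].
\end{equation}

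The key step is to show that $A/\alpha$ is itself an invertible $H-$matrix. Here is where the reducible case diverges: rather than invoking irreducibility, I would use the hypothesis that $A(\alpha')$ is an invertible $H-$matrix together with a structure-heredity result for Schur complements of general $H-$matrices from \cite{cyzhang01}. Concretely, since $A\in\mathscr{H}_n$ and both diagonal blocks $A(\alpha)$ and $A(\alpha')$ are invertible $H-$matrices, the Schur complement $A/\alpha$ inherits the invertible $H-$matrix property. Once $A/\alpha\in H^I_n$ is established, the block-triangular matrix in (\ref{r2a-repeat}) is an invertible $H-$matrix because its comparison matrix dominates a block-triangular $M-$matrix whose diagonal blocks $\mu(A(\alpha))$ and $\mu(A/\alpha)$ are both nonsingular $M-$matrices. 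Consequently $\mathscr{P}_\alpha A=P_{\alpha}^T(MP_{\alpha}AP_{\alpha}^T)P_{\alpha}$ is an invertible $H-$matrix, giving $\tilde A_\alpha\in H^I_n$.

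Having placed $\tilde A_\alpha$ in $H^I_n$, the four spectral-radius bounds follow exactly as in Theorem \ref{th5z} and Theorem \ref{th6z}. The convergence estimates $\rho(H^{\tilde A_\alpha})<1$ for the Jacobi, FGS, BGS and SGS iteration matrices are immediate from Theorem \ref{le2}, which guarantees convergence of all these schemes for any invertible $H-$matrix. The comparison inequalities $\rho(H^{\tilde A_\alpha})\le\max\{\rho(H^{\mu(A(\alpha))}),\rho(H^{\mu(A/\alpha))})\}$ follow by applying the comparison result (Theorem 4.1 in \cite{cyzhang01}) to the block-triangular form (\ref{r2a-repeat}): the iteration matrix of a block-triangular matrix decomposes so that its spectral radius is controlled by the spectral radii associated with the diagonal blocks $A(\alpha)$ and $A/\alpha$, and passing to comparison matrices only increases these radii.

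The main obstacle is the Schur-complement heredity step, namely verifying that $A/\alpha$ is an invertible $H-$matrix given that $A$ is a nonsingular general $H-$matrix and both diagonal blocks are invertible $H-$matrices. In the irreducible setting of Theorem \ref{th6z} this was handled by the strong structural result Theorem 5.9 in \cite{cyzhang01}; in the reducible case I must instead invoke the general structure-heredity theorem for Schur complements of $H-$matrices together with the explicit hypothesis on $A(\alpha')$. Everything after that is routine and parallels the earlier proofs, so I would simply write that the proof is analogous to that of Theorem \ref{th6z}, replacing the appeal to irreducibility by the given hypothesis that $A(\alpha)$ and $A(\alpha')$ are invertible $H-$matrices.
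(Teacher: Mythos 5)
Your proposal matches the paper's own proof essentially step for step: the paper likewise writes $\mathscr{P}_\alpha A$ in the block upper-triangular form (\ref{gauss1}), invokes the Schur-complement heredity results of \cite{cyzhang01} (there cited as Theorems 5.2 and 5.11) together with the hypothesis that $A(\alpha)$ and $A(\alpha')$ are invertible $H$-matrices to conclude $A/\alpha\in H^I_n$ and hence $\tilde A_\alpha\in H^I_n$, and then obtains the four spectral-radius bounds from Theorem 4.1 of \cite{cyzhang01} and Theorem \ref{le2}. The argument is correct and takes the same route.
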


\begin{proof}
It is obvious that here exists a permutation matrix $P_{\alpha}$
such that (\ref{r1a}) holds. Further,
\begin{equation}\label{gauss1}
\mathscr{P}_{\alpha}A=P_{\alpha}^TMP_{\alpha}A=P^T(MP_{\alpha}AP_{\alpha}^T)P=P^T\left[
 \begin{array}{ccccc}
 \ A(\alpha) & \ A(\alpha,\alpha')  \\
 \ 0 & \ A/\alpha
 \end{array}
 \right]P.
\end{equation} Since $A\in \mathscr{H}_n$, $A\in H^I_n\cup H^M_n$ is
nonsingular. Again, $A(\alpha)$ and $A(\alpha')$ are both invertible
$H-$matrices, it follows from Theorem 5.2 and Theorem 5.11 in
\cite{cyzhang01} that $A/\alpha$ is an invertible $H-$matrix.
Therefore, $\tilde{A}_{\alpha}=\mathscr{P}_{\alpha}A\in H^I_n$
coming from (\ref{gauss1}). Following, Theorem 4.1 in
\cite{cyzhang01} and Theorem \ref{le2}
 yield that the four conclusions hold, which completes the proof.
\end{proof}

It is noted that the preconditioner $\mathscr{P}_{\alpha}$ has at
least two shortcomings when the coefficient matrix
$A\in\mathscr{H}_n$ is reducible. One is choice of $\alpha$. For a
large scale reducible matrix $A\in\mathscr{H}_n\cap H_n^M$, we are
not easy to choose $\alpha$ such that $A(\alpha)$ and $A(\alpha')$
are both invertible $H-$matrices. The other is the computation of
$[A(\alpha)]^{-1}$. Although $A(\alpha)$ is an invertible
$H-$matrices, it is difficult to obtain its inverse matrix for large
$A(\alpha)$. These shortcomings above are our further research
topics.

\section{Numerical examples} \label{examples-sec}
In this section some examples are given to illustrate the results
obtained in Section 4 and Section 5.

\begin{example} {\rm Let the coefficient matrix $A$ of linear system
(\ref{r1}) be given by the following $n\times n$ matrix}
\end{example}
\begin{equation}\label{bs1}
 \ A_n=\left[
 \begin{array}{cccccccc}
 \ 1  & -1 & 0 & 0 & \cdots & 0 & 0 & 0 \\
 \ 1  & 2 & -1 & 0 & \cdots & 0 & 0 & 0\\
 \ 0 & 1 & 2 & -1 & \cdots & 0 & 0 & 0\\
 \ 0 & 0 & 1 &  2 &\ddots & 0 & 0 & 0\\
\ \vdots & \vdots & \vdots &  \ddots & \ddots & \ddots & \vdots & \vdots\\
\ 0 & 0 & 0 &  0 &\ddots & 2 & -1 & 0\\
\ 0  & 0 & 0 & 0 & \cdots & 1 & 2 & -1 \\
 \ 0  & 0 & 0 & 0 & \cdots & 0 & 1 & 1
 \end{array}
 \right].
 \end{equation}

It is easy to see that $A_n\in DE_n\subset H_n$ is irreducible and $A_n\notin
 H^I_n$, but Lemma 4.3 in \cite{Z.X19} shows that $A_n$ is nonsingular. Thus, $A_n\in \mathscr{H}_n$ is irreducible. Since $$D_n^{-1}A_nD_n=|D_{A_n}|-|L_{A_n}|-e^{i\pi}|U_{A_n}|,$$ where $$D_n=diag[1,-1,\cdots,(-1)^{k-1},\cdots,(-1)^{n-1}],$$ it follows from Theorem 3.6 that $A_n\in {\mathscr{U}}^{\pi}_n$. In addition, it is obvious that $A_n\in {\mathscr{L}}^{\pi}_n$. Therefore, Theorem 4.9 and Theorem 4.10 show that $$\rho(H_{FGS}(A_{100}))=\rho(H_{BGS}(A_{100}))=1.$$ Futher, Theorem 4.16 shows that $\rho(H_{SGS}(A_{100}))<1.$  In fact, direct computations also get $\rho(H_{FGS}(A_{100}))=\rho(H_{BGS}(A_{100}))=1$ and $\rho(H_{SGS}(A_{100}))=0.3497<1$ which demonstrates that the conclusions of Theorem 4.9, Theorem 4.10 and Theorem 4.16 in Section 4 are correct and effective.

 The discussion above shows that FGS and BGS iterative schemes fail to converge to
the unique solution of linear system (\ref{r1}) with the coefficient matrix (\ref{bs1}) of  for any choice of the initial
guess $x^{(0)}$, but SGS iterative schemes does. Now we consider preconditioned Gauss-Seidel iterative methods for linear system (\ref{r1}) with the coefficient matrix (\ref{bs1}).

Choose two set $\alpha=\{1\}\in \langle n\rangle$ and $\beta=\{1,n\}\in \langle n\rangle$ and partition $A_n$ into
\begin{equation}\label{example1}
A_n=\left[
 \begin{array}{ccccc}
 \ 1 & \ -a^T  \\
 \ a & \ A_{n-1}
 \end{array}
 \right]=\left[
 \begin{array}{ccccc}
 \ 1 & \ -b^T & 0 \\
 \ b & \ A_{n-2} & -c^T\\
 \ 0 & \ c       &  1
 \end{array}
 \right],
\end{equation} where $a=(1,0,\cdots,0)^T\in \mathbb{R}^{n-1},$ $b=(1,0,\cdots,0)^T\in \mathbb{R}^{n-2}$, $c^T=(0,\cdots,0,1)^T\in \mathbb{R}^{n-2}$ and $A_{n-2}=tri[1,2,-1]\in \mathbb{R}^{(n-2)\times(n-2)},$
we get two preconditioners
\begin{equation}\label{example2}
\mathscr{P}_1=\left[
 \begin{array}{ccccc}
 \ 1 & \ 0  \\
 \ -a & \ I_{n-1}
 \end{array}
 \right]\ \ and \ \ \mathscr{P}_{\beta}=\left[
 \begin{array}{ccccc}
 \ 1 & \ 0 & 0 \\
 \ -b & \ I_{n-2} & c^T\\
 \ 0 & \ c       &  1
 \end{array}
 \right],
\end{equation} where $I_{n-1}$ is the $(n-1)\times(n-1)$ identity matrix.
Then Theorem 5.9 in \cite{cyzhang01} shows that $\tilde{A}_1=\mathscr{P}_1A_n=\left[
 \begin{array}{ccccc}
 \ 1 & \ -a^T  \\
 \ 0 & \ A_{n}/\alpha
 \end{array}
 \right]$ and $\tilde{A}_{\beta}=\mathscr{P}_{\beta}A_n=\left[
 \begin{array}{ccccc}
 \ 1 & \ 0 & 0 \\
 \ 0 & \ A_{n}/\beta & 0\\
 \ 0 & \ c       &  1
 \end{array}
 \right]$ are both invertible $H-$matrices. According to Theorem 5.1 and Theorem 5.2, for these two preconditioners, the preconditioned
FGS, BGS and SGS iterative schemes converge to
the unique solution of {\rm(\ref{r1})} for any choice of the initial
guess $x^{(0)}$.

In fact, by direct computations, Table 6.1 in the following is obtained to show that $\rho({H}^{\tilde{A}_1}_{FGS})=\rho[{H}^{\mu(\tilde{A}_1)}_{FGS}]=0.9970<1$,
$\rho({H}^{\tilde{A}_{\beta}}_{BGS})=\rho[{H}^{\mu(\tilde{A}_{\beta})}_{BGS}]=0.9970<1$ and $\rho({H}^{\tilde{A}_1}_{SGS})=0.3333<\rho[{H}^{\mu(\tilde{A}_1)}_{SGS}]=0.9950<1$, $\rho({H}^{\tilde{A}_{\beta}}_{SGS})=0.3158<\rho[{H}^{\mu(\tilde{A}_{\beta})}_{SGS}]=0.9979<1$,
which illustrate specifically that Theorem 5.1 and Theorem 5.2 are both valid.

\begin{center}{\hbox{Table 6.1\ \ The comparison result of spectral radii of PGS iterative matrices}}  { {
\begin{tabular}{c|c|c|c|c}
$X$& $\rho({H}^{\tilde{A}_1}_{X})$& $\rho[{H}^{\mu(\tilde{A}_1)}_{X}]$ & $\rho({H}^{\tilde{A}_{\beta}}_{X})$ &
$\rho[{H}^{\mu(\tilde{A}_{\beta})}_{X}]$\\
\hline $FGS$ & $0.9970$ & $0.9970$ & $0.9900$&
$0.9900$  \\
\hline{$BFS$} &$0.9970$&$0.9970$&$0.9900$&$0.9900$ \\
\hline $SGS$ & $0.3333$&$0.9950$&$0.3158$& $0.9979$
\end{tabular}} }
\end{center}

\begin{example} {\rm Let the coefficient matrix $A$ of linear system
(\ref{r1}) be given by the following $6\times 6$ matrix}
\end{example}
\begin{equation}\label{bs2}
 \ A=\left[
 \begin{array}{cccccccc}
 \ 5  & -1 & 1 & 1 & 1 & -1 \\
 \ 1  & 5 & -1 & 1 & 1 & 1 \\
 \ 1 & 1 & 5 & -1 & 1 & 1  \\
 \ 0 & 0 & 0 &  2 & -1 & 1 \\
\ 0 & 0 & 0 &  1 & 2 & -1 \\
\ 0 & 0 & 0 &  1 & 1 & 2
 \end{array}
 \right].
 \end{equation}

Although $A\in DE_6$ are reducible but there is not any principal submatrix $A_k$ ($k<6$) in $A$ such that $D_{A_k}^{-1}A_k\in {\mathscr{R}}^{0}_k$, Theorem 3.16 in \cite{cyzhang01} shows that $A$ is nonsingular. Thus, $A_n\in \mathscr{H}_6$ is reducible. Furthermore, there is not any principal submatrix $A_k$ in $A$ such that $D_{A_k}^{-1}A_k\in {\mathscr{U}}^{\psi}_k$ and $D_{A_k}^{-1}A\in {\mathscr{L}}^{\phi}_k$. It follows from Theorem 4.20, Theorem 4.21 and Theorem 4.22 that FGS, BGS and SGS iterative schemes converge to the unique solution of {\rm(\ref{r1})} for any choice of the initial guess $x^{(0)}$.

From the first column in Table 6.2, one has $\rho({H}_{FGS})=\rho({H}_{BGS})=0.3536<1$ and $\rho({H}_{SGS})=0.2500<1$. This naturally verifies the results of Theorem 4.20, Theorem 4.21 and Theorem 4.22.

\begin{center}{\hbox{Table 6.2\ \ The comparison result of spectral radii of GS and PGS iterative matrices }}  { {
\begin{tabular}{c|c|c|c}
$X$& $\rho({H}_{X})$ & $\rho({H}^{\tilde{A}_{\alpha}}_{X})$ &
$\rho[{H}^{\mu(\tilde{A}_{\alpha})}_{X}]$\\
\hline $FGS$ & $0.3536$ & $0.6000$ &
$0.6000$  \\
\hline{$BFS$} &$0.3536$&$0.6000$&$0.6000$ \\
\hline $SGS$ & $0.2500$&$0.6000$&$0.6000$
\end{tabular}} }
\end{center}

Now, we consider convergence on preconditioned Gauss-Seidel iterative methods. Set $\alpha=\{3,4\}\subset\langle6\rangle=\{1,2,3,4,5,6\}$, and set $\beta=\{3,4\}\subset\langle6\rangle$ and $\gamma=\{3,4\}\subset\langle6\rangle$. Since $A(\beta\cup\gamma)\in H^I_4$, it follows from Theorem 4.3 in \cite{Z.X22} that $A/\alpha\in H^I_4$. Thus, we choose a preconditioner
 \begin{equation}\label{example3}
 \mathscr{P}_{\alpha}=\left[
 \begin{array}{ccccc}
 \ I_2 & \ -A(\beta,\alpha)[A(\alpha)]^{-1} & 0 \\
 \ 0 & \ I_{2} & c^T\\
 \ 0 & \ -A(\gamma,\alpha)[A(\alpha)]^{-1}    &  I_2
 \end{array}
 \right]
\end{equation}
such that $\tilde{A}_{\alpha}=\mathscr{P}_{\alpha}A\in H^I_6$. From Theorem 5.3, it is obvious to see that the preconditioned
FGS, BGS and SGS iterative schemes (\ref{r5}) converge to
the unique solution of {\rm(\ref{r1})} for any choice of the initial
guess $x^{(0)}$.

As is shown in Table 6.2, $\rho({H}^{\tilde{A}_{\alpha}}_{FGS})=\rho[{H}^{\mu(\tilde{A}_{\alpha})}_{FGS}]=0.6000<1$, $\rho({H}^{\tilde{A}_{\alpha}}_{BGS})=\rho[{H}^{\mu(\tilde{A}_{\alpha})}_{BGS}]=0.6000<1$ and $\rho({H}^{\tilde{A}_{\alpha}}_{SGS})=\rho[{H}^{\mu(\tilde{A}_{\alpha})}_{SGS}]=0.6000<1$, which directly verifies the results of Theorem 5.3.


\section{Conclusions} \label{conclusions-sec}

This paper studies convergence on Gauss-Seidel iterative methods for
nonstrictly diagonally dominant matrices and general $H-$matrices.
The definitions of some special matrices are firstly proposed to
establish some new results on convergence of Gauss-Seidel iterative
methods for nonstrictly diagonally dominant matrices and general
$H-$matrices. Following, convergence of Gauss-Seidel iterative
methods for preconditioned linear systems with general $H-$matrices
is established. Finally, some numerical examples are given to
demonstrate the results obtained in this paper.



\bigskip
{\bf Acknowledgment.} The authors would like to thank the anonymous
referees for their valuable comments and suggestions, which actually
stimulated this work.


\end{document}